\documentclass[11pt]{article}
\date{}
\usepackage[utf8]{inputenc} 
\usepackage[T1]{fontenc}    
\usepackage[colorlinks=true,linkcolor=blue,allcolors=blue]{hyperref}       
\usepackage{url}            
\usepackage{booktabs}       
\usepackage{amsfonts}       
\usepackage{nicefrac}       
\usepackage{microtype,nicefrac}      
\usepackage{xspace}
\usepackage{natbib}
\usepackage[toc]{appendix}
\usepackage{minitoc}
\usepackage{subcaption}
\usepackage{physics}
\usepackage{comment}
\usepackage{amsmath,amsthm}
\usepackage{etoolbox}

\usepackage{etoolbox}

\AtBeginEnvironment{lemma}{\vspace{6pt}}
\AtBeginEnvironment{definition}{\vspace{6pt}}
\AtBeginEnvironment{theorem}{\vspace{6pt}}
\makeatother
\usepackage{graphicx}
\usepackage{rotating}

\usepackage{amssymb}
\usepackage{autobreak}
\usepackage{mathtools}
\usepackage[dvipsnames]{xcolor}
\usepackage{bbm}
\usepackage[ruled,vlined, linesnumbered]{algorithm2e}
\usepackage{algorithmic}
\usepackage[parfill]{parskip}

\usepackage[toc]{appendix}
\usepackage{minitoc}

\makeatletter

\newcommand{\cc}{\mathcal{C}}

\newcommand{\cz}{\mathcal{Z}}

\makeatletter
\def\mathcolor#1#{\@mathcolor{#1}}
\def\@mathcolor#1#2#3{%
  \protect\leavevmode
  \begingroup
    \color#1{#2}#3%
  \endgroup
}
\makeatother

\newcommand*{\vsepfbox}[1]{%
  \begingroup
    \sbox0{\fbox{#1}}%
    \setlength{\fboxrule}{0pt}%
    \mbox{\kern-\fboxsep\fbox{\unhbox0}\kern-\fboxsep}%
  \endgroup
}

\newtheorem{theorem}{Theorem}[section]
\numberwithin{theorem}{section}
\newtheorem{corollary}[theorem]{Corollary}

\newtheorem{lemma}[theorem]{Lemma}

\newtheorem{definition}[theorem]{Definition}

\newtheorem{example}[theorem]{Example}

\makeatletter
\newcommand{\subalign}[1]{%
  \vcenter{%
    \Let@ \restore@math@cr \default@tag
    \baselineskip\fontdimen10 \scriptfont\tw@
    \advance\baselineskip\fontdimen12 \scriptfont\tw@
    \lineskip\thr@@\fontdimen8 \scriptfont\thr@@
    \lineskiplimit\lineskip
    \ialign{\hfil$\m@th\scriptstyle##$&$\m@th\scriptstyle{}##$\hfil\crcr
      #1\crcr
    }%
  }%
}
\makeatother

\newcommand{\R}{\mathbb{R}}

\newcommand{\ang}[1]{\left\langle #1\right\rangle}

\title{Mirror-Free Proximal Methods}

\author{%
Abhijeet Vyas\\
  Purdue University\\
  \texttt{vyas26@purdue.edu} 
\and
Brian Bullins\\
  Purdue University\\
  \texttt{bbullins@purdue.edu} \\
}

\begin{document}


\maketitle

\begin{abstract}
  We present a \emph{mirror-free} mirror prox (MFMP) algorithm, which extends the classic approach of Nemirovski (2004) to allow for proximal-like updates without the explicit need for a mirror map. We further analyze the convergence of our method under suitable notions of relative smoothness and relative Lipschitzness, for which we introduce a relaxation of the standard Bregman divergence in terms of more general potential operators. Finally, we show how a strongly monotone variant of our method allows us to solve regularized Taylor-expansion subproblems that appear in both second- and third-order smooth min-max optimization.

\end{abstract}

\section{Introduction}
The framework of variational inequalities (VIs) serves as a versatile tool for expressing and analyzing problems arising in fields as varied as network flow optimization \citep{smith1979existence}, economic equilibrium modeling \citep{dafermos1984supply}, and adversarial machine learning \citep{madry2018towards}. Efficiently solving VIs, especially in non-smooth or non-monotone settings, remains a significant challenge in optimization.

In many practical applications, exact solutions may be unattainable, making it necessary to seek approximate solutions. For instance, one may aim to find, for an operator $F$, an approximate solution $z^*$ to the VI objective such that, for $\epsilon > 0$,
\begin{align}\label{eq:VIobjapprox}
    \ang{F(z), z^* - z} \leq \epsilon \quad \forall z \in \mathcal{Z}.
\end{align}
A notable special case of the VI framework is min-max optimization, which models saddle-point problems involving two agents with competing objectives. This formulation has widespread applications in areas such as game theory and bilevel optimization, among others.
Min-max optimization problems, described by the following objective:
\begin{equation}\label{eq:minmax}
\min_{x \in \mathcal{X}} \max_{y \in \mathcal{Y}} f(x,y),
\end{equation}
where $f: \mathcal{X} \times \mathcal{Y} \to \R$ is defined over convex sets $\mathcal{X} \subseteq \R^m$ and $\mathcal{Y} \subseteq \R^n$, are closely related to VIs. For the min-max setting, the operator $F$ is defined as $F(x, y) = (\nabla_x f(x, y), -\nabla_y f(x, y))$, and the VI formulation \eqref{eq:VIobjapprox} can capture the saddle-point problem.

Proximal methods, such as mirror descent \citep{nemirovski1983problem} and its generalizations like mirror prox \citep{nemirovski2004prox}, have been successful in solving VIs under various conditions. Mirror descent obtains convergence guarantees under the strict MVI condition \citep{mertikopoulos2018optimistic}, while mirror prox achieves $O(K^{-1})$ rates for objectives like operator norm and VI objective in monotone settings \citep{diakonikolas2021efficient}. Recent advances \citep{adil2022optimal, lin2022perseus, vyas2023beyond, chen2025solving} have extended these results to higher-order methods and structured non-monotone settings. However, central to these algorithms is \emph{their dependence on a particular mirror map}. 

We therefore address this critical gap in our work, whose key contributions are as follows:\vspace{-0.2cm}
\begin{itemize}
    \item We develop and analyze our novel mirror-free mirror prox (MFMP) and MFMP strongly monotone (MFMP-SM) algorithms.\vspace{-0.2cm}
    \item In establishing and analyzing our techniques, we introduce several new notions, including operator-based generalizations of smoothness and Bregman divergence, which may be of independent interest.\vspace{-0.2cm}
    \item We additionally demonstrate the broader applicability of our algorithms for effectively computing second- and third-order updates that arise in min-max optimization algorithms by showing how they satisfy appropriate operator-based relative smoothness and relative strong monotonicity conditions.\vspace{-0.2cm}
\end{itemize}

\subsection{Applications}

Smoothness (of various orders) is a key property that can ensure convergence of suitable optimization methods. While in convex optimization smoothness is traditionally defined with respect to a particular norm, recent works \citep{birnbaum2011distributed, lu2018relatively, hanzely2021accelerated} have explored a notion of relative smoothness with respect to a more general mirror map. In the context of min-max optimization,~\cite{cohen2020relative} showed that relative smoothness of a function $f$ with respect to a mirror map $\phi$ implies relative Lipschitzness of the operator $F$ with respect to $\phi$.

In this work, we extend the concept of relative smoothness and relative Lipschitzness \citep{cohen2020relative} to operators, and these generalizations allows us to analyze MFMP, which does not rely on an explicit mirror map. MFMP achieves sub-linear convergence rates for solutions to \eqref{eq:VIobjapprox} under assumptions of relative smoothness or relative Lipschitzness, thus allowing us to tackle a larger class of problems compared to those that satisfied previously considered conditions. 

Challenges in this more general case include an additional term that involves the closed-loop integral of the operator $F$. To address this, we draw inspiration from the \emph{Helmholtz decomposition} \citep{stokes1849dynamical,helmholtz1858integrale}, which allows us to break down an operator into its conservative and non-conservative components. This leads to the concept of \emph{co-conservative operators}, defined as operators that share their non-conservative components. As a result, our analysis of MFMP hinges on both the relative smoothness and co-conservative relationships between the objective operator $F$ and the mirror operator $H$. Beyond solving problems that satisfy generalized smoothness and relative Lipschitzness notions, our algorithm also allows us to solve the sub-problems associated with various high-order algorithms as discussed below. 

Higher-order derivative information about a function can be used to accelerate optimization of higher-order smooth functions in both minimization \cite{nesterov2006cubic} and min-max optimization \citep{adil2022optimal,lin2022continuous,vyas2023beyond}. However, the updates for $p^{th}$ order methods beyond $p=2$ are non-trivial to compute. For minimization tasks,~\cite{nesterov2021implementable} used the properties of convex functions to reduce finding the third-order update to solving a small number of more manageable subproblems. In the case of min-max optimization, we show how a \emph{strongly monotone} version of our MFMP method, MFMP-SM, can similarly facilitate a more effective means of approximately solving the appropriate third-order subproblems~\citep{adil2022optimal}.

Overall, this paper generalizes the notions of relative Lipschitzness, relative smoothness, and the Bregmann divergence to their operator variants, thereby relieving the reliance of their definitions on the mirror maps. Equipped with these notions we develop the MFMP algorithm which allows us to approximately solve the VI objective in Eq.~\eqref{eq:VIobjapprox} for monotone operators and achieves a sub-linear rate of the error in \eqref{eq:VIobjapprox} under the assumptions of either operator relative smoothness or operator relative Lipschitzness. Under the relative smoothness and strong monotonicity assumptions, the strongly monotone variant of the algorithm, MFMP-SM, may achieve a linear rate on the approximation error in Eq.~\eqref{eq:VIobjapprox}.

\subsection{Related Work}
Methods based on general mirror maps such as mirror descent \citep{nemirovski1983problem, beck2003mirror} and mirror prox \citep{nemirovski2004prox, tseng2008accelerated} have proven to be immensely useful tools in optimization, with applications in online learning \citep{hazan2016introduction} and information geometry \citep{raskutti2015information, amari2016information}, as well as for classic problems such as maximum flow \citep{christiano2011electrical, sherman2017area}, optimal transport \citep{jambulapati2019direct, lin2022efficiency}, and matrix scaling \citep{allen2017much, cohen2017matrix}. Accordingly, it is common in many such instances to assume that the mirror map is strongly convex \emph{with respect to a norm}, and it is this same norm that is used to define---in the case of mirror descent---smoothness of the function, or---in the case of mirror prox---Lipschitzness of the operator. In this way, the specified norm acts as an ``intermediary'' of sorts between the mirror map and the function/operator.

In an effort to bypass this norm-dependent relation, several works have considered more general \emph{relative} notions of smoothness \citep{birnbaum2011distributed, lu2018relatively,hanzely2021accelerated}, which have proven useful in such contexts as expectation maximization (EM) \citep{aubin2022mirror} and high-order tensor method implementations \citep{nesterov2021implementable}. These ideas have since been generalized to notions of relative Lipschitzness in the variational inequalities setting \citep{cohen2020relative}, whereby the latter establishes additional connections with the techniques of area-convex regularization \citep{sherman2017area, jambulapati2023revisiting}.

Additionally, there have been efforts to dispense with the mirror maps altogether. Notable in this regard is the work by \cite{gunasekar2021mirrorless} on a \emph{mirrorless} variant of mirror descent, whereby the authors consider a more general Riemannian gradient flow as the infinitesimal limit of mirror descent \citep{raskutti2015information}, where the metric tensor is taken to be the Hessian of the mirror map. As a natural consequence, this perspective allows for the consideration of flows over manifolds with more general metric tensors, that is, which may \emph{not} be the Hessian of any function, in which case there is no ``mapping'' to connect the primal and dual spaces.

\section{Preliminaries}\label{sec:prelims}
We begin by establishing key notation, definitions and supporting lemmas that will be used throughout the paper.

\begin{definition}[Bregman Divergence]
We let $\omega_\phi(z_b,z_a)$ denote the Bregman divergence with respect to a convex function $\phi$, which is defined as
\begin{equation*}
\omega_\phi(z_b,z_a) =  \phi(z_b)-\phi(z_a)-\langle \nabla \phi(z_a),z_b-z_a\rangle,
\end{equation*} where $z_a,z_b\in \mathcal{Z}$.
\end{definition}
For differentiable $\phi$, $\phi(z_b)-\phi(z_a)$ (and thus $\omega_\phi$) can be represented by an integral of the gradient of $\phi$ over any path from $z_a$ to $z_b$ (\cite{azoury2001relative}). Rather than depending on a mirror map $\phi$, the generalized Bregman divergence is defined with respect to an operator $H$. This generalization allows us to handle cases where the operator $H$ is not the gradient of any mirror map. To accommodate this difference, the GBD includes a line integral of the operator $H$, which we define as follows.
\begin{definition}[Line integral of an operator]\label{def:lintegral}
The line integral of an operator $H$ over a curve $\cc$ is as follows:
    $$\int_{\cc} H(r)^\top dr = \int_a^b \ang{H(r(t)), \dv{r(t)}{t}} dt$$
where $r : [a,b] \rightarrow \cc$ is a one-to-one and onto parameterization of the curve $\cc$ which lies in $\mathcal{Z}$ such that $r(a),~r(b)\in \mathcal{Z}$ are the starting and ending points of the curve $\cc$.  $\dv{(r(t))}{t}$ is the element wise derivative of $r(t)$ with respect to $t$. 
\end{definition}
In this work we assume that the path $P_{z_az_b}$ is a straight line from $z_a$ to $z_b$ in the Euclidean space and represent the integral of an operator $H$ over the same as $\int_{P_{z_az_b}} H(r)^\top dr = \int_{z_a}^{z_b} H(r)^\top dr$ which is defined below.
Note that over a straight line starting from $z_a$ to $z_b$, using Definition \ref{def:lintegral} the line integral of an operator can be written as
\begin{align*}
    \int_{z_a}^{z_b} H(r)^\top \, dr 
    &= \int_0^1 \ang{H(z_a + t(z_b - z_a)), z_b - z_a} \, dt~ \forall z_a, z_b \in \cz
\end{align*}
When the path $\cc$ is closed, the line integral is denoted by $\oint$. Throughout the paper we will use $P_{z_az_bz_c}$ to denote a closed path consisting of straight lines through the points $z_a,z_b,z_c\in\mathcal{Z}$, in that order, and the line integral of an operator $H$ over the path as $\oint_{P_{z_az_bz_c}} H(r)^\top dr$.
We now define the \emph{generalized} Bregman divergence.
\begin{definition}[Generalized Bregman Divergence]\label{def:gbd}
For any two points $z_a,z_b\in \mathcal{Z}$, we define the \emph{generalized Bregman divergence (GBD)} with respect to an operator $H$ as follows
\begin{equation*}
\omega_H(z_b,z_a) =  \int_{z_a}^{z_b} H(r)^\top dr- \langle  H(z_a),z_b-z_a\rangle.
\end{equation*}
\end{definition}
We prove an analogous Bregman three-point property for this generalized divergence. 
\begin{lemma}[Three point property]\label{lem:3-point}
    For any three points $z_a,z_b, z_c \in \mathcal{Z}$, the GBD with respect to $H$ satisfies
\begin{align*}
    \omega_H(z_a, z_c) + \omega_H(z_c, z_b) 
    &= \oint_{P_{z_a z_b z_c}} H(r)^\top \, dr + \omega_H(z_a, z_b)+ \ang{H(z_b) - H(z_c), z_a - z_c}.
\end{align*}
\end{lemma}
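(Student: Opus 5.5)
The plan is to expand every term directly from Definition~\ref{def:gbd} and the straight-line convention for line integrals. I will then use two facts: the closed path $P_{z_az_bz_c}$ splits into its three straight segments, and a straight-line integral changes sign when its endpoints are swapped. Only the integral terms and the inner-product terms of the two sides then need to be compared.

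\textbf{Step 1 (left-hand side).} By Definition~\ref{def:gbd},
\begin{align*}
\omega_H(z_a,z_c)+\omega_H(z_c,z_b) = \int_{z_c}^{z_a} H(r)^\top dr + \int_{z_b}^{z_c} H(r)^\top dr - \ang{H(z_c), z_a-z_c} - \ang{H(z_b), z_c-z_b}.
\end{align*}

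\textbf{Step 2 (right-hand side).} By definition of $P_{z_az_bz_c}$, the closed integral is $\int_{z_a}^{z_b}+\int_{z_b}^{z_c}+\int_{z_c}^{z_a}$ of $H(r)^\top dr$. Also $\omega_H(z_a,z_b)=\int_{z_b}^{z_a}H(r)^\top dr-\ang{H(z_b),z_a-z_b}$.

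The parameterization $t\mapsto z_b+t(z_a-z_b)$ is the reverse of $t\mapsto z_a+t(z_b-z_a)$. The substitution $t\mapsto 1-t$ in the defining formula therefore gives $\int_{z_b}^{z_a}H(r)^\top dr=-\int_{z_a}^{z_b}H(r)^\top dr$. Hence the $z_a\to z_b$ segment cancels, and the integral part of the right-hand side is $\int_{z_b}^{z_c}+\int_{z_c}^{z_a}$. This matches Step 1 exactly.

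\textbf{Step 3 (inner products).} It remains to check that
\begin{align*}
-\ang{H(z_b),z_a-z_b}+\ang{H(z_b)-H(z_c),z_a-z_c} = -\ang{H(z_c),z_a-z_c}-\ang{H(z_b),z_c-z_b}.
\end{align*}
Collecting the $H(z_b)$ terms on the left gives $-\ang{H(z_b),(z_a-z_b)-(z_a-z_c)}=-\ang{H(z_b),z_c-z_b}$. The $H(z_c)$ term is identical on both sides, so the identity holds.

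There is no real obstacle. The only point needing care is the orientation-reversal step, which is justified by the change of variables $t\mapsto 1-t$ in the straight-line formula for the integral.
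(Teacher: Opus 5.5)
Your proof is correct and follows essentially the same route as the paper: you expand each GBD via Definition~\ref{def:gbd}, reverse the orientation of the $z_b\to z_a$ segment to assemble the loop integral over $P_{z_az_bz_c}$, and regroup the inner products into $\ang{H(z_b)-H(z_c),z_a-z_c}$. Your explicit $t\mapsto 1-t$ justification of the orientation reversal and your sign bookkeeping are slightly cleaner than the paper's display, which writes the inner-product term coming from $-\omega_H(z_a,z_b)$ with the wrong sign (it should be $+\ang{H(z_b),z_a-z_b}$).
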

We define the monotonicity of operators and show that the GBD with respect to a monotone operator is non-negative. 
\begin{definition}[Monotonicity]\label{def:monotone}
An operator $F$ is monotone if for all $z_a,z_b \in \cz$,
\[\ang{F(z_b)-F(z_a),z_b-z_a} \geq 0.\]
\end{definition}

Equivalently, we have the following.
\begin{lemma}\label{lem:jacobmon}
A differentiable operator $F$ is monotone if and only if for all $h\in \cz$ its Jacobian satisfies
\begin{equation}\label{def:jacobismooth}
    h^\top \nabla F(z)h  \geq 0.
\end{equation}
\end{lemma}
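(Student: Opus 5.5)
We prove both directions by restricting $F$ to line segments and using the fundamental theorem of calculus along them. Throughout we take $\cz$ to be convex and open, or at least to have nonempty interior with $\nabla F$ continuous up to the boundary. This is the standard setting in which the Jacobian at a point is determined by the values of $F$ nearby. For fixed $z_a, z_b \in \cz$, set $h = z_b - z_a$ and define the scalar function $g(t) = \ang{F(z_a + t h), h}$ for $t \in [0,1]$. Since $F$ is differentiable, the chain rule gives $g'(t) = h^\top \nabla F(z_a + t h) h$, using the same Jacobian convention as in \eqref{def:jacobismooth}.

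\emph{Sufficiency.} Assume \eqref{def:jacobismooth} holds for all $z$ and $h$. Then $g'(t) \geq 0$ on $[0,1]$, so $g$ is nondecreasing and $g(1) \geq g(0)$. This inequality is exactly $\ang{F(z_b) - F(z_a), z_b - z_a} \geq 0$, so $F$ is monotone in the sense of Definition~\ref{def:monotone}. The argument uses only the mean value theorem, so it does not even need $g'$ to be integrable.

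\emph{Necessity.} Assume $F$ is monotone. Fix $z$ in the interior of $\cz$ and a direction $h$. For small $t > 0$ the point $z + t h$ lies in $\cz$, and monotonicity applied to the pair $(z + th, z)$ gives $\ang{F(z + t h) - F(z), t h} \geq 0$. Dividing by $t^2 > 0$ yields
\[
\Big\langle \tfrac{F(z+th) - F(z)}{t},\, h \Big\rangle \geq 0 .
\]
Letting $t \downarrow 0$, differentiability of $F$ turns the difference quotient into $\nabla F(z) h$, so $h^\top \nabla F(z) h \geq 0$. Positive scaling of $h$ does not change the sign of this quadratic form, so the inequality holds for every direction $h$. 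For boundary points of $\cz$ we extend the inequality by continuity of $\nabla F$, or by taking one-sided directional derivatives along feasible directions.

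\emph{Main obstacle.} The only delicate issue is the domain, since both the statement and the paper write ``$h \in \cz$''. In the necessity direction we must be able to move from $z$ in direction $h$ while staying in $\cz$. This is why we assume convexity with nonempty interior and then handle boundary points by a limiting argument. A second subtlety is that \eqref{def:jacobismooth} involves only the symmetric part of $\nabla F$. This causes no difficulty, because the quadratic form $h^\top \nabla F(z) h$ sees only that symmetric part, and the proof above never needs more.
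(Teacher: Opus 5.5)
Your proof is correct, and it differs from the paper's mainly in being self-contained. The paper does not argue from first principles. It cites a result of Rockafellar and Wets, phrased in terms of the symmetric part: a differentiable $F$ is monotone if and only if $\tfrac12(\nabla F(z)+\nabla F(z)^\top)$ is positive semidefinite at every $z$. It then notes that the skew-symmetric part contributes nothing to $h^\top \nabla F(z) h$, so the two conditions coincide. You instead reprove the cited fact directly. One direction uses the mean value theorem for $g(t)=\ang{F(z_a+th),h}$. The other takes the limit of $\ang{(F(z+th)-F(z))/t,\,h}\ge 0$ as $t\downarrow 0$. You work with the non-symmetric quadratic form throughout, so the symmetric/skew split never has to be invoked as a separate step. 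The paper's route is shorter and leaves the domain technicalities to the reference. Yours makes explicit the hypotheses the paper leaves implicit: convexity of $\cz$, so segments stay in $\cz$, and some regularity at boundary points.

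One small caution about the boundary. Your continuity-of-$\nabla F$ argument is fine. The alternative you mention, one-sided directional derivatives at a boundary point $z$ along feasible directions, is not enough as stated. It only gives $h^\top \nabla F(z) h \ge 0$ for $h$ in the feasible cone, which does not by itself yield all $h$. For instance, the form $h_1 h_2$ is nonnegative on the nonnegative quadrant but not everywhere. To make that alternative work, apply monotonicity to the pair $z+th'$, $z+th''$ with $h',h''$ feasible. Dividing by $t^2$ and letting $t\downarrow 0$ gives the inequality for $h''-h'$. Such differences exhaust $\R^d$ whenever the feasible cone has nonempty interior, which holds for convex $\cz$ with nonempty interior.
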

It follows that the GBD with respect to a monotone operator is non-negative.
\begin{lemma}\label{lem:posgbd}
    If the operator $F$ is monotone, we have for all $z_a,z_b\in \cz$,
        $$\omega_F(z_b,z_a)\geq 0.$$ 
\end{lemma}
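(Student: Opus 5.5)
The plan is to write the generalized Bregman divergence as a single one-dimensional integral along the straight segment from $z_a$ to $z_b$, and then apply monotonicity pointwise along that segment. By Definition~\ref{def:gbd} and the straight-line form of the line integral,
\begin{align*}
\omega_F(z_b,z_a) = \int_0^1 \ang{F(z_a + t(z_b-z_a)), z_b - z_a}\,dt - \ang{F(z_a), z_b - z_a}.
\end{align*}
Since $\int_0^1 dt = 1$, the second term equals $\int_0^1 \ang{F(z_a), z_b-z_a}\,dt$, so
\begin{align*}
\omega_F(z_b,z_a) = \int_0^1 \ang{F(z_a + t(z_b-z_a)) - F(z_a), z_b - z_a}\,dt.
\end{align*}

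Next, fix $t\in(0,1]$ and set $z_t = z_a + t(z_b - z_a)$. Then $z_b - z_a = (z_t - z_a)/t$, so the integrand equals $\frac{1}{t}\ang{F(z_t) - F(z_a), z_t - z_a}$. Applying Definition~\ref{def:monotone} to the pair $z_t, z_a$ shows this is nonnegative. Here I will use that $\cz$ is convex, as is implicit in the paper's use of straight-line paths, so that $z_t\in\cz$. At $t=0$ the integrand is simply $0$.

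The integrand is therefore nonnegative on $[0,1]$, and so is its integral, which gives $\omega_F(z_b,z_a)\ge 0$. The only points needing care are that $z_t$ stays in the domain and that the integral exists; the latter is already assumed whenever the GBD is defined, for instance when $F$ is continuous.

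An alternative route, for differentiable $F$, is to use Lemma~\ref{lem:jacobmon}. Writing $F(z_t)-F(z_a)=\int_0^t \nabla F(z_s)(z_b-z_a)\,ds$ turns the integrand into $\int_0^t (z_b-z_a)^\top \nabla F(z_s)(z_b-z_a)\,ds \ge 0$. I would use the first argument, since it requires no differentiability. There is no real obstacle beyond the rescaling by $1/t$.
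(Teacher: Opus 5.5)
Your proposal is correct and matches the paper's proof: the paper also rewrites $\omega_F(z_b,z_a)$ as $\int_0^1 \ang{F(z_a+t(z_b-z_a))-F(z_a), z_b-z_a}\,dt$ and gets a nonnegative integrand by applying monotonicity to the pair $z_a+t(z_b-z_a),\, z_a$ and dividing by $t>0$. Your remarks on convexity of $\cz$ and integrability of the integrand make explicit assumptions that the paper leaves implicit.
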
  
We now present key relations between operators $F$ and $H$ under which we analyze its convergence, beginning with a notion of \emph{operator} relative smoothness.  
\begin{definition}[Operator Relative Smoothness]
We define an operator $F$ to be $L$-relatively smooth with respect to another operator $H$ if for all $z_a, z_b \in \cz$, 
\begin{align}\label{def:smooth}
    L \ang{H(z_b) - H(z_a), z_b - z_a}
    &\geq \ang{F(z_b) - F(z_a), z_b - z_a}
\end{align}
\end{definition}
We note that our notion of \emph{operator} relative smoothness is inspired by the relative smoothness condition of~\cite{lu2018relatively}, whereby ours generalizes their notion by letting $F = \nabla f$ and $H = \nabla \phi$ be the gradients of relatively smooth functions $f$ and $\phi$, respectively. Next, letting $\nabla F$ denote the Jacobian of an operator $F$, we prove an equivalent definition in the following lemma.
\begin{lemma}[Jacobians of relatively smooth operators]\label{lem:jacobrels}
An operator $F$ is $L$-relatively smooth with respect to another operator $H$ if and only if their Jacobians satisfy for any $z,h\in \mathcal{Z}$,
\begin{equation}\label{def:jacobirelsmooth}
    L h^\top \nabla H(z) h \geq h^\top \nabla F(z) h.
\end{equation}

\end{lemma}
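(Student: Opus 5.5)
The plan is to reduce the statement to Lemma~\ref{lem:jacobmon} applied to the single operator $G := LH - F$. The relative smoothness inequality \eqref{def:smooth} can be rewritten as
\[
\ang{G(z_b)-G(z_a), z_b - z_a} \geq 0 \quad \forall z_a,z_b\in\cz ,
\]
which is exactly monotonicity of $G$ in the sense of Definition~\ref{def:monotone}. Since the Jacobian is linear, $\nabla G(z) = L\nabla H(z) - \nabla F(z)$, and so \eqref{def:jacobirelsmooth} says precisely that $h^\top \nabla G(z) h \geq 0$ for all $h$. Lemma~\ref{lem:jacobmon} then gives the equivalence immediately, assuming $F$ and $H$ (hence $G$) are differentiable.

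To keep the argument self-contained, I would also sketch the two directions directly.

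\emph{Relative smoothness $\Rightarrow$ Jacobian condition.} Fix $z$ and $h$, and set $z_b = z + th$, $z_a = z$ for small $t>0$. Dividing \eqref{def:smooth} by $t^2$ gives
\[
L\,\ang{\tfrac{H(z+th)-H(z)}{t}, h} \geq \ang{\tfrac{F(z+th)-F(z)}{t}, h}.
\]
Letting $t\to 0^+$ turns the difference quotients into $\nabla H(z)h$ and $\nabla F(z)h$. This requires $z+th\in\cz$ for small $t$, which holds for interior points or open $\cz$; at boundary points one can extend by continuity of the Jacobians.

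\emph{Jacobian condition $\Rightarrow$ relative smoothness.} Take $h = z_b - z_a$ and apply the fundamental theorem of calculus along the segment:
\[
G(z_b)-G(z_a) = \int_0^1 \nabla G\bigl(z_a + s(z_b-z_a)\bigr)(z_b - z_a)\,ds .
\]
Pairing with $h$ gives an integral of $h^\top \nabla G(\cdot) h \geq 0$, which is nonnegative. Convexity of $\cz$ keeps the segment inside the domain.

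The only delicate step is the limit in the first direction. It requires differentiability and, for the converse, integrability of the Jacobian along segments, which I would take as standing assumptions. Otherwise the proof is just the linearity observation that reduces everything to Lemma~\ref{lem:jacobmon}.
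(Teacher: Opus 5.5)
Your proposal is correct and follows the paper's proof: the paper also observes that $L$-relative smoothness is exactly monotonicity of $LH - F$ (which it calls $\Phi$), and concludes via the Jacobian characterization of monotonicity together with $\nabla(LH-F) = L\nabla H - \nabla F$. Citing Lemma~\ref{lem:jacobmon} is the intended step (the paper's proof mistakenly cites the lemma being proved), and your direct sketch of both directions is a harmless, self-contained addition.
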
\vspace{-12pt}
We now relate the GBDs between two relatively smooth operators.
\begin{lemma}\label{lem:gbdrelopsmooth}
If an operator $F$ is $L$-relatively smooth with respect to another operator $H$, then we have for all $z_a,z_b \in \cz$, 
\begin{equation}
    L \omega_H(z_b,z_a) \geq \omega_F(z_b,z_a).
\end{equation}
\end{lemma}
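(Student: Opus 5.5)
Since $\omega_F$ and $\omega_H$ are both defined through straight-line integrals, I will reduce the claim to a statement about a single operator: $G := LH - F$. Both the line integral and the inner-product term in Definition~\ref{def:gbd} are linear in the operator, so
\begin{equation*}
L\,\omega_H(z_b,z_a) - \omega_F(z_b,z_a) = \omega_{G}(z_b,z_a).
\end{equation*}
The statement is therefore equivalent to $\omega_G(z_b,z_a) \ge 0$ for all $z_a,z_b\in\cz$.

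Next I observe that relative smoothness \eqref{def:smooth} says exactly
\begin{equation*}
\ang{G(z_b)-G(z_a), z_b-z_a} \ge 0,
\end{equation*}
so $G$ is monotone in the sense of Definition~\ref{def:monotone}. Lemma~\ref{lem:posgbd} then gives $\omega_G(z_b,z_a)\ge 0$ directly, which finishes the proof.

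If I want to avoid relying on that lemma, I will verify the nonnegativity directly. Parametrize the segment as $r(t)=z_a+t(z_b-z_a)$. Then
\begin{equation*}
\omega_G(z_b,z_a) = \int_0^1 \ang{G(r(t))-G(z_a),\, z_b-z_a}\,dt .
\end{equation*}
For $t>0$ we have $z_b-z_a = (r(t)-z_a)/t$, so the integrand equals $\tfrac1t\ang{G(r(t))-G(z_a), r(t)-z_a}$, which is nonnegative by monotonicity of $G$. At $t=0$ the integrand is $0$.

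The main thing to check is the linearity step: the straight-line integral is linear in the operator, and no path-dependence or closed-loop term enters because both GBDs use the same segment. Beyond that the argument is routine. One small point is integrability near $t=0$, but the integrand is $\ang{G(r(t))-G(z_a), z_b-z_a}$, which is bounded for continuous $G$, so the $1/t$ rewriting is only pointwise and causes no trouble.
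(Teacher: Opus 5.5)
Your proof is correct and is essentially the paper's argument: the paper applies relative smoothness to the pair $z_a$, $z_a+t(z_b-z_a)$, divides out $t$, and integrates over $t\in[0,1]$, which is exactly your direct verification for $G=LH-F$ (and the content of Lemma~\ref{lem:posgbd}). Packaging it through linearity of $\omega$ in the operator is slightly cleaner and keeps the signs consistent, whereas the paper's displayed chain writes the pointwise inequality in the reversed direction and then compensates by switching $z_b-z_a$ to $z_a-z_b$.
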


\begin{lemma}\label{lem:relopsmooth}
    If a monotone operator $F$ is $L$-relatively smooth with respect to operator $H$, then we have for any three points $z_a, z_b, z_c \in \cz$, 
\begin{align*}
    L(\omega_H(z_b, z_c) + \omega_H(z_a, z_b)) 
    &\geq \oint_{P_{z_a z_b z_c}} F(r)^\top \, dr \nonumber+ \ang{F(z_c) - F(z_b), z_a - z_b} 
\end{align*}
\end{lemma}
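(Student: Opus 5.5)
The plan is to reduce the statement to the three point property (Lemma~\ref{lem:3-point}) applied to the operator $F$ rather than $H$, and then to use two facts from the preliminaries. Lemma~\ref{lem:gbdrelopsmooth} will upper bound the $F$-divergences by $L$ times the $H$-divergences, and Lemma~\ref{lem:posgbd} will discard the one leftover divergence, which is nonnegative since $F$ is monotone.

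\textbf{Step 1: an exact identity for $F$.} I would invoke Lemma~\ref{lem:3-point} with $H$ replaced by $F$ and the points relabeled so that the two divergences on the left are $\omega_F(z_a,z_b)$ and $\omega_F(z_b,z_c)$. Concretely, substitute $(z_a,z_b,z_c)\mapsto(z_a,z_c,z_b)$. Equivalently, expand the definitions directly:
\begin{align*}
\omega_F(z_a,z_b)+\omega_F(z_b,z_c)-\omega_F(z_a,z_c)
&= \Big(\int_{z_c}^{z_b}F(r)^\top dr+\int_{z_b}^{z_a}F(r)^\top dr-\int_{z_c}^{z_a}F(r)^\top dr\Big)\\
&\quad -\ang{F(z_b),z_a-z_b}-\ang{F(z_c),z_b-z_c}+\ang{F(z_c),z_a-z_c}.
\end{align*}
The three straight-line integrals combine into the closed line integral of $F$ around the triangle with vertices $z_a,z_b,z_c$. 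Since $-\int_{z_c}^{z_a} = \int_{z_a}^{z_c}$, the triangle is traversed $z_c\to z_b\to z_a\to z_c$, which is the orientation produced by Lemma~\ref{lem:3-point} under this relabeling.

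The inner-product terms simplify. Writing $z_b-z_c=(z_a-z_c)-(z_a-z_b)$, they collapse to $\ang{F(z_c)-F(z_b),z_a-z_b}$. This yields
\[
\omega_F(z_a,z_b)+\omega_F(z_b,z_c)=\oint F(r)^\top dr+\omega_F(z_a,z_c)+\ang{F(z_c)-F(z_b),z_a-z_b}.
\]

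\textbf{Step 2: compare with $H$ and drop the extra term.} By Lemma~\ref{lem:gbdrelopsmooth}, applied once to the pair $(z_a,z_b)$ and once to the pair $(z_b,z_c)$, the left-hand side is at most $L(\omega_H(z_a,z_b)+\omega_H(z_b,z_c))$. Since $F$ is monotone, Lemma~\ref{lem:posgbd} gives $\omega_F(z_a,z_c)\ge 0$, so this term can be removed from the right-hand side. Together these give exactly the claimed inequality.

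\textbf{Main obstacle.} The only delicate point is the bookkeeping in Step 1. The relabeling must produce the inner product with the right points, namely $F(z_c)-F(z_b)$ paired with $z_a-z_b$. The orientation of the closed path must also agree with the paper's convention for $\oint_{P_{z_a z_b z_c}}$, since reversing the path flips the sign of this term, which need not be zero for non-conservative $F$.

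To guard against a sign error, I would derive the identity directly from Definition~\ref{def:gbd}, as displayed above, rather than relying only on the relabeled Lemma~\ref{lem:3-point}. I would then check that the resulting loop is the one denoted $P_{z_a z_b z_c}$ in the statement. If the conventions disagree, the stated inequality holds with the loop traversed in the orientation given by the derivation.

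Everything else is a direct application of the two cited lemmas.
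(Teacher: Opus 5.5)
Your proposal is correct and is the paper's own argument, which appears at the start of the proof for Example~\ref{eg:eg2} (the appendix entry labeled as this lemma's proof instead reproduces the proof of Lemma~\ref{lem:support}): bound $\omega_F(z_a,z_b)+\omega_F(z_b,z_c)$ by $L(\omega_H(z_a,z_b)+\omega_H(z_b,z_c))$ via Lemma~\ref{lem:gbdrelopsmooth}, expand it with the three-point identity for $F$, and drop $\omega_F(z_a,z_c)\ge 0$ by Lemma~\ref{lem:posgbd}. Your orientation check does fail under the paper's stated convention that $P_{z_az_bz_c}$ runs $z_a\to z_b\to z_c\to z_a$: the loop you derive is $P_{z_az_cz_b}$, so the valid inequality carries $\oint_{P_{z_az_cz_b}}F(r)^\top dr=-\oint_{P_{z_az_bz_c}}F(r)^\top dr$ (the paper's derivation produces the same identity but mislabels the loop), and the statement as written is false for non-conservative $F$, e.g.\ $F(x,y)=(y,-x)$, $H(z)=z$, $L=1$, $z_a=(s,0)$, $z_b=(0,0)$, $z_c=(0,s)$ reduces it to $s^2\ge 2s^2$.
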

\begin{definition}[Operator Relative Lipschitzness]\label{def:reloplip}
    An operator $F$ is $L$-relatively Lipschitz with respect to operator $H$ if for any three points $z_a, z_b, z_c \in \mathcal{Z}$,
\begin{align*}
    L(\omega_H(z_b,z_c)+\omega_H(z_a,z_b)) \geq  \ang{F(z_c)-F(z_b),z_a-z_b}.
\end{align*} 
\end{definition}\vspace{-12pt}
Our notion of operator relative Lipschitzness was inspired by the relative Lipschitzness condition of \cite{cohen2020relative} whereby ours generalizes their notion with $H(z) = \nabla \phi(z)$, in which case the GBD, $\omega_H$ is equivalent to $\omega_{\phi}$.

We further note that in the convex setting if a function $f$ is $L$-relatively smooth with respect to a function $h$, the operator $F = \nabla f$ satisfies the relative Lipschitzness condition with respect to the Bregman divergence $\omega_h$ with mirror map $h$ \citep{cohen2020relative}. However this implication does not generalize for the min-max operator $F(x,y) = (\nabla_x f(x,y), -\nabla_y f(x,y))$ in the monotone setting. This is because the min-max operator is not the gradient of any function and we obtain an extra term $\oint_{P_{z_az_bz_c}}F^\top dr$ in the expression of Lemma \ref{lem:relopsmooth} (which assumes relative smoothness) which is absent in the definition \ref{def:reloplip} of relative Lipschitzness. Thus, for operators that are a gradient of a function (and therefore \textit{conservative}), this term would be zero for all closed paths $P_{z_az_bz_c}$ and the definitions of relative-smoothness and relative-Lipschitzness would coincide.

The fundamental theorem of vector calculus \citep{stokes1849dynamical,helmholtz1858integrale} states that any sufficiently continuous operator in three dimensions can be written as the sum of a conservative and non-conservative operator via the Helmholtz decomposition. The line-integral of any operator over a closed curve $\cc$ would then simply be the line-integral of its non-conservative part. In light of this discussion we introduce the notion of $\delta$ \emph{conservative} and $\delta$ \emph{co-conservative} operators.

\begin{definition}[Conservative Operators]\label{def:conservativeop}
The operator $H$ is a $\delta$ conservative operator if, for every curve $\cc$, $H$ satisfies $-\delta \leq \oint_{\cc} H (r)^\top dr \leq \delta.$
\end{definition}

\begin{definition}[Co-conservative operators]\label{def:coconservative}
Let $\Delta$ be the difference operator $\Delta(x,y) = E(x,y)-F(x,y)$ where $E$ and $F$ are operators maps from $\cz \rightarrow \mathbb{R}^d$. The operator $F$ is $\delta$ co-conservative with respect to $E$ if for every closed curve $\cc$,
$-\delta \leq \oint_{\cc} \Delta (r)^\top dr \leq \delta.$
\end{definition}

Note that Definition \ref{def:conservativeop} provides a bound on the non-conservative part of the operator $H$, while Definition \ref{def:coconservative} provides a bound on the non-conservative part of the operator $\Delta$, which would be 0 if the operators $F$ and $H$ shared the same non-conservative parts. Analogous to the relative smoothness of an operator $F$ defined with respect to an operator $H$, we can also define the notion of relative strong monotonicity of an operator $F$ relative to an operator $H$.

\begin{definition}[Operator relative strong-monotonicity]\label{def:stronglymono}
We define an operator $F$ to be $m$-relatively strongly monotone with respect to another operator $H$ if for all $z_a, z_b \in \cz$, 
\begin{align*}
    \ang{F(z_b) - F(z_a), z_b - z_a} 
    &\geq m \ang{H(z_b) - H(z_a), z_b - z_a}.
\end{align*}
\end{definition}\vspace{-12pt}
This notion generalizes the relative strong-monotonicity of an operator $F$ (with respect to a function $h$), as introduced by~\cite{cohen2020relative}, and, as we later discuss in Section~\ref{sec:algorithms} it allows us to achieve linear rate under operator relative smoothness. The proofs of the statements in this section are provided in Appendix \ref{app:sec2}.\vspace{-12pt}
\section{Mirror-Free Algorithms}\label{sec:algorithms}
In this section, we present our main algorithm, \emph{Mirror-Free Mirror Prox} (MFMP), along with its convergence analysis under various conditions. At a high level, MFMP can be viewed as a generalization of the mirror-prox method in which the proximal steps are taken with respect to a \emph{general mirror operator} rather than the gradient of a mirror map. This allows the method to operate without explicitly requiring a mirror function while retaining the key structural properties that enable mirror-prox--type analyses.

For the special case in which the operator also satisfies \emph{relative strong monotonicity}, we introduce a generalized variant called MFMP-SM. Both MFMP and MFMP-SM rely on a proximal operation defined with respect to a mirror operator $H$. Intuitively, this operation plays the same role as the standard mirror-prox update: it computes a step that balances progress in the direction of the operator with a geometry induced by $H$.

We first define this proximal operation, which will be used by both MFMP and MFMP-SM (an additional operation specific to MFMP-SM will be introduced later). The update replaces the gradient of the mirror map $\nabla h$ used in classical mirror-prox methods with a general mirror operator $H$, thereby extending the class of geometries under which the algorithm can be applied.
\begin{align*}
    \textrm{Prox}_H(z_a, z_b)= \{ z' : \ang{F(z_b) + L(H(z') - H(z_a)), z' - z} \leq 0 \quad \forall z \}.
\end{align*}
The MFMP algorithm performs two such proximal updates at each iteration: one using the mirror operator evaluated at $z_k$, and another evaluated at the intermediate point $z_{k+\frac{1}{2}}$. This mirrors the structure of the classical mirror-prox algorithm, but within the more general operator-based framework described above.

\noindent
\begin{algorithm}[H]
  \KwIn{Initial point $z_1 \in \mathcal{Z}$, operators $F$ and $H$ such that $F$ is $L$-relatively smooth with respect to $H$.}
  \For{$k = 1$ \KwTo $K$}{
        $z_{k+\frac{1}{2}} \in \textrm{Prox}_H(z_k,z_{k})$  \\
        $z_{k+1} \in \textrm{Prox}_H(z_k,z_{k+\frac{1}{2}})$  \\
  }
  \Return{$z_{\mathrm{out}} = \frac{1}{K}\sum_{k=1}^K z_{k+\frac{1}{2}}$}
  \caption{Mirror-Free Mirror Prox (MFMP)}\label{ALG:1}
\end{algorithm}
\begin{algorithm}[H]
  \KwIn{Initial point $z_1 \in \mathcal{Z}$, operators $F$ and $H$ such that $F$ is $L$-relatively smooth and $m$-strongly-monotone with respect to $H$.}
  \For{$k = 1$ \KwTo $K$}{
        $z_{k+\frac{1}{2}} \in \textrm{Prox}_H(z_k,z_{k})$  \\
        $z_{k+1} \in \textrm{Prox}_H^{SM}(z_k,z_{k+\frac{1}{2}})$  \\
  }
  \Return{$z_{\mathrm{out}} = z_K$}
  \caption{Strongly Monotone Mirror-Free Mirror Prox (MFMP-SM)}\label{ALG:2}
\end{algorithm}
The following lemma provides control over the progress made at each iteration. Such bounds are standard in analyses of mirror-prox--type methods (see, e.g.,~\cite{nemirovski2004prox}) and will play a central role in establishing the main convergence results of this section.
\begin{lemma}\label{lem:mfmp}
    The iterates of Mirror-Free Mirror Prox (Algorithm~\ref{ALG:1}) satisfy, 
    for all $k \geq 1$,  $z \in \mathcal{Z}$, 
\begin{align*}
  \langle F(z_{k+\frac{1}{2}}),& z_{k+\frac{1}{2}} - z \rangle 
  \leq L \left\langle H(z_k) - H(z_{k+1}), z_{k+1} - z \right\rangle - \left\langle F(z_k) - F(z_{k + \frac{1}{2}}), z_{k+\frac{1}{2}} - z_{k+1} \right\rangle \nonumber \\
  &\quad \quad+ L \left\langle H(z_k) - H(z_{k+\frac{1}{2}}), z_{k+\frac{1}{2}} - z_{k+1} \right\rangle.
\end{align*}
\end{lemma}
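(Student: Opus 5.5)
The plan is to combine the variational inequalities that define the two proximal steps and then split the left-hand side into pieces that match the right-hand side term by term. No structural assumption on $F$ or $H$ is needed: the inequality is an algebraic consequence of the two optimality conditions.

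\textbf{Step 1: optimality conditions.} By definition of $\textrm{Prox}_H$, the update $z_{k+1} \in \textrm{Prox}_H(z_k, z_{k+\frac12})$ gives, for every $z \in \cz$,
\begin{align*}
\ang{F(z_{k+\frac12}) + L(H(z_{k+1}) - H(z_k)),\, z_{k+1} - z} \le 0 .
\end{align*}
The update $z_{k+\frac12} \in \textrm{Prox}_H(z_k, z_k)$ gives, for every $z' \in \cz$,
\begin{align*}
\ang{F(z_k) + L(H(z_{k+\frac12}) - H(z_k)),\, z_{k+\frac12} - z'} \le 0 .
\end{align*}
In the second inequality I will use the specific test point $z' = z_{k+1}$.

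\textbf{Step 2: split the left-hand side.} I will write
\begin{align*}
\ang{F(z_{k+\frac12}), z_{k+\frac12} - z} = \ang{F(z_{k+\frac12}), z_{k+1} - z} + \ang{F(z_{k+\frac12}), z_{k+\frac12} - z_{k+1}} .
\end{align*}
The first inequality of Step 1 bounds the first summand by $L\ang{H(z_k) - H(z_{k+1}), z_{k+1} - z}$, which is exactly the first term on the right-hand side of the lemma.

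For the second summand, I will add and subtract $F(z_k)$:
\begin{align*}
\ang{F(z_{k+\frac12}), z_{k+\frac12} - z_{k+1}} = \ang{F(z_k), z_{k+\frac12} - z_{k+1}} - \ang{F(z_k) - F(z_{k+\frac12}), z_{k+\frac12} - z_{k+1}} .
\end{align*}
The second inequality of Step 1, tested at $z' = z_{k+1}$, gives
\begin{align*}
\ang{F(z_k), z_{k+\frac12} - z_{k+1}} \le L\ang{H(z_k) - H(z_{k+\frac12}), z_{k+\frac12} - z_{k+1}} .
\end{align*}
Summing the three pieces reproduces the claimed bound exactly.

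\textbf{Main obstacle.} The only delicate point is bookkeeping rather than analysis. One must test the first prox condition at the intermediate iterate $z_{k+1}$, not at the arbitrary $z$. The signs must also be kept straight, since the definition of $\textrm{Prox}_H$ writes $H(z') - H(z_a)$ while the lemma is stated with $H(z_k) - H(\cdot)$. Finally, one should note that the same constant $L$ appears in both prox steps, as in Algorithm~\ref{ALG:1}, so the two $L$-weighted terms match the statement directly.
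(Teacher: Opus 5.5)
Your proof is correct and is essentially the paper's argument. The paper also tests the optimality condition of the $z_{k+1}$-step at an arbitrary $z$ and that of the $z_{k+\frac{1}{2}}$-step at $z = z_{k+1}$, then adds the two and rearranges; your splitting of the left-hand side and adding and subtracting $F(z_k)$ is just that rearrangement written out (in your ``main obstacle'' remark, ``first prox condition'' means the $z_{k+\frac{1}{2}}$-step in algorithm order, which you listed second in Step 1, but the mathematics is consistent).
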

We now present the convergence results of our algorithm under the assumptions discussed in Section~\ref{sec:prelims}. Following Lemma \ref{lem:mfmp}, we obtain convergence results based on additional assumptions on the operators $F$ and $H$, beginning with those determined under relative smoothness.
\begin{theorem}[Operator relative smoothness guarantees]\label{thm:1}
    Let $F$ be $L$-relatively smooth with respect to $H$ and $\delta_1$ co-conservative with respect to $LH$, and let $H$ be $\delta_2$ conservative. After $K \geq 1$ iterations, the output $z_{out}$ of the MFMP algorithm satisfies, for initialization $z_1 \in \mathcal{Z}$ and any point $z\in \mathcal{Z}$,
    \begin{align*}
        \ang{F(z),z_{out}-z} \leq \frac{L}{K} \omega_H(z,z_1) +\frac{\delta_1}{L}+3\delta_2.
    \end{align*}
\end{theorem}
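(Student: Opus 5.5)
Starting from Lemma~\ref{lem:mfmp}, I will rewrite each of the three inner products on its right-hand side in terms of generalized Bregman divergences and then telescope. For the first term, Lemma~\ref{lem:3-point} applied with $(z_a,z_b,z_c)=(z,z_k,z_{k+1})$ gives
\begin{align*}
L\ang{H(z_k)-H(z_{k+1}),z_{k+1}-z} = L\omega_H(z,z_k)-L\omega_H(z,z_{k+1})-L\omega_H(z_{k+1},z_k)+L\oint_{P_{z z_k z_{k+1}}}H(r)^\top dr .
\end{align*}
Similarly, applying Lemma~\ref{lem:3-point} with $(z_a,z_b,z_c)=(z_{k+1},z_k,z_{k+\frac12})$ gives
\begin{align*}
L\ang{H(z_k)-H(z_{k+\frac12}),z_{k+\frac12}-z_{k+1}} = L\omega_H(z_{k+1},z_k)-L\omega_H(z_{k+1},z_{k+\frac12})-L\omega_H(z_{k+\frac12},z_k)+L\oint_{P_{z_{k+1}z_kz_{k+\frac12}}}H(r)^\top dr .
\end{align*}
Adding these two identities cancels $L\omega_H(z_{k+1},z_k)$. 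What remains is the telescoping difference, the negative terms $-L\omega_H(z_{k+1},z_{k+\frac12})-L\omega_H(z_{k+\frac12},z_k)$, and two closed-loop integrals of $LH$.

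Next I handle the cross term $-\ang{F(z_k)-F(z_{k+\frac12}),z_{k+\frac12}-z_{k+1}}=\ang{F(z_{k+\frac12})-F(z_k),z_{k+1}-z_{k+\frac12}}$. I use the analogue of Lemma~\ref{lem:relopsmooth} with the roles chosen so that $z_a=z_{k+1}$, $z_b=z_{k+\frac12}$, $z_c=z_k$. Rather than invoking that lemma directly, I apply Lemma~\ref{lem:3-point} to $F$ and to $LH$ with this same triple and subtract. This expresses the cross term as a combination of $\omega_F(z_{k+\frac12},z_k)+\omega_F(z_{k+1},z_{k+\frac12})$, the corresponding $L\omega_H$ terms, and the closed-loop integral of the difference $LH-F$ over $P_{z_{k+1}z_{k+\frac12}z_k}$ (up to orientation). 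Lemma~\ref{lem:gbdrelopsmooth} gives $\omega_F\le L\omega_H$, so these GBD terms are absorbed by the negative terms $-L\omega_H(z_{k+1},z_{k+\frac12})-L\omega_H(z_{k+\frac12},z_k)$ from the first step. The bookkeeping must be checked carefully, because the cross term contains $\omega(z_{k+1},z_k)$-type pieces that the 3-point identity reshuffles.

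After these cancellations, the per-iteration bound should read
\begin{align*}
\ang{F(z_{k+\frac12}),z_{k+\frac12}-z}\le L\omega_H(z,z_k)-L\omega_H(z,z_{k+1}) + \Big|\oint(LH-F)^\top dr\Big| + c\,L\Big|\oint H^\top dr\Big| ,
\end{align*}
with $c\le 3$ loop integrals of $H$. Co-conservativeness of $F$ with respect to $LH$ bounds the first loop term by $\delta_1$, and $\delta_2$-conservativeness of $H$ bounds each $H$-loop by $\delta_2$.

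Finally I sum over $k=1,\dots,K$ and drop $-L\omega_H(z,z_{K+1})\le0$, which is justified by Lemma~\ref{lem:posgbd} since $H$ is monotone: $F$ is monotone and relatively smooth, so $L\ang{H(z_b)-H(z_a),z_b-z_a}\ge0$. Monotonicity of $F$ gives $\ang{F(z),z_{k+\frac12}-z}\le\ang{F(z_{k+\frac12}),z_{k+\frac12}-z}$. Dividing by $K$ and using linearity in $z_{out}$ yields the claim.

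The main obstacle is matching the normalization in the statement, which has $\delta_1/L$ and $3\delta_2$ rather than $\delta_1$ and $3L\delta_2$. I expect this comes from dividing the whole inequality by $L$ at some stage, or from how the paper scales $H$ in the prox step, so I will track the constants against the statement rather than fix them in advance. The secondary difficulty is getting the path orientations right so that the loop integrals combine into at most three $H$-loops and one $(LH-F)$-loop.
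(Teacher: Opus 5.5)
Your route is essentially the paper's. The paper applies Lemma~\ref{lem:3-point} directly to $\Delta=LH-F$ and to $H$. Your second $H$-identity, combined with the three-point identity for $F$ on $(z_{k+1},z_k,z_{k+\frac12})$, reassembles exactly that.

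There is one sign slip to fix: $-\ang{F(z_k)-F(z_{k+\frac12}),z_{k+\frac12}-z_{k+1}}=\ang{F(z_k)-F(z_{k+\frac12}),z_{k+1}-z_{k+\frac12}}$, not the expression you wrote. This sign is the one matching your chosen triple, and only with it do the $\omega_F$ terms enter positively and the loops combine into $LH-F$. With it the bookkeeping closes as
\begin{align*}
\ang{F(z_{k+\frac12}),z_{k+\frac12}-z}\le{}& L\omega_H(z,z_k)-L\omega_H(z,z_{k+1})-\omega_\Delta(z_{k+1},z_{k+\frac12})-\omega_\Delta(z_{k+\frac12},z_k)-\omega_F(z_{k+1},z_k)\\
&+\oint_{P_{z_{k+1}z_kz_{k+\frac12}}}\Delta(r)^\top dr+L\oint_{P_{zz_kz_{k+1}}}H(r)^\top dr .
\end{align*}
Using $\omega_\Delta\ge0$ and monotonicity of $F$, there is exactly one $(LH-F)$-loop and one $H$-loop. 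The argument therefore proves $\ang{F(z),z_{out}-z}\le\frac{L}{K}\omega_H(z,z_1)+\delta_1+L\delta_2$.

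The genuine gap is the step you defer: this cannot be converted into $\frac{\delta_1}{L}+3\delta_2$. Dividing by $L$ would also divide $\ang{F(z),z_{out}-z}$ and $\frac{L}{K}\omega_H(z,z_1)$. Rescaling $H$ changes nothing, because MFMP only sees $LH$.

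In fact the displayed constant is unattainable by any argument. Replace $(H,L)$ by $(H/c,cL)$. This leaves unchanged the iterates, the relative smoothness hypothesis, $\delta_1$ (which refers to $LH$), and $\frac{L}{K}\omega_H(z,z_1)$, while $\delta_2\mapsto\delta_2/c$. Your $\delta_1+L\delta_2$ is invariant under this, but $\frac{\delta_1}{L}+3\delta_2$ is multiplied by $1/c$. So, letting $c\to\infty$, the statement would give the error-free bound $\ang{F(z),z_{out}-z}\le\frac{L}{K}\omega_H(z,z_1)$ whenever the constants are finite.

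That bound fails in a simple example. In $\R^2$, on a disk large enough to contain the points below, take $F\equiv g$ and $H(z)=Az$ with $A=I+sJ$, where $J$ is the rotation by $\pi/2$. Then $F$ is monotone and relatively smooth, and loop integrals of $H$ equal $2s$ times the enclosed area, so both constants are finite. One computes $z_{k+\frac12}=z_{k+1}=z_1-kA^{-1}g/L$ and $\omega_H(z,z_1)=\frac12\|z-z_1\|^2$. With $z_1=0$ and $z=-\frac{K}{L}g$, the left side exceeds $\frac{L}{K}\omega_H(z,z_1)$ by $\frac{\|g\|^2}{2L}\bigl(K-\frac{K+1}{1+s^2}\bigr)$, which is positive once $Ks^2>1$.

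The paper's own proof is consistent with this. It concludes with $\frac{L}{K}\omega_H(z,z_1)+\delta_2+3L\delta_1$, and its text interchanges the conservative and co-conservative constants. In the statement's labelling that is $\delta_1+3L\delta_2$, which has your shape, not the displayed one. So you should state and prove $\frac{L}{K}\omega_H(z,z_1)+\delta_1+L\delta_2$, with monotonicity of $F$ as an explicit hypothesis, rather than try to match the theorem's constants.
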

This is analogous to the original mirror prox method \citep{nemirovski2004prox} that achieves an approximation error of $\epsilon = O(K^{-1})$ after $K$ iterations. The approximation error obtained by MFMP includes an error term of $\frac{\delta_1}{L}+3\delta_2$ that arises due to the non-conservative nature of the objective and mirror operators. This error term is reminiscent of the bias term in stochastic optimization \citep{ajalloeian2020convergence} that appears due to the gradient estimates obtained being biased.   
The analysis in Theorem \ref{thm:1} uses the relative smoothness between the objective and mirror operators. The error term that arises due to the non-conservative nature of the these operators is bounded using the conservative and co-conservative properties. 

As noted, analyzing relative Lipschitzness between the objective and mirror operators isolates error terms from the mirror operator's non-conservativeness. This analysis appears in the proof of Theorem~\ref{thm:2}, which shows that MFMP solves the VI objective~\eqref{eq:VIobjapprox} up to an error of order $O(K^{-1}+L\delta)$.
\begin{theorem}[Relatively Lipschitz operators]\label{thm:2}
    Let $F$ be $L$-relatively Lipschitz with respect to $H$ and let $H$ be $\delta$ conservative. After running for $K$ iterations, the output $z_{out}$ of the MFMP algorithm satisfies 
    \begin{align*}
        \ang{F(z),z_{out}-z} \leq \frac{L}{K} \omega_H(z,z_1) +2L\delta ~\forall~z\in \cz
    \end{align*}
\end{theorem}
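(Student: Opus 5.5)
We start from Lemma~\ref{lem:mfmp}, which bounds $\ang{F(z_{k+\frac12}), z_{k+\frac12}-z}$ by three terms, and rewrite each $H$-inner product with the three point property (Lemma~\ref{lem:3-point}).

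\textbf{Telescoping term.} We apply Lemma~\ref{lem:3-point} with $(z_a,z_b,z_c)=(z,z_k,z_{k+1})$. This gives
\[
\ang{H(z_k)-H(z_{k+1}), z-z_{k+1}} = \omega_H(z,z_{k+1})+\omega_H(z_{k+1},z_k)-\omega_H(z,z_k)-\oint_{P_{z z_k z_{k+1}}}H(r)^\top dr .
\]
Hence
\[
L\ang{H(z_k)-H(z_{k+1}), z_{k+1}-z} = L\big(\omega_H(z,z_k)-\omega_H(z,z_{k+1})-\omega_H(z_{k+1},z_k)\big)+L\oint_{P_{z z_k z_{k+1}}} H^\top dr .
\]

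\textbf{Cross term.} We apply Lemma~\ref{lem:3-point} with $(z_a,z_b,z_c)=(z_{k+1},z_k,z_{k+\frac12})$. This gives
\[
L\ang{H(z_k)-H(z_{k+\frac12}), z_{k+\frac12}-z_{k+1}} = L\big(\omega_H(z_{k+1},z_k)-\omega_H(z_{k+1},z_{k+\frac12})-\omega_H(z_{k+\frac12},z_k)\big)+L\oint_{P_{z_{k+1}z_kz_{k+\frac12}}} H^\top dr .
\]
The sign of this cyclic integral should be checked against the lemma's orientation. In any case its magnitude is at most $L\delta$ by Definition~\ref{def:conservativeop}.

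\textbf{Combining the terms.} Adding the two expressions, the $\omega_H(z_{k+1},z_k)$ terms cancel. The remaining $F$-term is $-\ang{F(z_k)-F(z_{k+\frac12}), z_{k+\frac12}-z_{k+1}} = \ang{F(z_{k+\frac12})-F(z_k), z_{k+1}-z_{k+\frac12}}$. This is exactly the right-hand side of Definition~\ref{def:reloplip} with $(z_a,z_b,z_c)=(z_{k+1},z_{k+\frac12},z_k)$. Relative Lipschitzness therefore bounds it by $L\big(\omega_H(z_{k+\frac12},z_k)+\omega_H(z_{k+1},z_{k+\frac12})\big)$, which cancels the two negative GBD terms from the cross term. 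We are left with
\[
\ang{F(z_{k+\frac12}), z_{k+\frac12}-z} \le L\omega_H(z,z_k)-L\omega_H(z,z_{k+1})+2L\delta .
\]
Each of the two closed-path integrals is bounded by $\delta$ in absolute value using $\delta$-conservativeness of $H$.

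\textbf{Summation and averaging.} Summing over $k=1,\dots,K$ telescopes the divergences. We drop $-L\omega_H(z,z_{K+1})$, which needs $\omega_H\ge0$. The paper implicitly requires $H$ monotone here (Lemma~\ref{lem:posgbd}); I would state this assumption explicitly, since the mirror operator is monotone in all intended uses. Monotonicity of $F$ gives $\ang{F(z), z_{k+\frac12}-z}\le \ang{F(z_{k+\frac12}), z_{k+\frac12}-z}$. I would also take monotonicity of $F$ as a standing assumption, as the paper does for the VI setting. Dividing by $K$ and using linearity in $z_{out}$ then yields $\frac{L}{K}\omega_H(z,z_1)+2L\delta$.

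\textbf{Main obstacle.} The main difficulty is bookkeeping: choosing the correct orderings of points in Lemma~\ref{lem:3-point} so that the GBD terms cancel exactly against those from relative Lipschitzness. I would try the orderings above first and adjust orientations if the signs disagree. Orientation of the closed paths does not matter for the final bound, since only absolute values at most $\delta$ enter.
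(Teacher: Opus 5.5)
Your proposal is correct and follows the paper's proof essentially step for step. Both start from Lemma~\ref{lem:mfmp} and apply the three-point property to both $H$-terms. Your loop $P_{z z_k z_{k+1}}$ is the paper's $-\oint_{P_{z_{k+1}z_kz}}$, i.e.\ the same loop with reversed orientation, and $P_{z_{k+1}z_kz_{k+\frac12}}$ coincides. Both then use relative Lipschitzness on the $F$ cross term, $\delta$-conservativeness for the $2L\delta$, and finally telescoping, averaging and monotonicity of $F$.

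Two small remarks. First, the middle expression in your rewrite of the $F$-term has a sign slip: it should read $\ang{F(z_k)-F(z_{k+\frac12}), z_{k+1}-z_{k+\frac12}}$. That corrected form is indeed the relative-Lipschitz right-hand side for $(z_{k+1},z_{k+\frac12},z_k)$, so your conclusion stands. Second, you do not need a separate monotonicity assumption on $H$ to drop the last telescoped term. The inequality $L\,\omega_H(z,z_{K+1})\ge 0$ already follows from the hypothesis by setting $z_a=z_b$ in Definition~\ref{def:reloplip}.
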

The above rate can be improved under strong monotonicity by relying on the following proximal update:
\begin{align*}
    \textrm{Prox}_H^{SM}&(z_a,z_b)= \{ z' : \langle F(z_b) + L(H(z') - H(z_a)) + m(H(z') - H(z_b)), z' - z \rangle \leq 0 \quad \forall z \}.
\end{align*}
We now present the Strongly Monotone Mirror-Free Mirror Prox (MFMP-SM) algorithm, which generalizes the strongly monotone mirror prox algorithm of~\cite{cohen2020relative}.
\begin{theorem}\label{thm:MFMP-SM}
    The iterates $z_k,z_{k+\frac{1}{2}}$ and $z_{k+1}$ of the MFMP-SM algorithm (Algorithm \ref{ALG:2}) satisfy
\begin{align*}
     \omega_H(z^*,z_{k+1})\leq \frac{L}{m+L}\omega_H(z^*,z_{k})+\frac{E_k}{m+L}
\end{align*}
where the error term $E_k$ is
\begin{align*}\vspace{-12pt}
    E_k &= L \oint_{P_{z_{k+1}z_k z_{k+\frac{1}{2}}}} H^T(r) \, dr+ L \oint_{P_{z^*z_kz_{k+\frac{1}{2}}}} H^T(r) \, dr \\
    &+ m \oint_{P_{z^*z_{k+\frac{1}{2}}z_{k+1}}} H^T(r) \, dr - L \oint_{P_{z_kz_{k+\frac{1}{2}}z_{k+1}}} F^T(r) \, dr.\vspace{-12pt}
\end{align*}
\end{theorem}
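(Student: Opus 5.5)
Here $z^*$ is a solution of the VI, so that $\ang{F(z^*), z - z^*} \geq 0$ for all $z$; together with monotonicity this gives $\ang{F(z_{k+\frac12}), z_{k+\frac12} - z^*} \geq 0$. The plan mirrors the proof of Lemma~\ref{lem:mfmp}. We write the two variational optimality conditions and combine them. Every inner product of the form $\ang{H(\cdot)-H(\cdot),\cdot-\cdot}$ is then converted into GBDs via the three point property (Lemma~\ref{lem:3-point}). The extra closed-loop integrals are kept as the error $E_k$.

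\textbf{Step 1 (optimality conditions).} From $z_{k+\frac12}\in \textrm{Prox}_H(z_k,z_k)$, tested at $z=z_{k+1}$, we get
\[
\ang{F(z_k)+L(H(z_{k+\frac12})-H(z_k)),\, z_{k+\frac12}-z_{k+1}}\le 0 .
\]
From $z_{k+1}\in\textrm{Prox}^{SM}_H(z_k,z_{k+\frac12})$, tested at $z=z^*$, we get
\[
\ang{F(z_{k+\frac12})+L(H(z_{k+1})-H(z_k))+m(H(z_{k+1})-H(z_{k+\frac12})),\, z_{k+1}-z^*}\le 0 .
\]
Adding the two, and using $\ang{F(z_{k+\frac12}),z_{k+\frac12}-z^*}\ge0$ to drop that term, yields
\[
L\ang{H(z_k)-H(z_{k+1}),z_{k+1}-z^*} + L\ang{H(z_k)-H(z_{k+\frac12}),z_{k+\frac12}-z_{k+1}} + m\ang{H(z_{k+\frac12})-H(z_{k+1}),z_{k+1}-z^*} - \ang{F(z_k)-F(z_{k+\frac12}),z_{k+\frac12}-z_{k+1}} \ge 0 .
\]

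\textbf{Step 2 (three-point rewrites).} We apply Lemma~\ref{lem:3-point} to each $H$-inner product.

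The $L$-term with $z_{k+1}-z^*$ uses the triple $(z^*, z_k, z_{k+1})$. It becomes $\omega_H(z^*,z_k)-\omega_H(z^*,z_{k+1})-\omega_H(z_{k+1},z_k)$ plus a loop integral.

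The $m$-term uses the triple $(z^*, z_{k+\frac12}, z_{k+1})$. It becomes $\omega_H(z^*,z_{k+\frac12})-\omega_H(z^*,z_{k+1})-\omega_H(z_{k+1},z_{k+\frac12})$ plus the loop over $P_{z^*z_{k+\frac12}z_{k+1}}$.

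The middle $L$-term uses $(z_{k+1}, z_k, z_{k+\frac12})$. It gives $\omega_H(z_{k+1},z_k)-\omega_H(z_{k+1},z_{k+\frac12})-\omega_H(z_{k+\frac12},z_k)$ plus the loop over $P_{z_{k+1}z_kz_{k+\frac12}}$.

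\textbf{Step 3 (absorbing the $F$-term).} The $F$-term is handled by relative smoothness. Lemma~\ref{lem:relopsmooth} with $(z_a,z_b,z_c)=(z_{k+1},z_{k+\frac12},z_k)$ bounds it by
\[
L\bigl(\omega_H(z_{k+\frac12},z_k)+\omega_H(z_{k+1},z_{k+\frac12})\bigr) - \oint F^\top dr .
\]
The factor $L$ on the $F$-loop in $E_k$ suggests running this with $F$ scaled appropriately (so that both $\oint F$ and the $H$-divergences carry $L$). I would check the normalization, possibly by replacing $F$ by $LF$ in the prox definition, or by accepting the stated constant. This explains the last term of $E_k$, up to orientation of the loop.

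\textbf{Step 4 (collecting terms).} After these substitutions, the $\omega_H(z_{k+1},z_k)$ terms cancel, and the $\omega_H(z_{k+\frac12},z_k)$ and $\omega_H(z_{k+1},z_{k+\frac12})$ terms cancel against the smoothness bound. What remains is
\[
(L+m)\,\omega_H(z^*,z_{k+1}) \le L\,\omega_H(z^*,z_k) + m\,\omega_H(z^*,z_{k+\frac12}) - m\,\omega_H(z_{k+1},z_{k+\frac12}) + \text{loops}.
\]
The term $m\,\omega_H(z^*,z_{k+\frac12})$ must still be removed. For this I would use relative strong monotonicity. Applied in GBD form (the analogue of Lemma~\ref{lem:gbdrelopsmooth}), it gives $\omega_F\ge m\,\omega_H$. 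Combined with the VI property at $z^*$, I expect this to trade $m\,\omega_H(z^*,z_{k+\frac12})$ against the dropped $F$-term, generating the loop $L\oint_{P_{z^*z_kz_{k+\frac12}}}H$. Alternatively, that loop may come from a three-point rewrite around $z^*, z_k, z_{k+\frac12}$.

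This bookkeeping is the main obstacle. The task is to choose the triples so that exactly the four loops in $E_k$ appear with the right signs, and to verify the sign and orientation conventions of $\oint$. Dividing by $m+L$ then gives the claim.
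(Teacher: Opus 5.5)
Your skeleton matches the paper's proof: the same test points ($z=z_{k+1}$ in the first prox condition, $z=z^*$ in the second), the same three triples for Lemma~\ref{lem:3-point}, and the relative-smoothness bound on the $F$-difference with the triple $(z_{k+1},z_{k+\frac12},z_k)$.

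The genuine gap is in Step~1. There you discard $\ang{F(z_{k+\frac12}), z_{k+\frac12}-z^*}$ using only that it is nonnegative. But this is exactly the quantity that must absorb the $+m\,\omega_H(z^*,z_{k+\frac12})$ left over in Step~4. Once it is gone, nothing in your final inequality can cancel that term, so the factor $L/(m+L)$ does not follow. Your Step~4 instinct to trade against ``the dropped $F$-term'' is right, but Step~1 has already thrown that term away. The paper keeps it on the left-hand side and bounds it from below:
\begin{align*}
\ang{F(z_{k+\frac12}), z_{k+\frac12}-z^*}
&\ge \ang{F(z_{k+\frac12})-F(z^*), z_{k+\frac12}-z^*}
\ge m\ang{H(z_{k+\frac12})-H(z^*), z_{k+\frac12}-z^*}\\
&= m\bigl(\omega_H(z^*,z_{k+\frac12})+\omega_H(z_{k+\frac12},z^*)\bigr)
\ge m\,\omega_H(z^*,z_{k+\frac12}).
\end{align*}
This chain uses the VI property at $z^*$ and relative strong monotonicity. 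It also uses the exact identity $\ang{H(a)-H(b),a-b}=\omega_H(a,b)+\omega_H(b,a)$, where the two straight-line integrals cancel, and monotonicity of $H$ (Lemma~\ref{lem:posgbd}). Your GBD-form variant also works once the term is kept: $\ang{F(z_{k+\frac12}),z_{k+\frac12}-z^*}\ge\omega_F(z^*,z_{k+\frac12})\ge m\,\omega_H(z^*,z_{k+\frac12})$. After that, drop $-m\,\omega_H(z_{k+1},z_{k+\frac12})\le 0$ (again by monotonicity of $H$) and divide by $m+L$.

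The absorption step produces no loop integral, and any loop it did produce would carry the factor $m$, not $L$. So it cannot be the source of $L\oint_{P_{z^*z_kz_{k+\frac12}}}H$. Your Step~2 rewrite of $L\ang{H(z_k)-H(z_{k+1}),z_{k+1}-z^*}$ gives $L\oint_{P_{z^*z_kz_{k+1}}}H$, and that is correct. The paper obtains this term from the same identity: its accompanying divergences $\omega_H(z^*,z_{k+1})+\omega_H(z_{k+1},z_k)-\omega_H(z^*,z_k)$ are exactly those of the triple $(z^*,z_k,z_{k+1})$. Its combined display writes $z_{k+\frac12}-z^*$ in place of $z_{k+1}-z^*$ in that inner product, which is where the $z_{k+\frac12}$ in the loop label appears to come from.

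Similarly, with $F$ unscaled in the prox, a direct computation of the $F$-term gives $-\oint_{P_{z_kz_{k+\frac12}z_{k+1}}}F$ with coefficient $1$. Lemma~\ref{lem:relopsmooth} as literally stated carries the reversed loop orientation, which explains your orientation doubt. The paper's proof does not derive the factor $L$ on the $F$-loop either; it simply lists it. So trust your own bookkeeping rather than contorting it to reproduce $E_k$ verbatim.
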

\begin{corollary}\label{cor:final}
    The iterates of the MFMP-SM algorithm (Algorithm \ref{ALG:2}) after $K$ iterations satisfy
\begin{align*}
     \omega_H(z^*,z_{out})\leq (\frac{L}{m+L})^K\omega_H(z^*,z_{0})+\sum_{k=0}^{K-1}\frac{E_k}{(m+L)^k}. 
\end{align*}
\end{corollary}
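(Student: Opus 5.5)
The plan is to unroll the one-step contraction of Theorem~\ref{thm:MFMP-SM} by induction on $K$, using $z_{\mathrm{out}} = z_K$ from Algorithm~\ref{ALG:2}. Following the statement of the corollary, I index the iterates from $z_0$, so the recursion is applied for $k = 0, \dots, K-1$. Write $\rho = \frac{L}{m+L} \in [0,1)$ and $a_k = \omega_H(z^*, z_k)$.

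Theorem~\ref{thm:MFMP-SM} then reads $a_{k+1} \le \rho\, a_k + \frac{E_k}{m+L}$. The inductive hypothesis is
\begin{align*}
a_K \le \rho^K a_0 + \sum_{k=0}^{K-1} \rho^{K-1-k}\,\frac{E_k}{m+L}.
\end{align*}
The base case $K=1$ is exactly the theorem. For the inductive step, apply the theorem at index $K$ and substitute the hypothesis for $a_K$. The coefficient $\rho \ge 0$ preserves the inequality, and this holds whatever the signs of the $E_k$. Multiplying through by $\rho$ shifts each exponent $K-1-k$ to $K-k$, and the new error term $E_K/(m+L)$ enters with coefficient $\rho^0$. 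This closes the induction and gives the displayed bound with $K+1$ in place of $K$.

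The main obstacle is matching the error weights in this exact unrolled bound to those in the statement, which are $E_k/(m+L)^k$. The two agree term by term only if
\begin{align*}
\frac{\rho^{K-1-k}}{m+L} \le \frac{1}{(m+L)^k}
\end{align*}
for all $0 \le k \le K-1$, i.e.\ $L^{K-1-k} \le (m+L)^{K-2k}$. This is not automatic.

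My first attempt would be to normalize the problem so that $m + L = 1$. This is possible because the update $\textrm{Prox}_H^{SM}$ and the quantities $\omega_H$ and $E_k$ scale consistently when $F$ and $H$ are rescaled. Under this normalization, $(m+L)^k = 1$ and $\rho^{K-1-k} \le 1$. The stated bound then follows from the unrolled one whenever each $E_k \ge 0$, since in that case enlarging the nonnegative coefficients only weakens the bound. For possibly negative $E_k$, I would instead report the sharper unrolled form, which is the natural content of the corollary. The stated form then follows by replacing each $E_k$ with $\max(E_k, 0)$.

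Finally, I would remark that when $F$ and $H$ are conservative (the $\delta$ conservative and co-conservative settings with $\delta = 0$), every closed-loop integral in $E_k$ vanishes. The bound then reduces to pure linear convergence $\omega_H(z^*, z_K) \le \rho^K \omega_H(z^*, z_0)$.
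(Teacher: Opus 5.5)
Your induction is sound, and it is the same repeated application of Theorem~\ref{thm:MFMP-SM} that the paper uses. It yields
\[
\omega_H(z^*,z_K)\le \left(\frac{L}{m+L}\right)^K\omega_H(z^*,z_0)+\sum_{k=0}^{K-1}\frac{L^{K-1-k}}{(m+L)^{K-k}}\,E_k ,
\]
which is exactly what the recursion gives, and you are right that these weights do not match $(m+L)^{-k}$. The paper's own unrolling slips at this same point. Its second line should read $(\frac{L}{m+L})^2\omega_H(z^*,z_{K-2})+\frac{E_{K-1}}{m+L}+\frac{L\,E_{K-2}}{(m+L)^2}$, not $\frac{E_{K-1}}{(m+L)^2}+\frac{E_{K-2}}{m+L}$. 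The weights in the statement come from that slip.

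The genuine gap is your rescue step, because normalizing to $m+L=1$ is not without loss of generality: the inequality you must prove is not invariant under rescaling.
\begin{itemize}
\item Replacing $(F,H)$ by $(aF,bH)$ with $a,b>0$ leaves both proximal steps, hence all iterates, unchanged. It sends $(L,m)\mapsto(aL/b,\,am/b)$ and $\omega_H\mapsto b\,\omega_H$.
\item The $H$-loop terms of $E_k$ scale by $a$, but the term $-L\oint F^\top dr$ scales by $a^2/b$. So $E_k$ does not scale consistently, and taking $a=b$ leaves $m+L$ unchanged.
\item More fundamentally, the unit-weight bound you prove for the normalized instance, divided back by $b$, has error weights that do not depend on $k$. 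They do not decay like $(m+L)^{-k}$, so you have proved a different inequality.
\end{itemize}

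The sign issue is not resolved either. For $E_k<0$ and $m+L=1$, one has $(\frac{L}{m+L})^{K-1-k}E_k\ge E_k$, so the unrolled form is weaker than the stated one, not sharper. Replacing $E_k$ by $\max(E_k,0)$ changes the statement.

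In fact, no argument that uses only Theorem~\ref{thm:MFMP-SM} can deliver the stated weights. Take a sequence obeying the recursion with equality, with $\omega_H(z^*,z_0)=0$, $E_0=\dots=E_{K-2}=0$, $E_{K-1}>0$, $m+L>1$ and $K\ge 3$. Then $\omega_H(z^*,z_K)=E_{K-1}/(m+L)>E_{K-1}/(m+L)^{K-1}$.

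The defensible conclusion of your argument is the unrolled bound above. It still reduces to the linear rate $(\frac{L}{m+L})^K\omega_H(z^*,z_0)$ when all loop integrals vanish. The corollary as written needs additional hypotheses or corrected weights.
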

\begin{proof}
The proof follows by repeatedly applying the result of Theorem~\ref{thm:MFMP-SM}, since for $z_{out} = z_K$ as the output of the MFMP-SM algorithm after $K$ iterations, we have
\begin{align*}
    \omega_H(z^*,z_{K})&\leq \frac{L}{m+L}\omega_H(z^*,z_{K-1})+\frac{E_{K-1}}{m+L}\\
    &\leq (\frac{L}{m+L})^2\omega_H(z^*,z_{K-2})+\frac{E_{K-1}}{(m+L)^2}+\frac{E_{K-2}}{m+L}\\
    &\dots\\
    &\leq (\frac{L}{m+L})^K\omega_H(z^*,z_{0})+\sum_{k=0}^{K-1}\frac{E_k}{(m+L)^k}.\qedhere
\end{align*}
\end{proof}
Thus MFMP-SM may achieve a linear rate of convergence of the Bregman divergence of the iterates from the solution, $\omega_H(z^*,z_{k})$, up to an error term. The error term $E_k$ consists of loop integrals of the operators $F$ and $H$ over three points chosen from $z_k,z_{k+\frac{1}{2}},z_{k+1}$ and $z^*$. Note that in the case where the operators $F$ or $H$ are conservative, the corresponding loop integrals vanish. Additionally, the loop terms involving $F$ may sum to zero due to the disjoint nature of the terms. The proofs of the statements in this section are provided in Appendix~\ref{app:sec3}.
\section{Third-order min-max optimization}\label{sec:thirdorder}
Several recent works have focused on the use of higher-order methods to solve the VI objective \eqref{eq:VIobjapprox}. While implementing the first- and second-order instances of these methods has been addressed~\citep{lin2022explicit}, even higher-order instances provide additional challenges. In this section, we present an important pair of relatively smooth and strongly monotone operators which allows us to implement the third-order step in the higher-order algorithms used to solve Eq.~\eqref{eq:minmax}. 
The HOMVI algorithm and its variants \citep{adil2022optimal,lin2022perseus,vyas2023beyond} solve the VI objective \eqref{eq:VIobjapprox} for a monotone operator $\Phi$.
The third-order step of HOMVI uses a model operator $F$, for which we show how to construct a relative operator $H$.  The operator $\Phi$ is assumed to be third-order smooth. We will define the notion of $p^{th}$-order smoothness and the $p^{th}$-order Taylor operator (provided in Appendix \ref{app:sec4}) for which we define directional derivatives. 
\begin{definition}[Directional derivative]
    Consider a $k$-times differentiable operator $F:\mathcal{Z} \rightarrow \mathbb{R}^d$, and let $z \in \mathcal{Z}$, $\mathcal{Z}\subseteq \mathbb{R}^d$, $h \in \mathbb{R}^d$. For $r<k+1$, we let
    $$\nabla^k F(z)[h]^r = \frac{\delta^k
    }{\delta h}|_{t_1=0,\dots ,t_r=0}F(z+t_1h+\dots +t_rh)$$
    denote the $k^{th}$ directional derivative of $F$ at $z$ along $h$.
\end{definition}
For a third-order smooth operator $\Phi$, the third-order sub-problem is defined as follows.
\begin{definition}[Third-order sub-problem]\label{def:third_order_sub}
    \textit{Find} $z_b$ such that
    \begin{align*}
        & \ang{\Phi(z_a) + \nabla_x \Phi(z_a) [h] + \frac{1}{2} \nabla^2 \Phi(z_a)[h, h]+2M \nabla_h d_4(h), h+z_a - z} \leq 0\quad \forall ~z \in \cz
    \end{align*}
where $h=z_b-z_a$.
\end{definition}

We first present an operator $H$ such that $F$ is relatively smooth and strongly monotone with respect to $H$, thus allowing us to solve the sub-problem. Furthermore, we present a subclass of functions $f$ in Eq.~\eqref{eq:minmax} and their corresponding model operators $F$ for which a \emph{conservative} relative operator $H'$ exists, allowing low residual error for MFMP-SM.
\begin{lemma}\label{lem:thirdrel}
    Consider the operators
    \begin{align*}
        F(h) &= \Phi(z_a) + \nabla \Phi(z_a)[h] + \frac{1}{2} \nabla^2 \Phi(z_a)[h,h] + 2M \nabla_h d_4(h), \\
        H(h) &= \frac{1}{2} \left( 1 - \frac{1}{\tau} \right) \nabla_z \Phi(z_a)[h] + \frac{M - \tau L_3}{2} \nabla_h d_4(h),
    \end{align*}
    where $\tau > 0$. The operator $F$ is 1-strongly monotone and $\frac{\tau+1}{\tau-1}$ relatively smooth with respect to $H$.
\end{lemma}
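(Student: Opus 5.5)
The plan is to reduce both claims to pointwise inequalities between the symmetric parts of the Jacobians, and then control the one indefinite term, $\nabla^2\Phi(z_a)[h]$, using monotonicity and third-order smoothness of $\Phi$.

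\textbf{Reduction to Jacobians.} By Lemma~\ref{lem:jacobrels}, $\frac{\tau+1}{\tau-1}$-relative smoothness of $F$ with respect to $H$ is equivalent to
\[
u^\top\Big(\tfrac{\tau+1}{\tau-1}\nabla H(h)-\nabla F(h)\Big)u\ge 0 \quad\text{for all } h,u .
\]
The same integration-along-segments argument (Lemma~\ref{lem:jacobmon} applied to the operator $F-mH$) shows that $1$-relative strong monotonicity is equivalent to $u^\top(\nabla F(h)-\nabla H(h))u\ge 0$. The Jacobians are
\begin{align*}
\nabla F(h)&=\nabla\Phi(z_a)+\nabla^2\Phi(z_a)[h]+2M\nabla^2 d_4(h),\\
\nabla H(h)&=\tfrac12\big(1-\tfrac1\tau\big)\nabla\Phi(z_a)+\tfrac{M-\tau L_3}{2}\nabla^2 d_4(h).
\end{align*}
I will take $d_4(h)=\frac14\|h\|^4$, so that
\[
u^\top\nabla^2 d_4(h)u=\|h\|^2\|u\|^2+2\langle h,u\rangle^2\ge\|h\|^2\|u\|^2 .
\]
I will also assume $M\ge\tau L_3$ and $\tau>1$, so that $H$ is monotone.

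\textbf{Key sandwich.} The central step is a two-sided bound on the indefinite middle term,
\[
\big|u^\top\nabla^2\Phi(z_a)[h]u\big|\le \tfrac1\tau\, u^\top\nabla\Phi(z_a)u+c\,\tau L_3\|h\|^2\|u\|^2 ,
\]
for an absolute constant $c$. To prove it:
\begin{itemize}
\item Monotonicity of $\Phi$ and Lemma~\ref{lem:jacobmon} give $u^\top\nabla\Phi(z_a\pm s h)u\ge0$ for every $s$.
\item Taylor-expand $\nabla\Phi(z_a\pm sh)$ to second order and bound the remainder using third-order smoothness (Lipschitz $\nabla^3\Phi$ with constant $L_3$). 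This gives
\[
0\le u^\top\nabla\Phi(z_a)u\pm s\,u^\top\nabla^2\Phi(z_a)[h]u+\tfrac{s^2}{2}u^\top\nabla^3\Phi(z_a)[h,h]u+O(L_3 s^3\|h\|^3)\|u\|^2 .
\]
\item Adding the $+$ and $-$ versions isolates the $\nabla^3$ term. Bounding the $\nabla^3$ term and remainder by $O(L_3 s^2\|h\|^2)\|u\|^2$ and choosing $s=\tau$ (Nesterov's trick from implementable tensor methods) yields the displayed bound.
\end{itemize}

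\textbf{Conclusion.} Substituting the sandwich into $\nabla F$ gives
\begin{align*}
u^\top\nabla F(h)u&\ge\big(1-\tfrac1\tau\big)u^\top\nabla\Phi(z_a)u+(2M-c\tau L_3)\|h\|^2\|u\|^2,\\
u^\top\nabla F(h)u&\le\big(1+\tfrac1\tau\big)u^\top\nabla\Phi(z_a)u+(2M+c\tau L_3)\,u^\top\nabla^2 d_4(h)u .
\end{align*}
\begin{itemize}
\item \emph{Strong monotonicity.} Compare the lower bound coefficientwise with $\nabla H$. In the $\nabla\Phi$ coefficient, $1-\frac1\tau\ge\frac12(1-\frac1\tau)$. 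In the $d_4$ coefficient, $2M-c\tau L_3\ge\frac{M-\tau L_3}{2}$. Both comparisons are valid because $u^\top\nabla\Phi(z_a)u\ge 0$ and $\nabla^2 d_4\succeq\|h\|^2 I$.
\item \emph{Relative smoothness.} Compare the upper bound coefficientwise with $\frac{\tau+1}{\tau-1}\nabla H$, whose $\nabla\Phi$ coefficient is $\frac{\tau+1}{2\tau}$ and whose $d_4$ coefficient is $\frac{\tau+1}{\tau-1}\cdot\frac{M-\tau L_3}{2}$.
\end{itemize}

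\textbf{Main obstacle.} The delicate step is matching constants in the smoothness direction. Naively, the $\nabla\Phi$ coefficient $1+\frac1\tau$ exceeds $\frac{\tau+1}{2\tau}$ by a factor of $2$. So I expect to need a sharper version of the sandwich: instead of bounding $u^\top\nabla F u$ from above directly, I would split it as $u^\top\nabla\Phi(z_a)u+\text{middle term}$ and bound the middle term in terms of $u^\top(\nabla\Phi(z_a)+\gamma\nabla^2 d_4(h))u$ with a better-tuned $s$. If that still fails, I would recheck the normalization of $d_4$ and of $L_3$ in the appendix definitions, and adjust $c$ and the admissible range of $M$ relative to $\tau L_3$ until the ratio comes out as $\frac{\tau+1}{\tau-1}$. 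The strong-monotonicity direction has ample slack and should be routine.
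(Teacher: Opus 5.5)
Your route is the paper's: reduce both claims to quadratic-form inequalities between $\nabla F(h)$ and $\nabla H(h)$ via Lemma~\ref{lem:jacobrels}, control $u^\top\nabla^2\Phi(z_a)[h]u$ with a Nesterov-type sandwich at scale $\tau$, and compare coefficients. However, your derivation of the sandwich does not work as written. You read third-order smoothness as Lipschitz continuity of $\nabla^3\Phi$ and expand $\nabla\Phi(z_a\pm sh)$ to second order. Three things then fail:
\begin{itemize}
\item Nothing bounds $u^\top\nabla^3\Phi(z_a)[h,h]u$ by $O(L_3)\|h\|^2\|u\|^2$; a Lipschitz constant for $\nabla^3\Phi$ says nothing about its size at $z_a$.
\item The cubic remainder is not $O(s^2\|h\|^2)$ uniformly in $h$.
\item Adding the $\pm$ inequalities eliminates exactly the term you want to bound.
\end{itemize}
The assumption \eqref{assmpt:smooth} with $p=3$ is the first-order remainder bound $\|\nabla\Phi(z_b)-\nabla\Phi(z_a)-\nabla^2\Phi(z_a)[z_b-z_a]\|\le\frac{L_3}{2}\|z_b-z_a\|^2$. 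Inserting $z_b=z_a\pm\tau h$ into $0\le u^\top\nabla\Phi(z_b)u$ gives each side of the sandwich separately, with $c=\frac12$. This is what the paper does.

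The more serious problem is the factor of two you flag as the main obstacle. It is not an artifact of a loose sandwich: the relative-smoothness claim is false for $H$ as stated. At $h=0$, both $\nabla^2\Phi(z_a)[h]$ and $\nabla^2 d_4(h)=\|h\|^2I+2hh^\top$ vanish. So $\nabla F(0)=\nabla\Phi(z_a)$, while $\frac{\tau+1}{\tau-1}\nabla H(0)=\frac{\tau+1}{2\tau}\nabla\Phi(z_a)$. For $\tau>1$ and any $u$ with $u^\top\nabla\Phi(z_a)u>0$, the required inequality fails; at $h=0$ alone the constant must be at least $\frac{2\tau}{\tau-1}>\frac{\tau+1}{\tau-1}$. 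No retuning of $s$, $c$, or the range of $M$ can repair this, since these only affect terms that vanish at $h=0$.

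The paper's proof reaches $\frac{\tau+1}{\tau-1}$ only by silently changing normalizations. The operator it bounds below $\nabla F$ is $(1-\frac1\tau)\nabla\Phi(z_a)+\frac{M-\tau L_3}{2}\nabla^2 d_4(h)$, which it identifies with $\nabla H$ despite the factor $\frac12$ in $H$. It also uses $\frac M2\nabla^2 d_4$ rather than $2M\nabla^2 d_4$ in $\nabla F$.

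What your argument (with $c=\frac12$) does prove is the consistent version. Take $H(h)=(1-\frac1\tau)\nabla\Phi(z_a)[h]+\frac{M-\tau L_3}{2}\nabla d_4(h)$, set the $d_4$-coefficient of $F$ to $\frac M2$, and assume $\tau>1$ and $M\ge\tau^2L_3$. Then
\begin{itemize}
\item $u^\top(\nabla F-\nabla H)u\ge\frac{\tau L_3}{2}\big(u^\top\nabla^2d_4(h)u-\|h\|^2\|u\|^2\big)\ge0$;
\item $u^\top\big(\frac{\tau+1}{\tau-1}\nabla H-\nabla F\big)u\ge\frac{M-\tau^2L_3}{\tau-1}\|h\|^2\|u\|^2\ge0$.
\end{itemize}
The strong-monotonicity direction is now tight, so you must keep $u^\top\nabla^2 d_4(h)u$ intact rather than weakening it to $\|h\|^2\|u\|^2$ as in your lower bound. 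You should state and prove this corrected lemma rather than search for a sharper sandwich.
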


\begin{lemma}[Conservative relative operator]\label{lem:conrelop}
    All functions of the form $f(x,y) = \alpha(x)-\beta(y)+x^\top Ay$, where $\alpha(x)$ and $\beta(y)$ are convex functions in $x$ and $y$ respectively where $z = (x,y) \in \mathcal{Z}$ correspond to operators $\Phi(x,y) = (\nabla_x \alpha(x)+Ay,\nabla_y \beta(y)-A^\top x)$, which give rise to model operators $F = \Phi(z_a)+\nabla \Phi(z_a)[h]+ \nabla^2 \Phi(z_a) [h,h]+\frac{M}{2}\nabla_h d_4(h)$ that are $\frac{\tau+1}{\tau-1}$ relatively smooth and $1$-strongly monotone with respect to \textbf{conservative} operators $H'$ of the form
\begin{align*}
    H'(h) &= \left( \nabla_x \alpha(x_a) + A y_a, \nabla_y \beta(y_a) - A^\top x_a \right) + \frac{1 - \frac{1}{\tau}}{2} \begin{bmatrix}
        \nabla_x^2 \alpha(x_a) & 0 \\
        0 & \nabla_y^2 \beta(y_a)
    \end{bmatrix} h\\
    &+ \frac{M - \tau L_3}{2} \nabla_h d_4(h)\ .
\end{align*}
\end{lemma}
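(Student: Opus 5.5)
The argument has two parts. First we check that $H'$ is conservative. Then we check that $F$ is $\frac{\tau+1}{\tau-1}$ relatively smooth and $1$-strongly monotone with respect to $H'$. For the second part we reduce to the Jacobian criteria of Lemma~\ref{lem:jacobrels} (and its strong-monotonicity analogue, proved the same way as Lemma~\ref{lem:jacobmon}), using Lemma~\ref{lem:thirdrel}.

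\textbf{Conservativeness.} $H'$ is a sum of three terms, and each is a gradient:
\begin{itemize}
\item The constant vector $\Phi(z_a)$ is the gradient of $h\mapsto \ang{\Phi(z_a),h}$.
\item The block-diagonal term $\frac{1-1/\tau}{2}D h$, with $D=\mathrm{diag}(\nabla_x^2\alpha(x_a),\nabla_y^2\beta(y_a))$, is the gradient of $\frac{1-1/\tau}{4}h^\top D h$, because $D$ is symmetric.
\item $\frac{M-\tau L_3}{2}\nabla_h d_4(h)$ is a gradient by definition.
\end{itemize}
Hence $H'=\nabla\psi$ for a smooth $\psi$. Every closed line integral then vanishes, and $H'$ is $0$ conservative in the sense of Definition~\ref{def:conservativeop}. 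Equivalently, the Jacobian of $H'$ is symmetric.

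\textbf{Relation to Lemma~\ref{lem:thirdrel}.} For this bilinear-coupled $f$ we have
\[
\nabla\Phi(z_a)=\begin{bmatrix}\nabla^2_x\alpha(x_a) & A\\ -A^\top & \nabla_y^2\beta(y_a)\end{bmatrix}=D+S,
\]
where $S$ is skew-symmetric. Compare the operator $H$ of Lemma~\ref{lem:thirdrel} with $H'$. The constant $\Phi(z_a)$ does not affect any difference $H'(z_b)-H'(z_a)$. The remaining gap is $\frac{1}{2}(1-\frac1\tau)Sh$, and since $S$ is skew,
\[
\ang{Sh_b-Sh_a,\,h_b-h_a}=0 \quad\text{for all } h_a,h_b.
\]
So the quadratic forms $\ang{H'(h_b)-H'(h_a),h_b-h_a}$ and $\ang{H(h_b)-H(h_a),h_b-h_a}$ coincide. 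Equivalently, $u^\top\nabla H'(h)u=u^\top\nabla H(h)u$ for every $u$.

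\textbf{Transferring the bounds.} Definition~\ref{def:smooth} and Definition~\ref{def:stronglymono} depend on the mirror operator only through $\ang{H(z_b)-H(z_a),z_b-z_a}$. Therefore both the relative smoothness constant $\frac{\tau+1}{\tau-1}$ and the strong monotonicity constant $1$ from Lemma~\ref{lem:thirdrel} transfer verbatim from $H$ to $H'$.

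\textbf{Main obstacle.} The statement writes the model operator as $\nabla^2\Phi(z_a)[h,h]+\frac M2\nabla_h d_4(h)$, whereas Lemma~\ref{lem:thirdrel} uses $\frac12\nabla^2\Phi(z_a)[h,h]+2M\nabla_h d_4(h)$. For this $f$, the second derivative of $\Phi$ equals $(\nabla^3\alpha(x_a)[h_x,h_x],\,\nabla^3\beta(y_a)[h_y,h_y])$, since the bilinear part contributes nothing beyond first order. I would treat the constants as those of Definition~\ref{def:third_order_sub}, which is the model actually being solved, and apply Lemma~\ref{lem:thirdrel} directly.

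If the constants genuinely differ, I would redo the Jacobian comparison by hand. The skew part cancels in all quadratic forms, so it suffices to bound the symmetric parts
\[
D+\nabla^2\Phi(z_a)[h]+c\,\nabla^2 d_4(h)
\]
against $\frac{1-1/\tau}{2}D+\frac{M-\tau L_3}{2}\nabla^2 d_4(h)$. The tool is the third-order smoothness estimate
\[
\bigl|u^\top\nabla^2\Phi(z_a)[h]u\bigr|\le \tfrac{1}{\tau}u^\top D u+\tau L_3\,u^\top\nabla^2 d_4(h)u,
\]
obtained via Cauchy--Schwarz/AM--GM. This is the same estimate used in Lemma~\ref{lem:thirdrel}, and the bookkeeping of these constants is the step most likely to need care.
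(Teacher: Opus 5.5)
Your proposal is correct and follows the paper's proof. Both write $\nabla\Phi(z_a)$ as a symmetric block-diagonal part plus a skew part, note that the skew part leaves every quadratic form $\ang{H(z_b)-H(z_a),z_b-z_a}$ unchanged so the constants of Lemma~\ref{lem:thirdrel} pass from $H$ to $H'$, and exhibit a potential for $H'$ to get conservativeness. Your potential is more careful than the paper's: it includes the $\frac{M-\tau L_3}{2}d_4$ term, which the paper omits, and uses the correct factor $\frac{1-1/\tau}{4}$. Reading the model-operator constants as those of Lemma~\ref{lem:thirdrel} is exactly what the paper implicitly does.
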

We note that, using similar techniques as in~\cite{lin2022explicit}, the mirror-free updates (involving $\text{Prox}^H$ and $\text{Prox}_{SM}^H$) can be shown to converge super-linearly when used for solving the third-order sub-problem described above. Additional discussion and analysis is provided in Appendix~\ref{app:updates}. We now present another example of a pair of relatively smooth and strongly monotone operators for which the MFMP-SM algorithm can be utilized to perform a higher-order update. We consider the competitive gradient optimization (CGO) algorithm \citep{vyas2023competitive} (which generalizes \citep{schafer2019competitive}). To obtain the next iterate point $z_b$ at any iteration point $z_a$, CGO finds $z_{b}=z_a+h$ such that $\Phi_\alpha(h,z_a) = F(z_a)+\frac{\alpha}{\eta}(\nabla_{xy} f(x,y)|_{(x,y)=z_a} h_y,-\nabla_{yx} f(x,y)|_{(x,y)=z_a} h_x)+\frac{1}{\eta}h=0$ where $F(z) = (\nabla_x f(x,y),-\nabla_y f(x,y))$.
\begin{lemma}\label{lem:cgo}
         The operator $\Phi_\alpha(h,z_a)$ is 1-relatively smooth and 1-strongly monotone with respect to $\Phi_0(h,z_a) = F(z_a)+\frac{1}{\eta} h$. Furthermore we have that $\Phi_0$ is monotone.
\end{lemma}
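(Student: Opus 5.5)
We will verify the three claims directly from the definitions, working in the variable $h$ with $z_a$ fixed. Write $B = \nabla_{xy} f(x,y)|_{(x,y)=z_a}$, so that $\nabla_{yx} f|_{z_a} = B^\top$. Define the linear map $J(h) = (B h_y, -B^\top h_x)$. Then
\begin{align*}
\Phi_\alpha(h,z_a) = F(z_a) + \tfrac{\alpha}{\eta} J(h) + \tfrac{1}{\eta} h, \qquad \Phi_0(h,z_a) = F(z_a) + \tfrac{1}{\eta} h .
\end{align*}
The key structural observation is that $J$ is skew-symmetric. In block form,
\begin{align*}
J = \begin{bmatrix} 0 & B \\ -B^\top & 0 \end{bmatrix}, \qquad J^\top = -J,
\end{align*}
and therefore $\ang{J(u), u} = u_x^\top B u_y - u_y^\top B^\top u_x = 0$ for every $u$.

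With this in hand, for any $h_a, h_b$ set $u = h_b - h_a$. The constant $F(z_a)$ cancels in both differences, and linearity gives
\begin{align*}
\ang{\Phi_\alpha(h_b,z_a) - \Phi_\alpha(h_a,z_a), u} = \tfrac{\alpha}{\eta}\ang{J u, u} + \tfrac{1}{\eta}\norm{u}^2 = \tfrac{1}{\eta}\norm{u}^2,
\end{align*}
and likewise
\begin{align*}
\ang{\Phi_0(h_b,z_a) - \Phi_0(h_a,z_a), u} = \tfrac{1}{\eta}\norm{u}^2 .
\end{align*}
The two quantities are equal. Hence the inequality in the definition of operator relative smoothness \eqref{def:smooth} holds with $L = 1$, and the inequality in Definition~\ref{def:stronglymono} holds with $m = 1$, both with equality. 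Equivalently, by Lemma~\ref{lem:jacobrels} the Jacobians $\tfrac{1}{\eta}I + \tfrac{\alpha}{\eta}J$ and $\tfrac{1}{\eta}I$ define the same quadratic form, since the skew part contributes nothing.

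Monotonicity of $\Phi_0$ follows from the same computation: $\tfrac{1}{\eta}\norm{u}^2 \ge 0$ whenever $\eta > 0$, so Definition~\ref{def:monotone} is satisfied.

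The only step that needs care is the skew-symmetry of $J$, which requires identifying $\nabla_{yx} f$ with the transpose of $\nabla_{xy} f$. This holds when $f$ is twice continuously differentiable, by symmetry of mixed partials, and I will state that assumption explicitly. I also assume $\eta > 0$ so that the step size is positive.
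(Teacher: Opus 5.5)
Your proof is correct and matches the paper's: the paper also takes the difference $\Delta = \Phi_\alpha - \Phi_0$, uses skew-symmetry to show $\langle \Delta(h_2)-\Delta(h_1), h_2-h_1\rangle = 0$ so that the two monotonicity inner products coincide (giving both constants equal to $1$), and gets monotonicity of $\Phi_0$ from its being a constant plus a multiple of the identity. Your explicit assumptions, namely $f \in C^2$ so that $\nabla_{yx} f = (\nabla_{xy} f)^\top$, and $\eta > 0$, are exactly what the paper uses implicitly.
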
\vspace{-6pt}
This fact allows us to solve the higher order CGO update by solving for the first-order operator $\Phi_0$. The proofs of the statements in this section are provided in Appendix \ref{app:sec4}.
\section{Examples and Analysis}\label{sec:exanples}

In this section, we discuss pairs of relatively-smooth and strongly-monotone operators, in addition to presenting an analysis of the relatively-Lipschitz condition.

\subsection{Examples of relatively smooth operators}

\begin{example}\label{eg:smooth}
    Consider the operators $F : \mathbb{R}^{2n} \rightarrow \mathbb{R}^{2n}$ and $H : \mathbb{R}^{2n} \rightarrow \mathbb{R}^{2n}$,
    \begin{align*}
        F(x,y) = \left( \nabla_x f_s(x,y), -\nabla_y f_s(x,y) \right),~~~
        H(x,y) = \left( \nabla_x h_s(x,y), -\nabla_y h_s(x,y) \right)
    \end{align*}
    where $f_s(x,y) = f(x) - f(y) + x^\top B y$ and $h_s(x,y) = h(x) - h(y)$, for
    \begin{align*}
        f(x) = \frac{1}{4}\|Ex\|_2^4 + \frac{1}{4}\|Ax - b\|_4^4 + \frac{1}{2}\|Cx - d\|_2^2,~~~~~h(x) = \frac{1}{4}\|x\|_2^4 + \frac{1}{2}\|x\|_2^2,
    \end{align*}

    where $A,B,C,E \in \mathbb{R}^{n\times n}$ and $b,d\in \mathbb{R}^n$. Then, the operator $F$ is L-relatively smooth and $m$-relatively strongly monotone with respect to an operator $H: \mathbb{R}^{2n} \rightarrow \mathbb{R}^{2n}$, $H(x,y) = (\|x\|^2 x + x, \|y\|^2 y + y)$
    with $m = \min \left\{ \frac{\sigma_E^4}{3}, \sigma_C^2 \right\}$ and $L = 3\|E\|^4 + 3\|A\|^4 + 6\|A\|^3\|b\|_2^2 + 3\|A\|^2\|b\|_2^2 + \|C\|^2$ and where $\sigma_E = \min(\Lambda(E^\top E))$ and $\sigma_C = \min(\Lambda(C^\top C))$, and $\Lambda(M)$ denotes the set of eigenvalues of the matrix $M$.
\end{example}

Note that setting \( B = 0 \) makes the problem separable, and because both operators \( F \) and \( H \) are involved, the error term vanishes, leading to \( E = 0 \), as shown in Corollary \ref{cor:final}. Furthermore, in this scenario, the relative operator \( H \) is conservative since it represents the gradient of the function \( h_m = h(x) + h(y) \). Thus, from Corollary~\ref{cor:final}, the error terms related to the loop integrals of \( H \) vanish, resulting in $E = \sum_{k=1}^K L \oint_{P_{z_k z_{k + \frac{1}{2}} z_{k+1}}} F^T(r) \, dr.
$ It is known that in the separable case, the operator \( F \) is smooth and strongly monotone with respect to \( H \). Additionally, since \( F \) is conservative, it satisfies the relative Lipschitz condition for operators. This implies that the problem can also be solved by running the Mirror Prox-SM (Algorithm 3) from \cite{cohen2020relative}.

\begin{example}\label{eg:eg2}
 Upon running MFMP-SM (Algorithm \ref{ALG:2}) on the third-order sub-problem (\ref{def:third_order_sub}) for the operator $F(x,y) = (\nabla_x f(x,y),-\nabla_y f(x,y))$, where $f(x,y) = \|x\|^4-\|y\|^4+x^\top A y$, the last iterate $z_K = (x_K,y_K)$ satisfies,
\begin{align*}
    &\ang{F(z_K), z_K - z} \leq 2G \sqrt{\frac{\omega_H(z_K, z^*)}{\kappa(\tau, z_a)}}\leq \frac{2G}{\sqrt{\frac{1 - \tau}{2} \kappa(z_a)}}\sqrt{
        \left( \frac{L}{m + L} \right)^K \!\!\!\omega(z^*, z_0)
        + \sum_{k=1}^K \frac{E_k}{(m + L)^k}
    } 
\end{align*}
where $E_k = L\oint_{P_{z_kz_{k+1}z_{k+\frac{1}{2}}}} F^\top(r) dr$ and  $\kappa(z_a) = \min(\|x_a\|,\|y_a\|)$.
\end{example}

\subsection{Analyzing Relative Lipschitzness}
In the examples considered in the previous section, we find that the relative operator $H$ corresponding to the main operator $F$ is conservative. If the relative operator is conservative the notions of relative Lipschitzness and relative smoothness are equivalent to operator relative Lipschitzness and operator relative smoothness respectively. If the operator $F$ satisfies the relative Lipschitzness condition we can potentially use the strongly-monotone mirror prox algorithm from \cite{cohen2020relative} with the appropriate mirror map $\phi$. However, we show that neither examples satisfy the relative Lipschitzness condition.

\begin{theorem}\label{lem:antilip}
    For an operator $F$ that is $L$-relatively smooth and $m$ strongly-monotone with respect to $H$, we have
$$L-m \geq \frac{\oint_{P_{abc}}F^\top dr-\omega_F(z_a,z_c)}{\omega_H(z_a,z_b)+\omega_H(z_b,z_c)}.$$
\end{theorem}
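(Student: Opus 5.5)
The plan is to combine two ingredients already available in Section~\ref{sec:prelims}: the upper bound of Lemma~\ref{lem:relopsmooth}, which comes from relative smoothness, and a lower bound on generalized Bregman divergences, which comes from relative strong monotonicity. The three-point identity of Lemma~\ref{lem:3-point}, applied to $F$, will link the two. Throughout, write $\Sigma=\omega_H(z_a,z_b)+\omega_H(z_b,z_c)$. The target inequality is equivalent to
\[
(L-m)\,\Sigma \;\ge\; \oint_{P_{z_az_bz_c}}F^\top dr-\omega_F(z_a,z_c),
\]
and dividing by $\Sigma$ is legitimate when $\Sigma>0$. Here $H$ is monotone, since $m\langle H(z_b)-H(z_a),z_b-z_a\rangle\ge \langle F(z_b)-F(z_a),z_b-z_a\rangle\ge0$ for $m>0$ and monotone $F$. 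Hence $\Sigma\ge 0$ by Lemma~\ref{lem:posgbd}, and the degenerate case $\Sigma=0$ is excluded or read with the convention that the quotient is not required.

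\textbf{Step 1: lower bound from strong monotonicity.} I first prove the analogue of Lemma~\ref{lem:gbdrelopsmooth} with the inequality reversed: $\omega_F(z',z)\ge m\,\omega_H(z',z)$ for all $z,z'$. The argument copies the one for Lemma~\ref{lem:gbdrelopsmooth}. Write
\[
\omega_F(z',z)-m\,\omega_H(z',z)=\int_0^1\big\langle (F-mH)(z+t(z'-z))-(F-mH)(z),\,z'-z\big\rangle\,dt .
\]
Each integrand equals $\tfrac1t\langle (F-mH)(w_t)-(F-mH)(z),w_t-z\rangle$ with $w_t=z+t(z'-z)$. This is nonnegative by Definition~\ref{def:stronglymono}. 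Summing the two instances gives $\omega_F(z_a,z_b)+\omega_F(z_b,z_c)\ge m\Sigma$.

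\textbf{Step 2: upper bound via the three-point identity and Lemma~\ref{lem:relopsmooth}.} I apply Lemma~\ref{lem:3-point} to $F$, relabelling the points so that the left side is $\omega_F(z_a,z_b)+\omega_F(z_b,z_c)$. The right side then contains $\omega_F(z_a,z_c)$, a loop integral of $F$ around the triangle through $z_a,z_b,z_c$, and the inner product $\langle F(z_c)-F(z_b),z_a-z_b\rangle$. That inner product is exactly the one controlled by Lemma~\ref{lem:relopsmooth}:
\[
\langle F(z_c)-F(z_b),z_a-z_b\rangle\le L\Sigma-\oint_{P_{z_az_bz_c}}F^\top dr .
\]
Substituting this gives $\omega_F(z_a,z_b)+\omega_F(z_b,z_c)\le L\Sigma+\omega_F(z_a,z_c)-c\oint_{P_{z_az_bz_c}}F^\top dr$, where $c$ is a coefficient determined by the orientations. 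Chaining with Step 1 gives $(L-m)\Sigma\ge c\oint F^\top dr-\omega_F(z_a,z_c)$, which rearranges to the statement.

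\textbf{Main obstacle: orientation bookkeeping in Step 2.} After the relabelling, the loop produced by Lemma~\ref{lem:3-point} is traversed as $z_a\to z_c\to z_b$. That is the reverse of $P_{z_az_bz_c}$, so its integral is $-\oint_{P_{z_az_bz_c}}F^\top dr$. It must then be combined with the loop term already present in Lemma~\ref{lem:relopsmooth}. I will carefully recompute Lemma~\ref{lem:3-point} directly from Definition~\ref{def:gbd} by splitting each straight-line integral. This verifies which orientation convention makes the net loop coefficient $c$ come out as stated. If the two loop terms add rather than cancel, the honest conclusion carries that combined coefficient on the loop term. In that case I would instead rederive Lemma~\ref{lem:relopsmooth} in the form that directly isolates $\langle F(z_c)-F(z_b),z_a-z_b\rangle$, and redo the chain with it.
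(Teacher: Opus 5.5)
\textbf{Gap in Step 2.} Your Step~1 is correct and is the same strong-monotonicity step the paper uses. Your orientation bookkeeping is also right: with $P_{z_az_bz_c}$ traversed $z_a\to z_b\to z_c\to z_a$, Lemma~\ref{lem:3-point} gives
\[
\langle F(z_c)-F(z_b),z_a-z_b\rangle=\omega_F(z_a,z_b)+\omega_F(z_b,z_c)-\omega_F(z_a,z_c)+\oint_{P_{z_az_bz_c}}F^\top dr .
\]
The problem is Step~2, and bookkeeping cannot fix it.

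Lemma~\ref{lem:relopsmooth} contains nothing beyond Lemma~\ref{lem:gbdrelopsmooth}. It is just $L\Sigma\ge\omega_F(z_a,z_b)+\omega_F(z_b,z_c)$ rewritten through this same identity, with $\omega_F(z_a,z_c)\ge0$ dropped. Derived consistently, its loop term is $-\oint_{P_{z_az_bz_c}}F^\top dr$, not the printed $+$.

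Feeding it back into the identity is therefore circular. The loop integrals cancel exactly, and the best you can get is $\omega_F(z_a,z_b)+\omega_F(z_b,z_c)\le L\Sigma$, hence only $(L-m)\Sigma\ge0$. The coefficient $c=2$ you would obtain by using the lemma verbatim is an artifact of that sign slip.

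In fact no argument from relative smoothness can work. Take $F(x,y)=(E^4x^3+By,\,E^4y^3-Bx)$ and $H(x,y)=(x^3,y^3)$ on $\mathbb{R}^2$ with $B>0$, essentially the planar instance in the proof of Corollary~\ref{cor:norelip1}.
\begin{itemize}
\item The bilinear part drops out of $\langle F(z')-F(z),z'-z\rangle$, so $F$ is relatively smooth and strongly monotone with $L=m=E^4$.
\item With $z_a=(\theta,\theta)$, $z_b=(0,0)$, $z_c=(0,\theta)$, one computes $\oint_{P_{z_az_bz_c}}F^\top dr=B\theta^2$, $\omega_F(z_a,z_c)=E^4\theta^4/4$ and $\Sigma=5\theta^4/4$.
\item The right-hand side is then $\frac{4B}{5\theta^2}-\frac{E^4}{5}$, which is unbounded as $\theta\to0$, while $L-m=0$.
\end{itemize}

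\textbf{Missing ingredient.} What is needed is relative Lipschitzness (Definition~\ref{def:reloplip}), which is what the paper's proof actually invokes (``if the operator is also relative-Lipschitz''): $L\Sigma\ge\langle F(z_c)-F(z_b),z_a-z_b\rangle$, with no loop term. Inserting your Step~1 bound into the identity above gives
\[
\langle F(z_c)-F(z_b),z_a-z_b\rangle\ge m\Sigma-\omega_F(z_a,z_c)+\oint_{P_{z_az_bz_c}}F^\top dr .
\]
Chaining this with the Lipschitz inequality yields the claim immediately. So the theorem should be read with ``relatively Lipschitz'' in place of ``relatively smooth''. It is a necessary condition for relative Lipschitzness, which is exactly how Corollary~\ref{cor:norelip1} uses it.

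\textbf{Side slip.} Your argument that $H$ is monotone runs the strong-monotonicity inequality backwards. The inequality $\langle F(z_b)-F(z_a),z_b-z_a\rangle\ge m\langle H(z_b)-H(z_a),z_b-z_a\rangle$ bounds the $H$-term from above, not below. Monotonicity of $H$, and hence $\Sigma\ge0$, comes from the upper-bound hypothesis instead: relative Lipschitzness with $z_c=z_a$ reduces to relative smoothness.
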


We now proceed to analyze the aforementioned examples.
\begin{corollary}\label{cor:norelip1}
    The operator pair $F$ and $H$ in Examples \ref{eg:smooth} and \ref{eg:eg2} do not satisfy the operator relative Lipschitzness condition (Definition \ref{def:reloplip}) for all possible parameters.
\end{corollary}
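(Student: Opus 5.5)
The approach is to argue by contradiction, using Theorem~\ref{lem:antilip} as a necessary condition. Suppose $F$ were $L'$-relatively Lipschitz with respect to $H$ for some constant $L'$. Then Definition~\ref{def:reloplip} gives a uniform lower bound on $L'$: for every triple $z_a,z_b,z_c$,
\[
L' \geq \frac{\ang{F(z_c)-F(z_b),z_a-z_b}}{\omega_H(z_b,z_c)+\omega_H(z_a,z_b)}.
\]
It therefore suffices to exhibit, for each example, a family of triples along which this ratio is unbounded. The denominator is nonnegative because $H$ is monotone (Lemma~\ref{lem:posgbd}).

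To produce such triples, I would re-run the algebra behind Theorem~\ref{lem:antilip}. Lemma~\ref{lem:3-point} applied to $F$ rewrites the numerator as
\[
\oint_{P_{z_az_bz_c}}F^\top dr+\omega_F(z_a,z_b)-\omega_F(z_a,z_c)-\omega_F(z_c,z_b).
\]
Only the loop integral can make the ratio blow up, since the $\omega_F$ terms are controlled by $L\,\omega_H$ through Lemma~\ref{lem:gbdrelopsmooth}. In both examples $H$ is conservative (it is the gradient of $h(x)+h(y)$, or of the $d_4$ and quadratic potential), so $\omega_H$ is an ordinary Bregman divergence. The bilinear coupling $x^\top By$ (respectively $x^\top Ay$) contributes a nonconservative part $(By,-B^\top x)$ to $F$. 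The loop integral of this part over a triangle equals $2\langle$signed area of the triangle projected onto the $(x,y)$ coupling plane$\rangle$ weighted by $B$. The conservative parts of $F$ contribute nothing to the loop integral.

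The concrete choice is a small triangle near the origin. Take $z_b=0$, $z_a=\epsilon(u,0)$ and $z_c=\epsilon(0,v)$ with $u^\top Bv\neq 0$ and signs chosen so the loop term is positive.
\begin{itemize}
\item In Example~\ref{eg:eg2}, with $H$ built from $d_4$ (or $\|\cdot\|^4$) plus possibly a degenerate quadratic, $\omega_H$ scales like $\epsilon^4$ in the directions where the quadratic part vanishes. The loop integral scales like $\epsilon^2$, so the ratio grows like $\epsilon^{-2}\to\infty$.
\item In Example~\ref{eg:smooth} the quadratic term in $h$ makes $\omega_H\sim\epsilon^2$, so small triangles give only a bounded ratio. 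Instead I would use triangles scaled large and placed where the quartic terms of $h$ are dominated. Since $h$ is quartic the denominator eventually wins at scale, so that does not work directly either. The better route is to use "for all possible parameters" as a choice of parameters: choose $\|B\|$ large relative to the constant $L$ of Example~\ref{eg:smooth}, which depends only on $A,C,E,b$. The loop term $\propto\|B\|\epsilon^2$ then exceeds $L'\cdot O(\epsilon^2)$ for any fixed $L'$ that is independent of $B$.
\end{itemize}
So in both cases the conclusion is that no relative Lipschitz constant expressible in the examples' given constants (in particular $L$) works for all parameter choices.

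The main obstacle is getting the numerator sign and order right. One must check that the $\omega_F$ differences stay $O(\omega_H)$, so they cannot cancel the loop term. This follows from Lemma~\ref{lem:gbdrelopsmooth} together with $\omega_F\geq 0$ (Lemma~\ref{lem:posgbd}, since $F$ is monotone). With those two facts, $\omega_F(z_a,z_b)-\omega_F(z_a,z_c)-\omega_F(z_c,z_b)\geq -L\,\omega_H(z_a,z_c)-L\,\omega_H(z_c,z_b)$, and both terms are of the same order in $\epsilon$ as the denominator. Equivalently, one can invoke Theorem~\ref{lem:antilip} and observe that its right-hand side, and hence the needed Lipschitz constant, is unbounded along these triples.
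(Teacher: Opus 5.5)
\paragraph{Example~\ref{eg:eg2}.} Here your argument is the paper's: a small triangle at the origin of the subproblem, where $\omega_H=\Theta(\epsilon^4)$ while the loop integral of the bilinear part is $\Theta(\epsilon^2)$, fed into Theorem~\ref{lem:antilip} or directly into Definition~\ref{def:reloplip}. Two repairs are needed.

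First, you must put the Taylor center of the subproblem at $z_a=0$, as the paper does. Since $\nabla^2\|x\|^4=8xx^\top+4\|x\|^2I\succ 0$ for $x\neq 0$, the quadratic part of $\omega_H$ vanishes on both legs $(u,0)$ and $(0,v)$ only when $x_a=y_a=0$. At a center with $x_a\neq 0\neq y_a$ one has $\omega_H=\Theta(\epsilon^2)$ on your triangle, so the ratio stays bounded, exactly as in Example~\ref{eg:smooth}.

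Second, your three-point rewriting is wrong. Lemma~\ref{lem:3-point} with $z_b$ and $z_c$ interchanged (and the loop reversed) gives
\[
\langle F(z_c)-F(z_b),z_a-z_b\rangle=\oint_{P_{z_az_bz_c}}F^\top dr+\omega_F(z_a,z_b)+\omega_F(z_b,z_c)-\omega_F(z_a,z_c),
\]
with $+\omega_F(z_b,z_c)$ where you wrote $-\omega_F(z_c,z_b)$. Both divergences are nonnegative, so your expression is a lower bound for the true numerator. Your estimate therefore survives, and the term $-L\,\omega_H(z_c,z_b)$ can simply be dropped.

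With these fixes you obtain what the paper obtains: no finite constant works. That is stronger than the uniform-in-parameters statement in your final summary.

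\paragraph{Example~\ref{eg:smooth}.} Here you genuinely depart from the paper. The paper sets $A=C=b=d=0$, takes $z_a=(\theta,\theta)$, $z_b=0$, $z_c=(0,\theta)$, and gets $L-m\ge (B\theta^2-E^4\theta^4)/(\frac54\theta^4)\to\infty$. But it does so with $H(x,y)=(\|x\|^2x,\|y\|^2y)$, i.e.\ it drops the $+x,+y$ terms of the $H$ stated in the example.

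Your observation that the small-triangle ratio stays bounded with the stated $H$ is correct, and more is true. Because $h$ is $1$-strongly convex, $\omega_H(z,w)\ge\frac12\|z-w\|^2$. Hence the bilinear part $Nz=(By,-B^\top x)$ satisfies
\[
\langle N(z_c-z_b),z_a-z_b\rangle\le\|B\|\,\|z_c-z_b\|\,\|z_a-z_b\|\le\|B\|\bigl(\omega_H(z_b,z_c)+\omega_H(z_a,z_b)\bigr).
\]
For the conservative part $F_C=\nabla(f(x)+f(y))$ the loop term vanishes. The identity above, $\omega_{F_C}(z_a,z_c)\ge 0$, and Lemma~\ref{lem:gbdrelopsmooth} then give
\[
\langle F_C(z_c)-F_C(z_b),z_a-z_b\rangle\le\omega_{F_C}(z_a,z_b)+\omega_{F_C}(z_b,z_c)\le L\bigl(\omega_H(z_a,z_b)+\omega_H(z_b,z_c)\bigr).
\]
Hence $F$ is $(L+\|B\|)$-relatively Lipschitz with respect to the stated $H$. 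The paper's ``no $L$ exists'' conclusion for this example holds only for its modified, purely quartic $H$.

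Your conclusion is that no constant independent of $B$, in particular not $L$, works for all parameters. That is the most one can prove for the example as written. You should say explicitly that this weaker reading is what you establish for Example~\ref{eg:smooth}.
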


This impossibility occurs due to the difference in the order of terms in the numerator and denominator (when they are parametrized with a single variable $\theta$) which arises from the non-conservative parts of the operator $F$. A figure of the path $P_{abc}$ used in the proof is provided in Figure \ref{figure:relLip}. The proofs are provided in Appendix \ref{app:sec5}.

\begin{figure}[h]
    \centering
    \includegraphics[width=50mm]{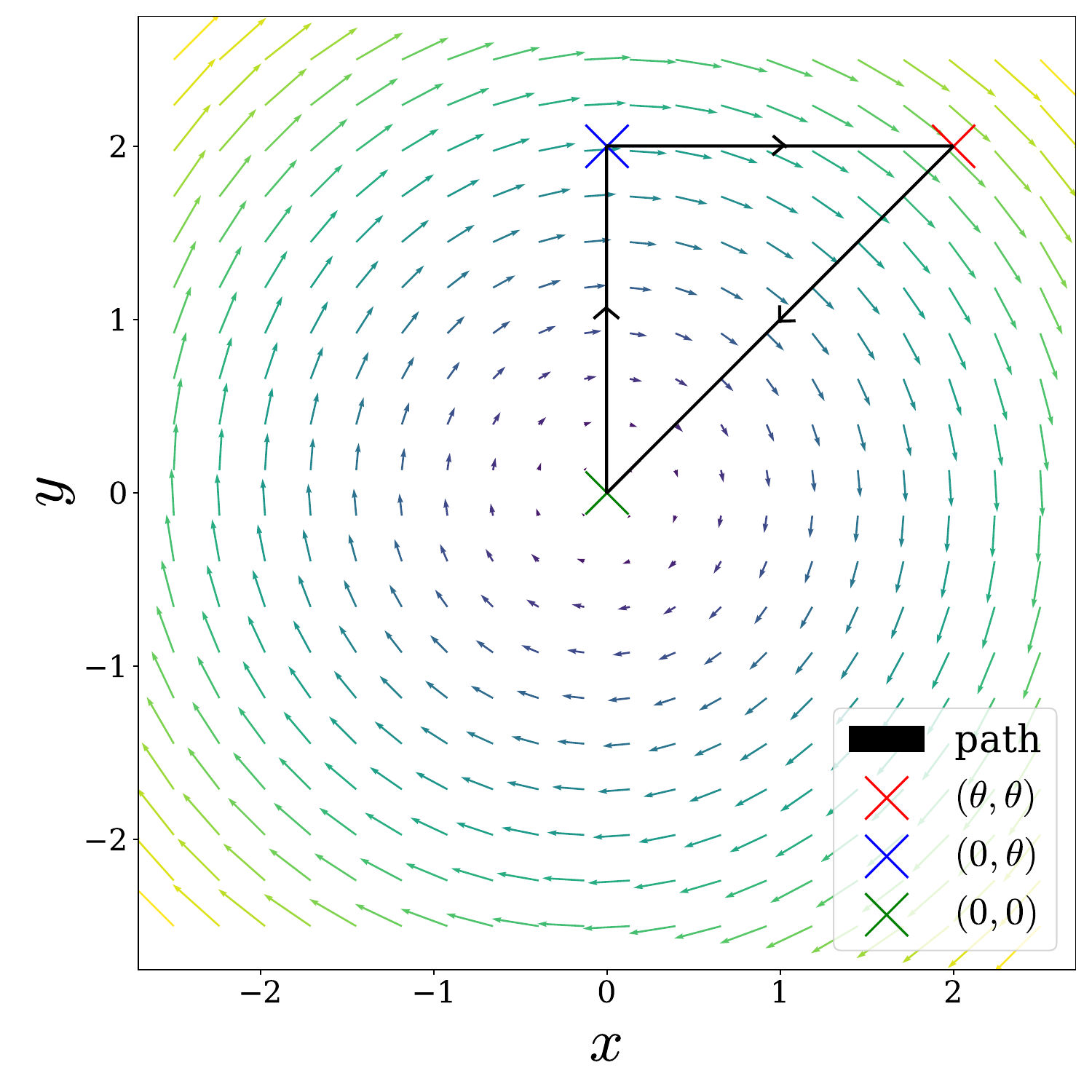}
    \caption{An illustration of the construction used to prove Corollary \ref{cor:norelip1}.}
    \label{figure:relLip}
\end{figure}
\section{Conclusion}

        We have established a \emph{mirror-free} variant of the mirror prox algorithm, based in part on a certain generalized Bregman divergence defined with respect to an operator, rather than any particular mirror map. In addition, we have analyzed our method under the conditions of relative smoothness and relative Lipschitzness, and our rates provide a natural interpolation between conservative and non-conservative operators. Analyzing the method under additional conditions, such as for weakened notions of monotonicity, could provide interesting directions for future works. Finally, we believe our techniques may provide new opportunities for exploring directions in min-max optimization and variational inequalities in settings beyond those based on more standard assumptions.

\bibliography{mirrorfree}
\bibliographystyle{plainnat}

\newpage

\newpage
\appendix
\addcontentsline{toc}{section}{Appendix}
\part{Appendix} 
\parttoc 
\section{Proofs for Section \ref{sec:prelims}}\label{app:sec2}

\textbf{Proof of Lemma \ref{lem:3-point}}
\begin{proof}
By Definition~\ref{def:gbd}, we have that
\begin{align*}
        \omega_H(z_a,z_c)+\omega_H(z_c,z_b) -\omega_H(z_a,z_b) 
        &= {\int_{z_c}^{z_a}} H^Tdr -\langle H(z_c), z_a-z_c \rangle+{\int_{z_b}^{z_c}} H^Tdr -\langle H(z_b), {z_c}-z_b \rangle\\
        &\quad\quad- {\int_{z_b}^{z_a}} H^Tdr -\langle H(z_b), z_a-{z_b} \rangle\\
        &= \oint_{P_{z_az_bz_c}} H^Tdr+ \langle H({z_b})-H({z_c}),z_a-z_c\rangle.
\end{align*} 
where the last equality holds due to the fact that $-{\int_{z_b}^{z_a}} H^Tdr  = {\int_{z_a}^{z_b}} H^Tdr$ and $P_{z_az_bz_c}$ denotes a path of three line segments, $z_a\rightarrow z_b$, $z_b\rightarrow z_c$, and $z_c\rightarrow z_a$.
\end{proof}

\textbf{Proof of Lemma \ref{lem:jacobmon}}
\begin{proof}
    From \cite{rockafellar2009variational}, Proposition 13.2, we have that the symmetric part of the Jacobian of the operator $F$ is positive semi-definite, i.e. $h^\top (\frac{\nabla F(z)+\nabla F(z)^\top}{2})h \geq 0~\forall h,z \in \cz$ if and only if the operator $F$ is monotone. But we have $h^\top (\frac{\nabla F(z)+\nabla F(z)^\top}{2})h \geq 0~\forall h,z \in \cz$ if and only if $0 \leq h^\top \nabla F(z) h ~\forall h\in \cz$ since $h^\top\left(\frac{\nabla F(z)-\nabla F(z)^\top}{2}\right) h = 0~\forall~h,z\in \cz$, and so the desired equivalence follows.
\end{proof}

\textbf{Proof of Lemma \ref{lem:posgbd}}
\begin{proof}
For any $t \geq 0$, we have that, for monotone operator $F$,
\begin{equation*}\ang{F(z_a+t(z_b-z_a))-F(z_a),t(z_b-z_a)} \geq 0 \implies \ang{F(z_a+t(z_b-z_a))-F(z_a),(z_b-z_a)} \geq 0,
    \end{equation*}
    and so it follows that
    \begin{align*}
    &\omega_F(z_b,z_a) = \int_0^1 \ang{F(z_a+t(z_b-z_a))-F(z_a),z_b-z_a} dt\geq 0.\qedhere 
    \end{align*}
\end{proof}

\textbf{Proof of Lemma \ref{lem:jacobrels}}
\begin{proof}
Since $F$ is $L$-relatively smooth with $H$ we have that the operator $\Phi(z) = LH(z)-F(z)$ is monotone. From Lemma \ref{lem:jacobrels} we have that $h^\top\nabla \Phi(z)h \geq 0~\forall~h\in \cz$. Since $\nabla \Phi(z) =  L\nabla H(z)-\nabla F(z)$ we complete the proof.
\end{proof}

\textbf{Proof of Lemma \ref{lem:gbdrelopsmooth}}
\begin{proof}
    From Eq.~\eqref{def:smooth} for $t \geq 0$, we obtain
    \begin{align*}
        \ang{F(z_a)-F(z_a+t(z_b-z_a)),z_b-z_a} \leq L \ang{H(z_a)-H(z_a+t(z_b-z_a)),z_b-z_a}.
    \end{align*}

    Integrating, we have
    \begin{align*}
        \int_0^1 \ang{F(z_a)-F(z_a+t(z_b-z_a)),z_b-z_a} dt\leq 
        L \int_0^1 \ang{H(z_a)-H(z_a+t(z_b-z_a)),z_b-z_a}dt.
    \end{align*}

    However, we note that, by Definition~\ref{def:gbd},
    \begin{align*}
    \int_0^1 \ang{F(z_a)-F(z_a+t(z_b-z_a)),(z_a-z_b)} dt 
    &=  \int_0^1 F(z_a+t(z_b-z_a))^\top(z_a-z_b)dt- \ang{F(z_a),z_b-z_a}\\ 
    &= \omega_F(z_b,z_a).
    \end{align*}

    Observing the same for the operator $H$, we have

    $$\omega_F(z_b,z_a) \leq L \omega_H(z_b,z_a) ~~ \forall z_a,z_b\in \cz,$$

    which is the statement of the lemma.
\end{proof}

\textbf{Proof of Lemma \ref{lem:relopsmooth}}

\begin{proof}
    By the generalized Bregman three-point property,
    \begin{align*}
            \omega_F(z,z_K)& = \omega_F(z^*,z_K)-\omega_F(z^*,z)+\oint_{P_{zz^*z_K}}F^Tdr+\ang{F(z_k)-F(z),z^*-z}\\
                           & \leq \oint_{P_{zz^*z_K}}F^Tdr-\int_{z}^{z^*}F^Tdr+\ang{F(z_k),z^*-z}+L\omega(z^*,z_K)
    \end{align*}

    where in the second step we expand $\omega_F(z^*,z)$ and use the fact that $\omega_F \leq L\omega$. Similarly expanding $\omega_F(z,z_K)$ we have,

    \begin{align*}
            \ang{F(z_K),z_K-z} & \leq \oint_{P_{zz^*z_K}}F^Tdr-\int_{z}^{z^*}F^Tdr-\int_{z_K}^zF^Tdr+\ang{F(z_k),z^*-z}+L\omega(z^*,z_K)\\
                               & = \int_{z^*}^{z_K}F^Tdr+\ang{F(z_K),z^*-z}+L\omega(z^*,z_K)\\
                               & = \ang{F(z^*),z_K-z^*}+\ang{F(z_K),z^*-z}+L(\omega(z^*,z_K)+\omega_H(z_K,z^*))\\
                               & \leq \ang{F(z_K),z^*-z}+L(\omega(z^*,z_K)+\omega_H(z_K,z^*))
    \end{align*}    

    rearranging, we obtain the statement of the lemma.
\end{proof}

\section{Proofs for Section \ref{sec:algorithms}}\label{app:sec3}
\textbf{Proof of Lemma \ref{lem:mfmp}}
\begin{proof}
From the algorithm, we have
\begin{equation}
\label{eqn:MPstep4}
\left\langle F(z_{k+\frac{1}{2}}) , z_{k+1} - z \right\rangle  \leq L \left\langle H(z_k) - H(z_{k+1}),  z_{k+1} - z  \right\rangle.
\end{equation}
Additionally from the algorithm we have
\begin{equation}
\label{eqn:MPstep3}
\left\langle F(z_k) , z_{k+\frac{1}{2}} - z_{k+1} \right\rangle  \leq L \left\langle H(z_k) - H(z_{k+\frac{1}{2}}),  z_{k+\frac{1}{2}} - z_{k+1}  \right\rangle
\end{equation}
Combining the two equations and rearranging, we have
\begin{align*}
\langle F(z_{k+\frac{1}{2}}),z_{k+\frac{1}{2} } -z\rangle &\leq  L \left\langle H(z_k) - H(z_{k+1}),  z_{k+1} - z  \right\rangle 
-\left\langle   F(z_k) - F(z_{k + \frac{1}{2} }) , z_{k+\frac{1}{2}} - z_{k+1} \right\rangle \nonumber 
\\
&\quad \quad + L \left\langle H(z_k) - H(z_{k+\frac{1}{2}}),  z_{k+\frac{1}{2}} - z_{k+1}  \right\rangle.\qedhere
\end{align*}
\end{proof}

\textbf{Proof of Theorem \ref{thm:1}}
\begin{proof}
Let $\Delta = LH-F$ be the difference operator. For any $K \geq k \geq 1$, it follows that, by expressing the Lemma~\ref{lem:mfmp} in terms of $\Delta$,
    \begin{align*}
    \langle F(z_{k+\frac{1}{2}}),z_{k+\frac{1}{2} } -z\rangle \nonumber 
    &\leq   \left\langle \Delta(z_k) - \Delta(z_{k+\frac{1}{2}}),  z_{k+\frac{1}{2}} - z_{k+1}  \right\rangle + L \left\langle H(z_k) - H(z_{k+1}),  z_{k+1} - z  \right\rangle
    \end{align*}
    Using the three-point property of the GBD with respect to operators $\Delta$ and $H$, respectively, we obtain
    \begin{align*}
    \langle F(z_{k+\frac{1}{2}}),z_{k+\frac{1}{2} } -z\rangle \nonumber 
    &\leq \omega_{\Delta}(z_{k+1},z_k)-\omega_\Delta(z_{k+1},z_{k+\frac{1}{2}})
    +\oint_{P_{z_kz_{k+\frac{1}{2}}z_{k+1}}} \Delta^T(r) dr -\omega_{\Delta}(z_{k+\frac{1}{2}},z_k)\\
    &\quad\quad+L(\omega_H(z,z_k)-\omega_H(z,z_{k+1})-\omega_H(z_{k+1},z_{k}))\\
    &\quad\quad+L\oint_{P_{z_kz_{k+\frac{1}{2}}z_{k+1}}} H^T(r) dr\\
    \end{align*}
     Noting that $\omega_F,\omega_H$ and $\omega_\Delta$ are positive and $\omega_\Delta \leq L\omega_H$, we obtain
    \begin{align*}
    \langle F(z_{k+\frac{1}{2}}),z_{k+\frac{1}{2} } -z\rangle \nonumber
    &\leq \oint_{P_{z_kz_{k+\frac{1}{2}}z_{k+1}}} \Delta^T(r) dr
    +L\oint_{P_{z_kz_{k+\frac{1}{2}}z_{k+1}}} H^T(r) dr\\
    &\quad\quad+L(\omega_H(z,z_k)-\omega_H(z,z_{k+1}))
    \end{align*}
     We now sum both sides of the above equation over the iterations $k$. Using the $\delta_1$ conservative and $\delta_2$ co-conservative properties of operators $F$ and $H$, we have
    \begin{align*}
    \sum_{k=1}^K \langle F(z_{k+\frac{1}{2}}),z_{k+\frac{1}{2} } -z\rangle \nonumber 
    &\leq \sum_{k=1}^K L\big(\omega_H(z,z_k)-\omega_H(z,z_{k+1}))\\
         &\quad \quad+K \delta_2+3K \delta_1
    \end{align*}
     Observing that the terms involving the general point $z$ on the right hand-side telescope, we obtain
    \begin{align*}
    \sum_{k=1}^K \langle F(z_{k+\frac{1}{2}}),z_{k+\frac{1}{2} } -z\rangle \nonumber 
    \leq K \delta_2+3KL \delta_1
    +L\big(\omega_H(z,z_1)-\omega_H(z,z_{K+1})\big)
    \end{align*}
    Dividing both sides by $K$, we obtain
    \begin{align}\label{def:prefinal1}
    \frac{1}{K}\sum_{k=1}^K \langle F(z_{k+\frac{1}{2}}),z_{k+\frac{1}{2} } -z\rangle 
    \leq  \delta_2+3L\delta_1  
    +\frac{L}{K}\big(\omega_H(z,z_1)-\omega_H(z,z_{K+1})\big) 
    \end{align}
    
    Finally, from the monotonicity of the operator $F$ and Eq.~\eqref{def:prefinal1} we have that the output $z_{out}$ of the MFMP algorithm satisfies
    \begin{align*}
        \ang{F(z),z_{out}-z} = \ang{F(z),\frac{\sum_{k=1}^K z_{k+\frac{1}{2}}}{K}-z}
        &\leq \ang{F(z_{k+\frac{1}{2}}),\frac{\sum_{k=1}^K z_{k+\frac{1}{2}}}{K}-z}\\
        &= \frac{\sum_{k=1}^K \ang{F(z_{k+\frac{1}{2}}),z_{k+\frac{1}{2}}-z}}{K}\\
        &\leq \frac{L}{K} \omega_H(z,z_1) +\delta_2+3L\delta_1,
    \end{align*}
which is the statement of the theorem.
\end{proof}

\textbf{Proof of Theorem \ref{thm:2}.}
\begin{proof}
     Applying the relative Lipschitzness property of $F$ with respect to $H$ in Lemma \ref{lem:mfmp} and applying the three-point property to the $H$ operator we obtain,

    \begin{align*}
    \langle F(z_{k+\frac{1}{2}}),z_{k+\frac{1}{2} } -z\rangle &\leq L\omega_H(z,z_k) - L\omega_H(z,z_{k+1}) 
    +L\omega_H(z_{k+1}, z_k) - L\omega_H(z_{k+1},z_k) \\
    &\quad\quad-L\omega_H(z_{k+1}, z_{k+\frac{1}{2}}) +L(\omega_H(z_{k+\frac{1}{2}},z_k)+\omega_H(z_{k+1},z_{k+\frac{1}{2}}))\\
    &\quad\quad+L\oint_{P_{z_{k+1}z_kz_{k+\frac{1}{2}}}}H^\top(r)dr-L\oint_{P_{z_{k+1}z_kz}}H^\top(r)dr- L\omega_H(z_{k+\frac{1}{2}}, z_k)
    \end{align*}

    Using the $\delta$-conservative property of $H$ and cancelling terms out we have,
 
    \begin{align*}
    \langle F(z_{k+\frac{1}{2}}),z_{k+\frac{1}{2} } -z\rangle \leq  &L\omega_H(z,z_k) - L\omega_H(z,z_{k+1}) 
    - L\omega_H(z_{k+1}, z_{k+\frac{1}{2}}) - L\omega_H(z_{k+\frac{1}{2}}, z_k)\\
    &\quad\quad+L(\omega_H(z_{k+\frac{1}{2}},z_k)+\omega_H(z_{k+1},z_{k+\frac{1}{2}}))
    +2L\delta
    \end{align*}
    
     Summing over $k$ and resolving the telescoping sequence, we have
    \begin{align*}
    \sum_{k=1}^K \langle F(z_{k+\frac{1}{2}}),z_{k+\frac{1}{2} } -z\rangle &\leq  
        L\big(\omega_H(z,z_1)-\omega_H(z,z_{K+1})\big)- L\omega_H(z_{k+1}, z_{k+\frac{1}{2}}) - L\omega_H(z_{k+\frac{1}{2}}, z_k)\\
        &\quad\quad+2KL\delta.
    \end{align*}  
    Dividing both sides by $K$, we obtain
    \begin{align*}
    \frac{1}{K} \sum_{k=1}^K \langle F(z_{k+\frac{1}{2}}),z_{k+\frac{1}{2} } -z\rangle &\leq L\big(\omega_H(z,z_1)-\omega_H(z,z_{K+1})\big)- \frac{L}{K}\big(\omega_H(z_{k+1}, z_{k+\frac{1}{2}}) + \omega_H(z_{k+\frac{1}{2}}, z_k)\big)\\
    &\quad\quad+2L\delta.
    \end{align*}   

    Furthermore, from the monotonicity of the operator $F$ we have that the output $z_{out}$ of the MFMP algorithm satisfies
    \begin{align*}
        \ang{F(z),z_{out}-z} &= \ang{F(z),\frac{\sum_{k=1}^K z_{k+\frac{1}{2}}}{K}-z}\\
        &\leq \ang{F(z_{k+\frac{1}{2}}),\frac{\sum_{k=1}^K z_{k+\frac{1}{2}}}{K}-z}\\
        &= \frac{\sum_{k=1}^K \ang{F(z_{k+\frac{1}{2}}),z_{k+\frac{1}{2}}-z}}{K}\\
        &\leq \frac{L}{K} \omega_H(z,z_1) +2L\delta,
    \end{align*}

    i.e, it approximately solves the VI objective up to error $\epsilon = O(K^{-1} + 2L\delta)$. Which is the statement of the theorem.
\end{proof}

\textbf{Proof of Theorem \ref{thm:MFMP-SM}}
\begin{proof}
From the update steps of the algorithm, at any step $k$ we have,
\begin{align*}
    \ang{F(z_k),z_{k+\frac{1}{2}}-z} &\leq \ang{L(H(z_k)-H(z_{k+\frac{1}{2}})),z_{k+\frac{1}{2}}-z}\\
    \ang{F(z_{k+\frac{1}{2}}),z_{k+1}-z} &\leq L\ang{H(z_k)-H(z_{k+1}),z_{k+1}-z}+m\ang{H(z_{k+\frac{1}{2}})-H(z_{k+1}),z_{k+1}-z}
\end{align*}
Substituting $z = z_{k+1}$ and $z = z^*$ respectively,
\begin{align*}
    \ang{F(z_k),z_{k+\frac{1}{2}}-z_{k+1}} &\leq \ang{L(H(z_k)-H(z_{k+\frac{1}{2}})),z_{k+\frac{1}{2}}-z_{k+1}}\\
    \ang{F(z_{k+\frac{1}{2}}),z_{k+1}-z^*} &\leq L\ang{H(z_k)-H(z_{k+1}),z_{k+1}-z^*}+m\ang{H(z_{k+\frac{1}{2}})-H(z_{k+1}),z_{k+1}-z^*}
\end{align*}
Adding $\ang{F(z_{k+\frac{1}{2}})-F(z_{k}),z_{k+\frac{1}{2}}-z_{k+1}}$ to each side and adding the above equations we obtain,
\begin{align}
    \ang{F(z_{k+\frac{1}{2}}),z_{k+\frac{1}{2}}-z^*} 
    &\leq \ang{L(H(z_k)-H(z_{k+\frac{1}{2}})),z_{k+\frac{1}{2}}-z_{k+1}} + L\ang{H(z_k)-H(z_{k+1}),z_{k+\frac{1}{2}}-z^*} \nonumber \\
    &\quad\quad + m\ang{H(z_{k+\frac{1}{2}})-H(z_{k+1}),z_{k+1}-z^*} 
    + \ang{F(z_{k+\frac{1}{2}})-F(z_{k}),z_{k+\frac{1}{2}}-z_{k+1}} \nonumber \\
    &\leq L(\omega_H(z_{k+1},z_{k+\frac{1}{2}})+\omega_H(z_{k+\frac{1}{2}},z_{k})) 
    - m(\omega_H(z_{k+1},z_{k+\frac{1}{2}})+\omega_H(z^*,z_{k+1})) \label{eqn:line2} \\
    &\quad\quad - \omega_H(z^*,z_{k+\frac{1}{2}})) - L(\omega_H(z^*,z_{k+1})+\omega_H(z_{k+1},z_{k})-\omega_H(z^*,z_{k})) \nonumber \\
    &\quad\quad - L(\omega_H(z_{k+1},z_{k+\frac{1}{2}})+\omega_H(z_{k+\frac{1}{2}},z_{k})-\omega_H(z_{k+1},z_{k})) \nonumber \\
    &\quad\quad + L\oint_{P_{z_{k+1}z_k z_{k+\frac{1}{2}}}} H^T(r) dr 
    + L\oint_{P_{z^*z_kz_{k+\frac{1}{2}}}} H^T(r) dr \nonumber \\
    &\quad\quad + m\oint_{P_{z^*z_{k+\frac{1}{2}}z_{k+1}}} H^T(r) dr 
    - L\oint_{P_{z_kz_{k+\frac{1}{2}}z_{k+1}}} F^T(r) dr \nonumber \\
    &= m\omega_H(z^*,z_{k+\frac{1}{2}})-m\omega_H(z_{k+1},z_{k+\frac{1}{2}}) 
    + L\omega_H(z^*,z_{k})-(m+L)\omega_H(z^*,z_{k+1})+E_k \nonumber
\end{align}
where in Eq.~\eqref{eqn:line2} we use the three-point property and $E_k$ represents the loop integrals.

Thus we have,
\begin{align*}
     (\ang{F(z_{k+\frac{1}{2}}),z_{k+\frac{1}{2}}-z^*}-m\omega_H(z^*,z_{k+\frac{1}{2}})) + (m+L)\omega_H(z^*,z_{k+1}) &\leq -m\omega_H(z_{k+1},z_{k+\frac{1}{2}})+L\omega_H(z^*,z_{k})\\
    &\leq L\omega_H(z^*,z_{k})+E
\end{align*}
since $z^*$ is also a strong solution ($F$ is continuous and monotone) we have, $\ang{F(z^*),z-z^*} \geq 0~\forall z$. Thus we have,
$$\ang{F(z_{k+\frac{1}{2}}),z_{k+\frac{1}{2}}-z^*} \geq \ang{F(z_{k+\frac{1}{2}}),z_{k+\frac{1}{2}}-z^*}-\ang{F(z^*),z_{k+\frac{1}{2}}-z^*} \geq m\omega_H(z^*,z_{k+\frac{1}{2}})$$
where the second inequality follows from strong monotonicity of $F$ with respect to $H$. This gives us,
\begin{align*}
     (m+L)\omega_H(z^*,z_{k+1}) \leq (\ang{F(z_{k+\frac{1}{2}}),z_{k+\frac{1}{2}}-z^*}-m\omega_H(z^*,z_{k+\frac{1}{2}})) + (m+L)\omega_H(z^*,z_{k+1}) \leq L\omega_H(z^*,z_{k})+E
\end{align*}
dividing both sides with $(m+L)$ we obtain,
\begin{align*}
     \omega_H(z^*,z_{k+1}) \leq  \frac{L}{m+L}\omega_H(z^*,z_{k})+\frac{E_k}{m+L}
\end{align*}

which is the statement of the lemma.
\end{proof}

\section{Proofs for Section \ref{sec:thirdorder}}\label{app:sec4}
Before we begin presenting the proofs of Section~\ref{sec:thirdorder}, we present the definition of a $p^{th}$ order model operator and higher-order smoothness.

\subsection{Definitions}
\begin{definition}[Taylor Operator]
We define $\mathcal{T}_p$ as the Taylor approximation of $\Phi$ at $z_b$ centered at $z_a$, 
\begin{equation}\label{eq:tau-def}
    \mathcal{T}_p(z_a,z_b) :=  \sum_{i=0}^p \nabla^i \Phi(z_a)[z_b-z_a]^i
\end{equation}
where $z_a,z_b$ are points in $\cz$.
\end{definition}

\begin{definition}\label{pth}($p^{th}$-Order smoothness)
\begin{equation}\label{assmpt:smooth}\tag{\textsc{a}$_1$}
    \|\nabla_{z} \Phi(z_b)-\nabla_{z}\mathcal{T}_{p-1}(z_b,z_a)\|\leq \frac{L_p}{(p-1)!}\|z_b-z_a\|^{p-1},\forall z_a,z_b \in \cz
\end{equation}
\end{definition}

\textbf{Proof of Lemma \ref{lem:thirdrel}}
We now begin the proof,

\begin{proof}
    We let $d_4(h)=\frac{\|h\|^4}{4}$. From smoothness Eq.~\ref{assmpt:smooth} we have,
\begin{align*}
   \ang{u,\nabla_{z} \Phi(z_b)[u] -\nabla_{z}\mathcal{T}_2 (z_b,z_a)[u]}  &\leq \|\nabla_{z} \Phi(z_b)-\nabla_{z}\mathcal{T}_{2}(z_b,z_a)\|\|u\|^2\\
  &\leq \frac{L_3}{2}\|z_b-z_a\|^{2}\|u\|^2,\forall z_a,z_b \in \cz
\end{align*}
rearranging, we obtain,
\begin{align*}
  0 \leq \ang{u,\nabla_{z} \Phi(z_b)[u]} & \leq \ang{u,\nabla_{z}\mathcal{T}_{2}(z_b,z_a)[u]}+\frac{L_3}{2}\|z_b-z_a\|^{2}\|u\|^2\\
  &= \ang{u,\nabla_{z} \Phi(z_b)[u] +  \nabla_{z}^2 \Phi(z_b)[z_b-z_a,u]}
+\frac{L_3}{2}\|z_b-z_a\|^{2}\|u\|^2,~\forall z_a,z_b,u \in \cz
\end{align*}
Let $z_b = z_a+\tau h$,
\begin{align*}
  0 \leq\ang{u, \nabla_{z} \Phi(z_b)[u]} & \leq  \ang{u,\nabla_{z}\mathcal{T}_{2}(z_b,z_a)[u]}+\frac{L_3}{2}\tau^{2}\|u\|^2\|h\|^2\\
  &= \ang{u,\nabla_{z} \Phi(z_a) +  \tau\nabla_{z}^2 \Phi(z_a)[h,u]}+\frac{L_3\tau^{2}}{2}\|u\|^{2}\|h\|^2,~\forall z_a,z_b,u \in \cz
\end{align*}
since $\Phi$ is monotone.
This gives,
\begin{align}\label{eqn:cubic_l}
  -\frac{1}{\tau}\ang{u,\nabla_{z} \Phi(z_a)[u]}-\frac{L_3\tau}{2}\|u\|^{2}\|h\|^2 \leq \ang{u,\nabla_{z}^2 \Phi(z_a)[h,u]} ~\forall z_a,h,u \in \cz
\end{align}
for $z_b = z_a-\tau h$ we obtain,
\begin{align}\label{eqn:cubic_r}
  \frac{1}{\tau}\ang{u,\nabla_{z} \Phi(z_a)[u]}+\frac{L_3\tau}{2}\|u\|^{2}\|h\|^2 \leq \ang{u,\nabla_{z}^2 \Phi(z_a)[h,u]} ~\forall z_a,h,u \in \cz
\end{align}
Using Eq.~\eqref{eqn:cubic_r} we have,
\begin{align*}
    \ang{u,(1-\frac{1}{\tau})\nabla_{z} \Phi(z_a)[u] +\frac{M}{2}\nabla^2_hd_4[h,u]} & \leq \ang{u,\nabla_{z} \Phi(z_a) + \nabla^2_z \Phi(z_a)[h]+\frac{M}{2}\nabla^2_hd_4(h)\big) u} +\frac{L_3\tau}{2}\|u\|^{2}\|h\|^2 
\end{align*}
Hence we have,
$$\ang{u,(1-\frac{1}{\tau})\nabla_{z} \Phi(z_a)[u] +\frac{M}{2}\nabla^2_hd_4[h,u]} -\frac{L_3\tau}{2}\|h\|^2\|u\|^2 \leq \ang{u,\nabla_{z} \Phi(z_a) + \nabla^2_z \Phi(z_a)[h,u]+\frac{M}{2}\nabla^2_hd_4[h,u]}$$
which gives,
    \begin{equation}\label{eqn:thirdsm}
        (1-\frac{1}{\tau})\nabla_{z} \Phi(z_a)[u] +\frac{M-\tau L_3}{2}u\nabla^2_hd_4(h)[h,u] \preceq \nabla_{z} \Phi(z_a) + \nabla^2_z \Phi(z_a)[h]+\frac{M}{2}\nabla^2_hd_4(h)
    \end{equation}

Thus observing that in Eq.~\eqref{eqn:thirdsm}, LHS is $\nabla_h H(h)$ and RHS is $\nabla_h \Phi(h)$ we have,
$$\ang{u,\nabla_h H(h)[u]}\leq   \ang{u,\nabla_h F(h)[u]}$$
Using the other side Eq.~\eqref{eqn:cubic_l} we get,
$$\ang{u,\nabla_h H(h)[u]} \leq \ang{u,\nabla_h F(h)[u]} \leq   \frac{\tau+1}{\tau-1}\ang{u,\nabla_h H(h)[u]} \forall~u\in \cz$$
Finally we have that $H$ is monotone since it is the linear combination of a linear operator and a monotone operator $\nabla_h d_4(h)$. Using the left inequality in the preceding equation we have that $F$ is also monotone. Thus we obtain,
$$0 \leq \ang{u, \nabla_h H(h)[u]} \leq \ang{u,\nabla_h F(h)[u]} \leq   \frac{\tau+1}{\tau-1}\ang{u,\nabla_h H(h)[u]} \forall~u \in \cz.$$

    Consider the operators $\Psi_1 = F-H$ and $\Psi_2 = (\frac{\tau+1}{\tau-1})H-F$. We know from lemma \ref{lem:jacobrels} that an operator $\Psi$ is monotone if and only if the Jacobian of the operator $\Psi$ satisfies $u^\top \nabla \Psi(z) u \geq 0 ~\forall u \in \mathcal{Z}$ at each point $z \in \mathcal{Z}$. From Lemma \ref{lem:thirdrel} we have that $\Psi_1$ and $\Psi_2$ are monotone which completes the proof.
\end{proof}

\textbf{Proof of Lemma \ref{lem:conrelop}}
\begin{proof}
The higher order derivatives are as follows, $\Phi(z) = (\nabla_x \alpha +Ay,\nabla_y \beta-A^\top x), \nabla \Phi(z)[h] = \begin{bmatrix}
    \nabla^2_x \alpha & -A\\
    A & \nabla^2_y \beta
\end{bmatrix}\begin{bmatrix}
    h_x\\h_y
\end{bmatrix}$ and $\nabla^2 \Phi(z) [h,h] = \nabla_x^3 \alpha(x) [h_x,h_x]+ \nabla_y^3 \beta(y) [h_y,h_y]$
From Lemma \ref{lem:thirdrel} we have that the operator $F = \Phi(z_a)+\nabla \Phi(z_a)^\top h+ \nabla^2 \Phi(z_a) [h,h]+\frac{M}{2}\nabla_h d_4(h)$ is relatively smooth and strongly monotone with respect to $H(h) = \frac{1}{2}(1-\frac{1}{\tau}) \nabla_z^\top \Phi(z_a)h+\frac{M-\tau L_3}{2} \nabla_h d_4(h)$. Consider the operator $H' = H-\frac{(1-\frac{1}{\tau})}{2}Nh$ where $N = \begin{bmatrix}
    0 & -A\\
    A & 0 
\end{bmatrix}$. We have that $u^\top H'u = u^\top H u~\forall u$ since $u^\top M u = 0~\forall u$. Thus we have that $\Psi_1 = F-H'$ and $\Psi_2 = (\frac{\tau+1}{\tau-1})H'-F$ satisfy $u^\top \nabla \Phi_1 u\geq 0~\forall~u$, $u^\top \nabla \Phi_2 u\geq 0~\forall~u$ and are thus monotone. Now we show that $H'$ is a conservative operator. In order to show this, it is sufficient to find a function, $\phi$ such that $\nabla \phi = H'$. We claim that $$\phi(h) = \ang{\Phi(z_a),h}+(\frac{1-\frac{1}{\tau}}{2})(\nabla_z \Phi(z_a)-N)[h,h]$$ is such a function. To show this we observe, $$\nabla_h \phi(h) = \Phi(z_a)+\frac{1-\frac{1}{\tau}}{4}\ang{(\nabla_z \Phi(z_a)-N)+(\nabla_z \Phi(z_a)-N)^\top,h}$$

Since $\nabla_z \Phi(z_a)-N = \begin{bmatrix}
    \nabla_x^2 \alpha&0\\0&\nabla_y^2\beta
\end{bmatrix}$ is a symmetric matrix we have that,
\begin{align*}
    \nabla_h \phi(h) = \Phi(z_a)+\frac{1-\frac{1}{\tau}}{2}(\nabla_z \Phi(z_a)-N)h = H'
\end{align*}
\end{proof}

\subsection{Computing the mirror-free updates}\label{app:updates}
\subsubsection{Computing \textbf{$\textrm{Prox}_H(z_k,z_k)$}}

The update is,
\begin{align*}
    \textrm{Prox}_H(z_k,z_k) &\in \{z':\ang{F(z_k)+L(H(z')-H(z_k)),z'-z}\leq0~\forall~z\}
\end{align*}
$F(h) = \Phi(z_k)+\nabla \Phi(z_k) [h]+\frac{1}{2} \nabla^2 \Phi(z_k)[h,h]+2M\nabla_h d_4(h)$
and, 
$H(h) = \frac{1}{2}(1-\frac{1}{\tau}) \nabla_z\Phi(z_k)[h]+\frac{M-\tau L_3}{2} \nabla_h d_4(h)$.
For the unconstrained, setting we have that it is equivalent to,
\begin{align}
    &\Phi(z_k)+\nabla \Phi(z_k) [z_k]+\frac{1}{2} \nabla^2 \Phi(z_k)[z_k,z_k]+2M\nabla_h d_4(z_k)\\
    &\quad\quad+L(\frac{1}{2}(1-\frac{1}{\tau}) \nabla_z\Phi(z_k)[z'-z_k]+(\frac{M-\tau L_3}{2}) (\nabla_h d_4(z') - \nabla_h d_4(z_k)))=0\nonumber
\end{align}
Rearranging, we obtain
\begin{align*}
    L((1-\frac{1}{\tau}) \nabla_z\Phi(z_k)[z']+(M-\tau L_3) (\nabla_h d_4(z')) &= L((1-\frac{1}{\tau}) \nabla_z\Phi(z_k)[z']+(M-\tau L_3) 4\|z'\|^2z') \\
    &=-(\Phi(z_k)+(1-L(1-\frac{1}{\tau}))\nabla \Phi(z_k) [z_k]\\
    &\quad\quad+\frac{1}{2} \nabla^2 \Phi(z_k)[z_k,z_k]+(2M+L(M-\tau L_3))\nabla_h d_4(z_k))\nonumber 
\end{align*}
this is equivalent to,
\begin{align*}
    \nabla_z\Phi(z_k)[z']+\frac{4(M-\tau L_3) (\|z'\|^2z'))}{L(1-\frac{1}{\tau}) } &= -\frac{U(z_a,z_k)}{L(1-\frac{1}{\tau}) }\nonumber 
\end{align*}
where $U(z_a,z_k) = \Phi(z_k)+(1-L(1-\frac{1}{\tau}))\nabla \Phi(z_k) [z_k]+\frac{1}{2} \nabla^2 \Phi(z_k)[z_k,z_k]+(2M+L(M-\tau L_3))\nabla_h d_4(z_k)$.

Or,
$$(\nabla_z\Phi(z_k)+\frac{4(M-\tau L_3) \|z'\|^2}{L(1-\frac{1}{\tau})})z' = -U(z_a,z_k)$$
setting $\rho = \frac{4(M-\tau L_3) \|z'\|^2}{L(1-\frac{1}{\tau})}$ gives,
$$z' = -(\nabla \Phi(z_k) +\rho \lambda I)^{-1}U(z_a,z_k)$$
where $\lambda = \|z'\|^2$.

Finally, we define $\phi(\lambda) = \psi(\lambda)-\frac{\lambda}{\rho}.$

\subsubsection{Computing \textbf{$\textrm{Prox}_H^{SM}(z_k, z_{k+\frac{1}{2}})$}}

Starting with the definition of the scaled-momentum proximal operator:
\[
\textrm{Prox}_H^{SM}(z_k, z_{k+\frac{1}{2}}) \in \{z': \langle F(z_{k+\frac{1}{2}}) + L(H(z') - H(z_k)) + m(H(z') - H(z_{k+\frac{1}{2}})), z' - z \rangle \leq 0~ \forall z \}.
\]

Using the definitions:
\[
F(h) = \Phi(z_k) + \nabla \Phi(z_k)[h] + \frac{1}{2} \nabla^2 \Phi(z_k)[h, h] + 2M \nabla_h d_4(h),
\]
\[
H(h) = \frac{1}{2}(1 - \frac{1}{\tau}) \nabla_z \Phi(z_k)[h] + \frac{M - \tau L_3}{2} \nabla_h d_4(h),
\]
the unconstrained update is equivalent to solving:
\begin{align*}
    &\Phi(z_k) + \nabla \Phi(z_k)[z_{k+\frac{1}{2}}] + \frac{1}{2} \nabla^2 \Phi(z_k)[z_{k+\frac{1}{2}}, z_{k+\frac{1}{2}}] + 2M \nabla_h d_4(z_{k+\frac{1}{2}}) + \\
    &L \Bigg( \frac{1}{2}(1 - \frac{1}{\tau}) \nabla_z \Phi(z_k)[z' - z_k] + \frac{M - \tau L_3}{2} (\nabla_h d_4(z') - \nabla_h d_4(z_k)) \Bigg) + \\
    &m \Bigg( \frac{1}{2}(1 - \frac{1}{\tau}) \nabla_z \Phi(z_k)[z' - z_{k+\frac{1}{2}}] + \frac{M - \tau L_3}{2} (\nabla_h d_4(z') - \nabla_h d_4(z_{k+\frac{1}{2}})) \Bigg) = 0.
\end{align*}

Rearranging terms gives:
\begin{align*}
    &\frac{L + m}{2} \Big( (1 - \frac{1}{\tau}) \nabla_z \Phi(z_{k})[z'] + (M - \tau L_3) \nabla_h d_4(z') \Big) = \\
    &-\Bigg( \Phi(z_{k}) + \nabla \Phi(z_{k})[z_{k+\frac{1}{2}}] + \frac{1}{2} \nabla^2 \Phi(z_{k})[z_{k+\frac{1}{2}}, z_{k+\frac{1}{2}}] + 2M \nabla_h d_4(z_{k+\frac{1}{2}}) + \\
    &L \Bigg( \frac{1}{2}(1 - \frac{1}{\tau}) \nabla_z \Phi(z_{k})[-z_{k}] + \frac{1}{2}(M - \tau L_3) (-\nabla_h d_4(z_{k})) \Bigg) + \\
    &m \Bigg( \frac{1}{2}(1 - \frac{1}{\tau}) \nabla_z \Phi(z_{k})[-z_{k+\frac{1}{2}}] + \frac{1}{2}(M - \tau L_3) (-\nabla_h d_4(z_{k+\frac{1}{2}})) \Bigg) \Bigg).
\end{align*}

Let us simplify the right-hand side. Grouping terms and factoring out constants:
\begin{align*}
    &\frac{L + m}{2} \Big( (1 - \frac{1}{\tau}) \nabla_z \Phi(z_{k})[z'] + (M - \tau L_3) \nabla_h d_4(z') \Big) = \\
    &-\Bigg( \Phi(z_{k}) + (1 - \frac{L + m}{2}(1 - \frac{1}{\tau})) \nabla \Phi(z_{k})[z_{k+\frac{1}{2}}] + \frac{1}{2} \nabla^2 \Phi(z_{k})[z_{k+\frac{1}{2}}, z_{k+\frac{1}{2}}] + \\
    &\Big( 2M + \frac{m}{2}(M - \tau L_3) \Big) \nabla_h d_4(z_{k+\frac{1}{2}})-\frac{L}{2}(M - \tau L_3) \nabla_h d_4(z_{k}) \Bigg).
\end{align*}

Normalizing by \(\frac{L + m}{2}(1 - \frac{1}{\tau})\), we write:
\[
\nabla_z \Phi(z_{k})[z'] + \frac{(M - \tau L_3)}{(1 - \frac{1}{\tau})} \nabla_h d_4(z') = -\frac{2 U(z_{k}, z_{k+\frac{1}{2}})}{(L + m)(1 - \frac{1}{\tau})},
\]
where
\begin{align*}
    U(z_{k}, z_{k+\frac{1}{2}}) &= \Phi(z_{k}) + (1 - \frac{L + m}{2}(1 - \frac{1}{\tau})) \nabla \Phi(z_{k})[z_{k+\frac{1}{2}}] + \frac{1}{2} \nabla^2 \Phi(z_{k})[z_{k+\frac{1}{2}}, z_{k+\frac{1}{2}}]\\
    &\quad\quad+\Big( 2M + \frac{m}{2}(M - \tau L_3) \Big) \nabla_h d_4(z_{k+\frac{1}{2}})-\frac{L}{2}(M - \tau L_3) \nabla_h d_4(z_{k})  
\end{align*}
Setting \(\rho = \frac{4(M - \tau L_3) \|z'\|^2}{1-\frac{1}{\tau}}\), this simplifies to:
\[
(\nabla_z \Phi(z_{k}) + \lambda I) z' = -U(z_{k}, z_{k+\frac{1}{2}}),
\]
where $\lambda = \rho \|z'\|^2$ or equivalently:
\[
z' = -(\nabla_z \Phi(z_{k}) +  \lambda I)^{-1} U(z_{k}, z_{k+\frac{1}{2}}).
\]
Finally, we define \(\phi(\lambda) = \psi(\lambda) - \frac{\lambda}{\rho}\).
To perform each of the updates, we must find the roots of equation $\phi(\lambda)=\psi(\lambda) - \frac{\lambda}{\rho}$. We note that the equation is identical to equation (4.11) in \cite{lin2022explicit}. The difference lies in the definition of $\lambda$ where we have it to be $\|z'\|^2$ instead of $\|z'\|$. However, we note that this change preserves the same structure, that is, $\phi'(\lambda)$ is non-positive, $\phi''(\lambda)$ is non-negative and $\phi(\lambda) \geq 0$ (strictly greater if $U\neq 0$). It then follows that this equation is solved super-linearly.

\textbf{Proof of Lemma \ref{lem:cgo}.}
\begin{proof}
    Consider the operator $\Delta(h) = \Phi_\alpha(h)-\Phi_0(h) = \frac{\alpha}{\eta}(\nabla_{xy} f(z_a) h_y, \nabla_{yx} f(z_a) h_x)$. For any two points $h_1 = (h_{1,x},h_{1,y})$ and $h_2 = (h_{2,x},h_{2,y})$, we have,
\begin{align*}
    \ang{\Delta(h_2)-\Delta(h_1),h_2-h_1} &= \ang{(\nabla_{xy} f(z_a) h_{2,y}, \nabla_{yx} f(z_a) h_{2,x})-(\nabla_{xy} f(z_a) h_{1,y}, \nabla_{yx} f(z_a) h_{1,x}),h_2-h_1}\\
    &=(h_{2,x}-h_{1,x})^\top \nabla_{xy} f(z_a) (h_{2,y}-h_{1,y})\\
    &\quad\quad-(h_{2,y}-h_{1,y})^\top \nabla_{yx} f(z_a) (h_{2,x}-h_{1,x})\\
    &\quad\quad+(h_{2,x}-h_{1,x})^\top \nabla_{xy} f(z_a) (h_{1,y}-h_{2,y})\\
    &\quad\quad-(h_{2,y}-h_{1,y})^\top \nabla_{yx} f(z_a) (h_{2,x}-h_{1,x})\\
    &=0
\end{align*}

    Thus we have,
    $$\ang{\Phi_0(h_2)-\Phi_0(h_1),h_2-h_1} \geq \ang{\Phi_\alpha(h_2)-\Phi_\alpha(h_1),h_2-h_1} \geq \ang{\Phi_0(h_2)-\Phi_0(h_1),h_2-h_1}$$
    and $\Phi_\alpha$ is 1-relatively smooth and 1-strongly monotone with respect to $\Phi_0$. Furthermore, since the operator $I(h) = h$ is 1-strongly monotone, we have that $\Phi_0$ which is a linear combination of constant operator and $I$ is 1-strongly monotone. 
\end{proof}
    
\section{Proofs for Section \ref{sec:exanples}}\label{app:sec5}

\textbf{Proof of relative smoothness of operators in example \ref{eg:smooth}.}
\begin{proof}
    Observe that the function $f(x)$ is the same as that in example 2.1 in \cite{lu2018relatively}. It thus similarly follows that
    \begin{align}\label{eqn:lunes}
        L \nabla^2 h(x) \geq \nabla^2 f(x) \geq m \nabla^2 h(x)
    \end{align}
    Now we have, $\frac{\nabla F(x,y)+\nabla F(x,y)^\top}{2} = \begin{bmatrix} \nabla^2 f(x) & 0\\ 0 & \nabla^2f(y)\end{bmatrix}$ and, $\frac{\nabla H(x,y)+\nabla H(x,y)^\top}{2} = \begin{bmatrix} \nabla^2 h(x) & 0\\ 0 & \nabla^2h(y)\end{bmatrix}$.

This gives,
$$ \frac{\nabla F(x,y)+\nabla F(x,y)^\top}{2} - m\frac{\nabla H(x,y)+\nabla H(x,y)^\top}{2} = \begin{bmatrix} \nabla^2 f(x)-m\nabla^2 h(y) & 0\\ 0 & \nabla^2f(y)-m\nabla^2 h(y)\end{bmatrix}$$

finally we have,

$$ \begin{bmatrix} \nabla^2 f(x)-m\nabla^2 h(y) & 0\\ 0 & \nabla^2f(y)-m\nabla^2 h(y)\end{bmatrix} \succeq 0$$

since diagonal blocks are PSD following, Eq.~\eqref{eqn:lunes}. We thus have from \ref{lem:jacobrels} that $F$ is $m$-strongly monotone with respect to $H$. 
\end{proof}

Before we present the proof of the statement in Example \ref{eg:eg2} we present a supporting lemma and its proof.
\begin{lemma}\label{lem:support}
    The points \( z^*, z_K \in \cz \) satisfy
    \begin{align*}
        \ang{F(z_K), z_K - z} 
        &\leq L \left( \omega(z^*, z_K) + \omega(z_K, z^*) \right) \\
        &\quad +\ang{F(z_K), z^* - z} + \forall z \in \cz.
    \end{align*}
\end{lemma}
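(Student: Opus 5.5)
We write $\omega$ for $\omega_H$ and take $z^*$ to be a solution of the VI, so that $\ang{F(z^*),z-z^*}\ge 0$ for all $z\in\cz$. We also use that $F$ is monotone and $L$-relatively smooth with respect to $H$. The target inequality is equivalent to
\[
\ang{F(z_K),z_K-z^*}\le L\big(\omega(z^*,z_K)+\omega(z_K,z^*)\big),
\]
because the extra term $\ang{F(z_K),z^*-z}$ appears on both sides once we split $z_K-z=(z_K-z^*)+(z^*-z)$. So the plan is to bound $\ang{F(z_K),z_K-z^*}$ on its own.

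First, we use the solution property at the point $z=z_K$, which gives $\ang{F(z^*),z_K-z^*}\ge 0$. Subtracting this nonnegative quantity yields
\[
\ang{F(z_K),z_K-z^*}\le \ang{F(z_K)-F(z^*),z_K-z^*}.
\]
Second, relative smoothness (Eq.~\eqref{def:smooth}) bounds the right-hand side by $L\ang{H(z_K)-H(z^*),z_K-z^*}$.

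Third, we show that this symmetrized inner product equals $\omega(z^*,z_K)+\omega(z_K,z^*)$. From Definition~\ref{def:gbd},
\[
\omega(z_K,z^*)+\omega(z^*,z_K)=\int_{z^*}^{z_K}H^\top dr+\int_{z_K}^{z^*}H^\top dr-\ang{H(z^*),z_K-z^*}-\ang{H(z_K),z^*-z_K}.
\]
The two line integrals run along the same straight segment in opposite directions, so they cancel. The remaining two terms combine to $\ang{H(z_K)-H(z^*),z_K-z^*}$. No conservativeness assumption on $H$ is needed here, since no closed loop through a third point arises. Chaining the three steps and adding back $\ang{F(z_K),z^*-z}$ gives the statement.

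The main obstacle is essentially bookkeeping. The statement has a stray trailing ``$+$'', which I read as no extra term. I also take $\omega$ to mean $\omega_H$ and the constant $L$ to be the relative smoothness constant. The only step needing care is the cancellation of the two line integrals, and it holds because the path $P_{z_az_b}$ is fixed to be the straight segment.
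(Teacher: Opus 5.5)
Your first step runs the wrong way. Since
\[
\ang{F(z_K),z_K-z^*}=\ang{F(z_K)-F(z^*),z_K-z^*}+\ang{F(z^*),z_K-z^*}
\]
and the VI property makes $\ang{F(z^*),z_K-z^*}\ge 0$, subtracting that term gives $\ang{F(z_K)-F(z^*),z_K-z^*}\le \ang{F(z_K),z_K-z^*}$. This is a \emph{lower} bound on the quantity you need to bound from above. Your steps 2 and 3 are correct, including the cancellation of the two opposite straight-line integrals, so $\omega_H(z^*,z_K)+\omega_H(z_K,z^*)=\ang{H(z_K)-H(z^*),z_K-z^*}$. But they cannot be chained to step 1. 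Nor can the gap be patched in general, because the statement fails for constrained problems. On $\cz=[0,\infty)$ take $F(z)=z+1$, $H(z)=z$ and $L=1$. Then $F$ is monotone and $1$-relatively smooth with respect to $H$, $z^*=0$ is the VI solution, and $\omega_H(a,b)=(a-b)^2/2$. With $z=0$ the claim reads $(z_K+1)z_K\le L z_K^2$. This fails at $z_K=1$ (it reads $2\le 1$), and for any admissible $L$ it fails for all sufficiently small $z_K>0$.

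What your steps 2 and 3 do establish is $\ang{F(z_K)-F(z^*),z_K-z^*}\le L\big(\omega_H(z^*,z_K)+\omega_H(z_K,z^*)\big)$. The lemma follows from this exactly when $\ang{F(z^*),z_K-z^*}\le 0$, for instance under the extra hypothesis $F(z^*)=0$. That holds for an unconstrained problem or an interior solution, which is the setting of the update computations in the appendix. With that hypothesis your step 1 becomes an equality, and your three-step argument is complete and shorter than the paper's. The paper reaches the same intermediate inequality by a longer route: the three-point identity for $\omega_F$ on $z,z^*,z_K$, then the bound $\omega_F\le L\omega_H$ of Lemma~\ref{lem:gbdrelopsmooth} applied to $\omega_F(z^*,z_K)$ and $\omega_F(z_K,z^*)$, then cancellation of the loop integral against the straight-line integrals. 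In its last line it discards $\ang{F(z^*),z_K-z^*}$, which is the same sign error. So following the paper will not close the gap. You need to state $F(z^*)=0$, or at least $\ang{F(z^*),z_K-z^*}=0$, as an explicit assumption.
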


\begin{proof}
    By the generalized Bregman three-point property,
    \begin{align*}
            \omega_F(z,z_K)& = \omega_F(z^*,z_K)-\omega_F(z^*,z)+\oint_{P_{zz^*z_K}}F^Tdr+\ang{F(z_k)-F(z),z^*-z}\\
                           & \leq \oint_{P_{zz^*z_K}}F^Tdr-\int_{z}^{z^*}F^Tdr+\ang{F(z_k),z^*-z}+L\omega(z^*,z_K)
    \end{align*}

    Where in the second step we expand $\omega_F(z^*,z)$ and use the fact that $\omega_F \leq L\omega$. Similarly expanding $\omega_F(z,z_K)$ we have,

    \begin{align*}
            \ang{F(z_K),z_K-z} & \leq \oint_{P_{zz^*z_K}}F^Tdr-\int_{z}^{z^*}F^Tdr-\int_{z_K}^zF^Tdr+\ang{F(z_k),z^*-z}+L\omega(z^*,z_K)\\
                               & = \int_{z^*}^{z_K}F^Tdr+\ang{F(z_K),z^*-z}+L\omega(z^*,z_K)\\
                               & = \ang{F(z^*),z_K-z^*}+\ang{F(z_K),z^*-z}+L(\omega(z^*,z_K)+\omega_H(z_K,z^*))\\
                               & \leq \ang{F(z_K),z^*-z}+L(\omega(z^*,z_K)+\omega_H(z_K,z^*))
    \end{align*}    

    rearranging, we obtain the statement of the lemma.
\end{proof}

\textbf{Proof of statement in Example \ref{eg:eg2}}

\begin{proof}

The operator $\Phi$ and its Jacobian are, $\Phi(z) = (4\|x\|^2x+Ay,4\|y\|^2y-A^\top x)$, $\nabla \Phi(z) = \begin{bmatrix}
    8xx^\top +4\|x\|^2I & A^\top\\
    -A & 8yy^\top+4\|y\|^2I
\end{bmatrix}$. Define the quantity $G = \max_{z\in \mathcal{Z}} \|F(z)\|$. By assumption of relative smoothness of operator $F$ with respect to an operator $H$, we have
\begin{align*}
L(\omega_H(z_b,z_c)+\omega_H(z_a,z_b))&\geq \omega_F(z_b,z_c)+\omega_F(z_a,z_b)\\
    &= \int_{z_c}^{z_b} F(r)^\top dr-\ang{F(z_c),z_b-z_c}+\int_{z_b}^{z_a} F(r)^\top dr-\ang{F(z_b),z_a-z_b}\\
    &= \omega_F(z_a,z_c)+\oint_{P_{z_az_bz_c}} F(r)^\top dr+\ang{F(z_c),z_a-z_c}\\
    &\quad\quad-\ang{F(z_c),z_b-z_c}-\ang{F(z_b),z_a-z_b}\\
    &= \omega_F(z_a,z_c)+\oint_{P_{z_az_bz_c}} F(r)^\top dr+\ang{F(z_c)-F(z_b),z_a-z_b}\\
    & \geq \oint_{P_{z_az_bz_c}} F(r)^\top dr+\ang{F(z_c)-F(z_b),z_a-z_b}
\end{align*}
where the final inequality follows from the monotonicity of the operator $F$ and Lemma \ref{lem:posgbd}.
\end{proof}

Let us decompose $\nabla F(z)$ as,

$$\nabla F(z) = \begin{bmatrix}
    8xx^\top +4\|x\|^2I & A^\top\\
    -A & 8yy^\top+4\|y\|^2I
\end{bmatrix} = \begin{bmatrix}
    8xx^\top +4\|x\|^2I & 0\\
    0 & 8yy^\top+4\|y\|^2I
\end{bmatrix}+N$$

where $N = \begin{bmatrix}
    0&A^\top\\
    -A & 0
\end{bmatrix}$. Then we have, $h^\top N h = 0~\forall~h$ since $N$ is anti-symmetric. Thus,

$$h^\top \nabla F(z) h = 8 (h_x^\top x)^2+8(h_y^\top y)^2+4\|x\|^2\|h_x\|^2+4\|y\|^2 \|h_y\|^2 $$

$$h^\top \nabla F(z) h \geq \|h\|^2 \min \{\|x\|^2,\|y\|^2\}$$

Furthermore the operator $\nabla d_4(h)$ is monotone since $d_4(h)$ is a convex function. Thus for any $z_a = (x_a,y_a)$ with $\|x_a\| \geq \kappa(z_a)$ and $\|y\|\geq \kappa(z_a)$ we have,

\begin{align}
    \ang{H(h_1)-H(h_2),h_1-h_2} &\geq \frac{1}{2}(1-\tau) (h_1-h_2)^\top \nabla_z F(z_a) (h_1-h_2)\\
        &\geq \frac{1}{2}(1-\tau) \|h_1-h_2\|^2 \min \{\|x_a\|^2,\|y_a\|^2\} \\\nonumber
        &\geq \kappa(z_a) \|h_1-h_2\|^2
\end{align}

i.e., $H$ is strongly monotone. Which implies,
\begin{align}\label{eqn:sm}
\omega_H(h_1,h_2) \geq \frac{1-\tau}{2}\kappa(z_a) \|h_1-h_2\|^2~\forall h_1,h_2
\end{align}

From lemma \ref{lem:support} we have,
$$\ang{F(z_k),z_k-z} \leq \ang{F(z_k),z^* -z}$$
since $\omega_H(z_a,z_b)\geq 0~\forall~z$.
Noting that $\ang{F(z^*),z-z^*} \geq 0~\forall ~z$ since $F$ is monotone we have,
$$\ang{F(z_k),z_k-z} \leq  \ang{F(z_k)-F(z^*),z^* -z}~\forall~z$$
where the last inequality holds due to Cauchy-Schwarz and the bound on the operator norm, $\|F(z_k)\|,\|F(z^*)\| \leq G$.

Substituting $z=z_k$ in the above gives,

\begin{align*}
    \ang{F(z_k),z_k-z} \leq \ang{F(z_k)-F(z^*),z^*-z_k} &\leq \|F(z_k)-F(z^*)\|\|z^*-z_k\|\\
    &\leq 2M\|z^*-z_k\|
\end{align*}

Thus Eq.~\eqref{eqn:sm} combined with the observation that $\ang{F(z_K),z_K-z} \leq 2G\|z^*-z_K\|~\forall~z$  gives,

$$\ang{F(z_K),z_K-z} \leq 2G \sqrt{\frac{\omega_H(z_k,z^*)}{\frac{1-\tau}{2}\kappa(z_a)}}.$$

Finally, from Corollary \ref{cor:final} we have,
$$\ang{F(z_K),z_K-z} \leq 2G\sqrt{\frac{\omega_H(z_k,z^*)}{\kappa(z_a)}} \leq \frac{2G}{\sqrt{\frac{1-\tau}{2}\kappa(z_a)}} \sqrt{(\frac{L}{m+L})^K\omega(z^*,z_{0})+\sum_{k=1}^k\frac{E_k}{(m+L)^k} }$$

where $E_k = L\oint_{P_{z_kz_{k+1}z_{k+\frac{1}{2}}}} F^T(r) dr$. This follows from the fact that $H$ is conservative since the function in example \ref{eg:eg2} is an instance of the functions described in lemma \ref{lem:conrelop}.

\textbf{Proof of Theorem \ref{lem:antilip}}
\begin{proof}
   From the three-point property of operator $F$ we have 
   $$\ang{F(z_c)-F(z_b),z_a-z_b} = \omega_F(z_a,z_b)+\omega_F(z_b,z_c)-\omega_F(z_a,z_c)-\oint_{P_{abc}}F^\top dr$$

    relative strong monotoncity implies,

    $$\ang{F(z_c)-F(z_b),z_a-z_b} \geq m\omega_H(z_a,z_b)+m\omega_H(z_b,z_c)-\omega_F(z_a,z_c)+\oint_{P_{abc}}F^\top dr$$
    
   if the operator is also relative-Lipschitz with respect to $H$ we have,

   $$L(\omega_H(z_a,z_b)+\omega_H(z_b,z_c))\geq \ang{F(z_c)-F(z_b),z_a-z_b}$$

   combining the previous two relations, we obtain the statement of theorem.
\end{proof}

\textbf{Proof of Corollary \ref{cor:norelip1}}

We provide the proofs for Examples \ref{eg:smooth} and \ref{eg:eg2}.

\textbf{Proof for Example \ref{eg:smooth}.}
\begin{proof}
    Consider a simplified two-dimensional version of the problem with $A=C=b=d=0$ the variables $E,B$ are scalars and the three points $z_a = (\theta,\theta),z_b = (0,0),z_c = (0,\theta)$. For this we have the operator $F = (\|Ex\|^2E^\top Ex+B y,-(\|Ey\|^2E^\top Ey+B^\top x)$ is relatively smooth with respect to $H(x,y) =  (\|x\|^2x,\|y\|^2y) =  (\nabla_x h_s(x,y),-\nabla_y h_s(x,y)) = (\nabla_x h(x),\nabla_y(h_y))$ where $h_s(x,y)=h(x)-h(y)$ and $h(x) = \frac{x^4}{4}$
    
    We have $\omega_H(z_a,z_b) = 2h(\theta)-2h(0)-\ang{H(0),z_a-z_b} = 2h(\theta) = 2\frac{\theta^4}{4}$, $\omega_H(z_b,z_c) = 2h(0)-(h(0)+h(\theta))+\ang{H(z_c),z_c} = -(\frac{\theta^4}{4})+\theta^4 = \frac{3}{4}(\theta^4)$ thus, $\omega_H(z_a,z_b)+\omega_H(z_b,z_c) = 5(\frac{\theta^4}{4})$.

    Let $F_{NC} = (y,-x)$ and $F_C = F - F_{NC}$, then $\oint_{P_{abc}}F^\top dr-\omega_F(z_a,z_c)= \oint_{P_{abc}}F_C^\top dr-\omega_{F_C}(z_a,z_c) +\oint_{P_{abc}}F_{NC}^\top dr-\omega_{F_{NC}}(z_a,z_c)$. We have $\oint_{P_{abc}}F_{C}^\top dr = 0$ since $F_C$ is conservative and, \begin{align*}
        \oint_{P_{abc}}F_{NC}^\top dr-\omega_{F_{NC}}(z_a,z_c) &= \int_{z_a}^{z_b}F_{NC}^\top dr+\int_{z_b}^{z_c}F_{NC}^\top dr+\int_{z_c}^{z_a}F_{NC}^\top dr-\int_{z_c}^{z_a}F_{NC}^\top dr-\ang{F_{NC}(z_c),z_c-z_a} \\
        &= \ang{F_{NC}(z_c),z_a-z_c} = \ang{(B\theta,0),(\theta,0) }= B\theta^2
    \end{align*}

    since $F_{NC}$ is perpendicular to the line passing through $z_a,z_b$ and $z_b,z_c$.

    $\omega_{F_C}(z_a,z_c) = 2f(\theta)-(f(\theta)+f(0))-\ang{F(z_c),z_a-z_c} = f(\theta)-\ang{(0,E^4\theta^4),(-\theta,0)} = f(\theta) = E^4\theta^4$.

    We thus have from Lemma \ref{lem:antilip}, 

    $$L-m \geq \frac{B\theta^2-E^4\theta^4}{\frac{5\theta^4}{4}} = d(\theta)$$

    since $\lim_{\theta \rightarrow 0} \frac{B\theta^2-E^4\theta^4}{\frac{5\theta^4}{4}} = \infty$ we have, $\exists \theta ~s.t.~ d(\theta)\geq L,~\forall L$. Thus there does not exists an $L$ such that the relative Lipschitzness condition Eq.~\ref{def:reloplip} is satisfied.
\end{proof}

\textbf{Proof for Example \ref{eg:eg2}.}
\begin{proof}
        Consider the case when $z_{a}=0$. We have,
        
        $$\phi(h) = \mathcal{T}_2(h)+\frac{M}{2}\nabla_h d_4(h) = F(z_a)+\nabla F(z_a)[h]+ \frac{1}{2}\nabla^2 F(z_a) [h,h]+\frac{M}{2}\nabla_h d_4(h) = (Ay,-A^\top x)$$
        
        and,

        $$H(h) = \frac{1}{2}(1-\tau) \nabla_{z} \Phi(z_a)h+\frac{M-\tau L_3}{2} \nabla_h d_4(h) = \nabla_h d_4(h)$$
        
        $H(x,y) =  (\|x\|^2x,\|y\|^2y) =  (\nabla_x h_s(x,y),-\nabla_y h_s(x,y)) = (\nabla_x h(x),\nabla_y(h_y))$ where $h_s(x,y)=h(x)-h(y)$ and $h(x) = \frac{x^4}{4}$
    
    We have $\omega_H(z_a,z_b) = 2h(\theta)-2h(0)-\ang{H(0),z_a-z_b} = 2h(\theta) = 2\frac{\theta^4}{4}$, $\omega_H(z_b,z_c) = 2h(0)-(h(0)+h(\theta))+\ang{H(z_c),z_c} = -(\frac{\theta^4}{4})+\theta^4 = \frac{3}{4}(\theta^4)$

    thus, $\omega_H(z_a,z_b)+\omega_H(z_b,z_c) = 5(\frac{\theta^4}{4})$.

    Let $F_{NC} = (y,-x)$ and $F_C = F - F_{NC}$, then $\oint_{P_{abc}}F^\top dr-\omega_F(z_a,z_c)= \oint_{P_{abc}}F_C^\top dr-\omega_{F_C}(z_a,z_c) +\oint_{P_{abc}}F_{NC}^\top dr-\omega_{F_{NC}}(z_a,z_c)$. We have $\oint_{P_{abc}}F_{C}^\top dr = 0$ since $F_C$ is conservative and, \begin{align*}
        \oint_{P_{abc}}F_{NC}^\top dr-\omega_{F_{NC}}(z_a,z_c) &= \int_{z_a}^{z_b}F_{NC}^\top dr+\int_{z_b}^{z_c}F_{NC}^\top dr+\int_{z_c}^{z_a}F_{NC}^\top dr-\int_{z_c}^{z_a}F_{NC}^\top dr-\ang{F_{NC}(z_c),z_c-z_a} \\
        &= \ang{F_{NC}(z_c),z_a-z_c} = \ang{(B\theta,0),(\theta,0) }= \theta^2
    \end{align*}

    since $F_{NC}$ is perpendicular to the line passing through $z_a,z_b$ and $z_b,z_c$.

    $\omega_{F_C}(z_a,z_c) = 2f(\theta)-(f(\theta)+f(0))-\ang{F(z_c),z_a-z_c} = f(\theta)-\ang{(0,E^4\theta^4),(-\theta,0)} = f(\theta) = E^4\theta^4$.

    We thus have from Lemma \ref{lem:antilip}, 

    $$L-m \geq \frac{\theta^2-\theta^4}{5\theta^4} = d(\theta)$$

    since $\lim_{\theta \rightarrow 0} \frac{\theta^2-\theta^4}{5\theta^4} = \infty$ we have, $\exists \theta ~s.t.~ d(\theta)\geq L,~\forall L$. Thus there exits no $L$ such that the relative Lipschitzness condition Eq.~\ref{def:reloplip} is satisfied.
\end{proof}

\subsection{Empirical observations}

In this subsection we provide empirical observations for the algorithm MFMP-SM on examples \ref{eg:smooth} and \ref{eg:eg2}.

\subsubsection{Example \ref{eg:smooth}}
\begin{figure}[ht]
    \centering
    \begin{subfigure}[b]{0.45\textwidth}
        \centering
        \includegraphics[width=\textwidth]{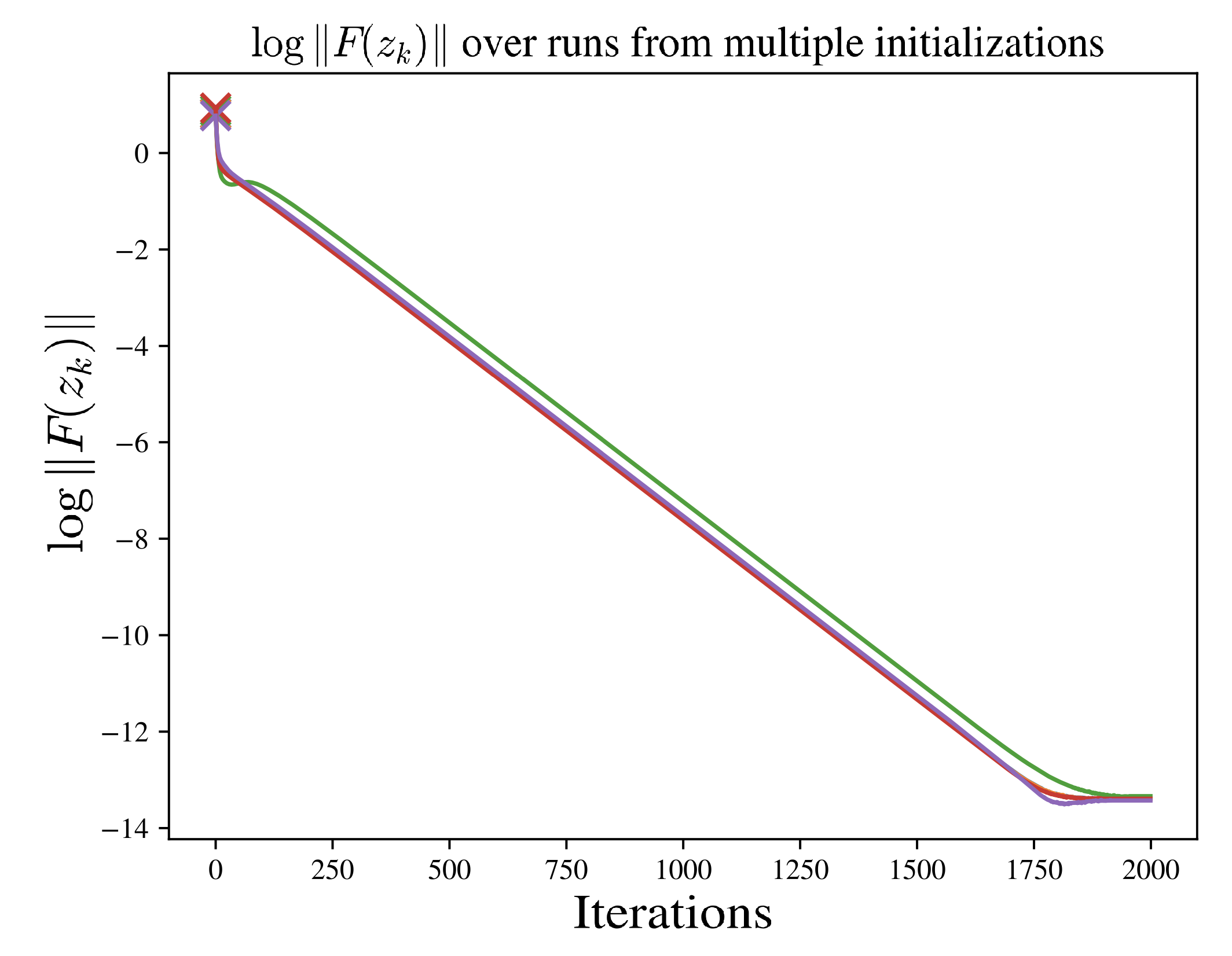}
        \caption{Inseparable case $(B\neq 0)$}
        \label{fig:1a}
    \end{subfigure}
    \hfill
    \begin{subfigure}[b]{0.45\textwidth}
        \centering
        \includegraphics[width=\textwidth]{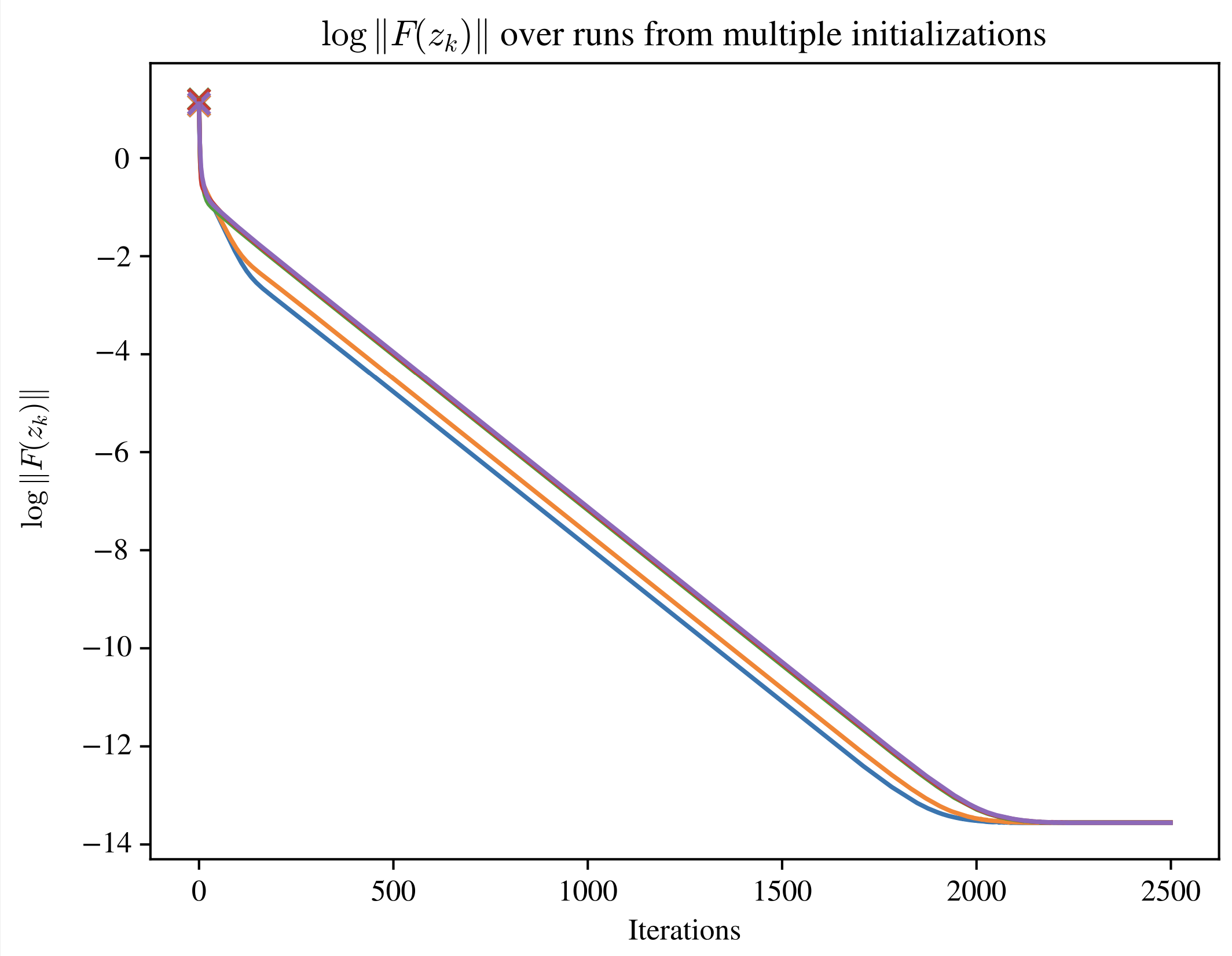}
        \caption{Separable case $(B=0)$}
        \label{fig:1b}
    \end{subfigure}
    
    \caption{MFMP-SM on Example \ref{eg:smooth} with the entries of all parameters of the function $f$ sampled from a standard normal distribution. In both cases the operator norm $\|F(z_k)\|$ converges to zero up to machine error.}
    \label{fig:main}
\end{figure}
\subsubsection{Example \ref{eg:eg2}}
\begin{figure}[ht]
    \centering
    \begin{subfigure}[b]{0.45\textwidth}
        \centering
        \includegraphics[width=\textwidth]{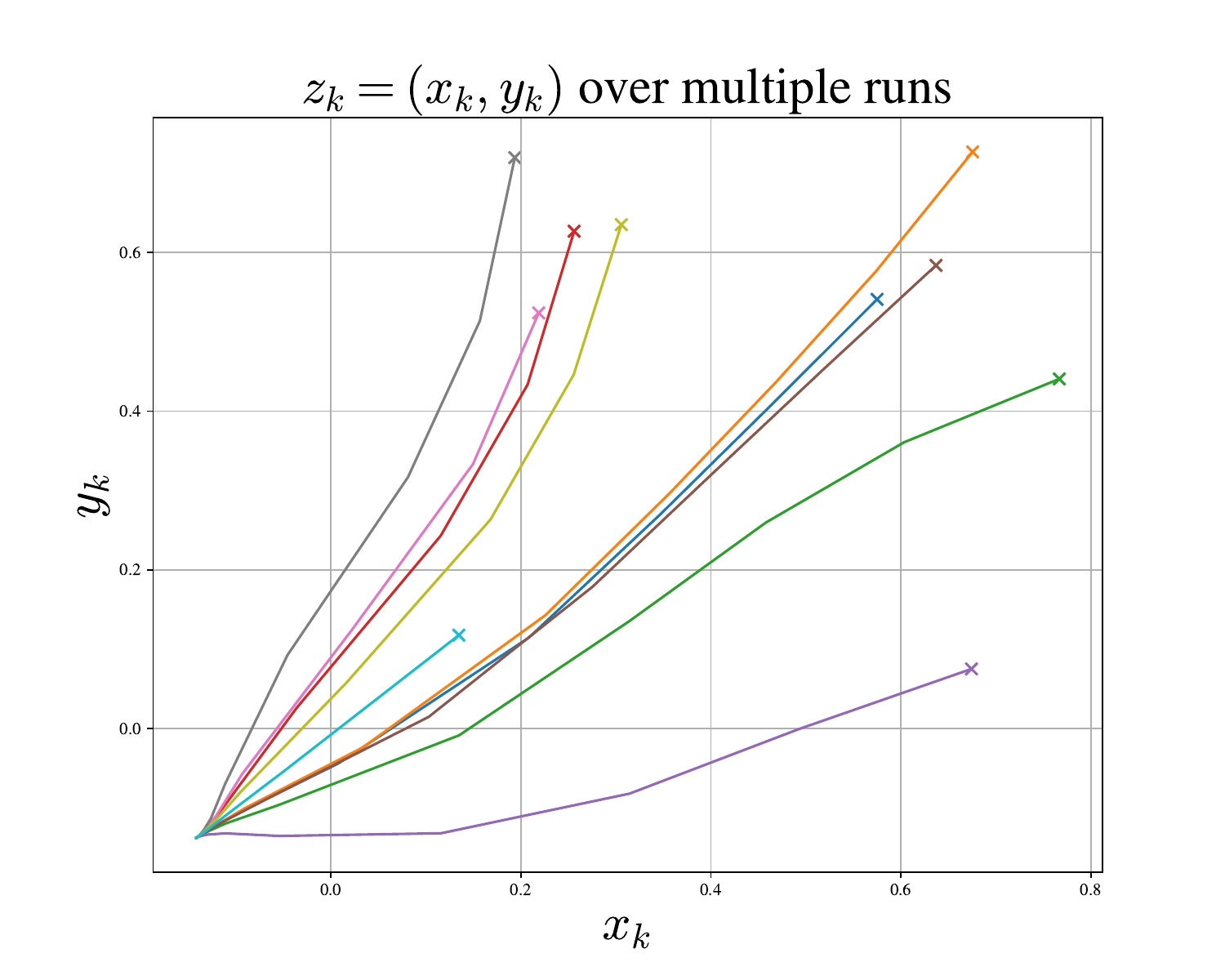}
        \caption{Iterates $z_k$ from 10 different runs. The initializations at the cross signs}
        \label{fig:2a}
    \end{subfigure}
    \hfill
    \begin{subfigure}[b]{0.45\textwidth}
        \centering
        \includegraphics[width=\textwidth]{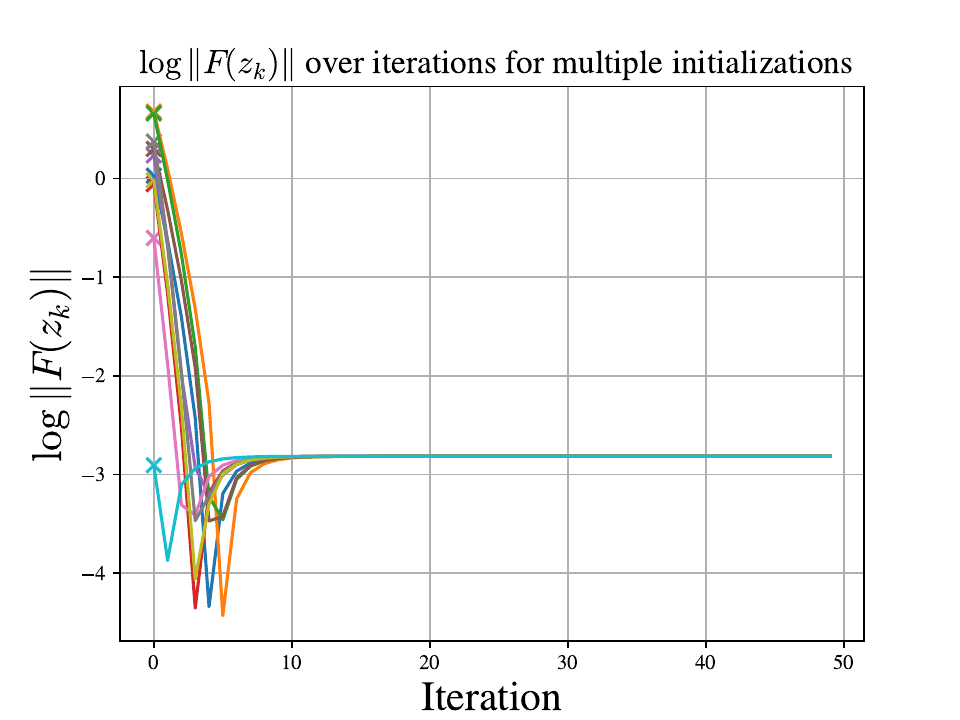}
        \caption{Convergence of $\log \|F(z_k)\|$ to non-zero error}
        \label{fig:2b}
    \end{subfigure}

    \caption{MFMP-SM on Example \ref{eg:eg2} with all parameters of the function $f$ sampled from a standard normal distribution. The algorithm was run on the sub-problem generated at $z_a$ sampled randomly from 10 different initializations.}
\end{figure}
\newpage

\end{document}